\newcommand\reallywidehat[1]{%
\savestack{\tmpbox}{\stretchto{%
  \scaleto{%
    \scalerel*[\widthof{\ensuremath{#1}}]{\kern-.6pt\bigwedge\kern-.6pt}%
    {\rule[-\textheight/2]{1ex}{\textheight}}
  }{\textheight}%
}{0.5ex}}%
\stackon[1pt]{#1}{\tmpbox}%
}
\newtheorem{theorem}{Theorem}[section]
\newtheorem{lemma}[theorem]{Lemma}
\newtheorem{proposition}[theorem]{Proposition}
\newtheorem{corollary}[theorem]{Corollary}
\newtheorem{notation}[theorem]{Notation}
\newtheorem*{theorem*}{Theorem}
\theoremstyle{remark}
\newtheorem{remark}[theorem]{Remark}
\newtheorem{definition}[theorem]{Definition}
\newtheorem{example}[theorem]{Example}
\numberwithin{equation}{section}
\newcommand{\N}{\mathbb{N}}
\newcommand{\C}{\mathbb{C}}
\newcommand{\T}{\mathbb{T}}
\newcommand{\K}{\mathbb{K}}
\newcommand{\B}{\mathbb{B}}
\newcommand{\Hi}{\mathcal{H}}
\newcommand{\End}{\operatorname{End}}
\newcommand{\gae}{\lower 2pt \hbox{$\, \buildrel {\scriptstyle >}\over {\scriptstyle
\sim}\,$}}
\newcommand{\lae}{\lower 2pt \hbox{$\, \buildrel {\scriptstyle <}\over {\scriptstyle
\sim}\,$}}
\newcommand{\MU}[1]{
\setbox0\hbox{$#1$}
\setbox1\hbox{$W$}
\ifdim\wd0>\wd1 #1^{\sim} \else \widetilde{#1} \fi
}
\begin{document}
\title{Naimark's Problem for graph C*-algebras and Leavitt path algebras}

\author{Kulumani M. Rangaswamy}

\author{Mark Tomforde}

\address{Department of Mathematics \\ University of Colorado \\ Colorado Springs, CO 80918-3733 \\USA}
\email{kmranga@gmail.com}
\email{mark.tomforde@gmail.com}

\thanks{This work was supported by a grant from the Simons Foundation (\#527708 to Mark Tomforde)}


\date{\today}

\subjclass[2020]{16S88, 46L55}

\keywords{$C^*$-algebras, graph $C^*$-algebras, Leavitt path algebras, Naimark's problem, representations}

\maketitle

\vspace{-0.6cm}
\begin{center}
\emph{Dedicated to Professor Laszlo Fuchs on the occasion of his 100th birthday.}
\end{center}

\begin{abstract}
We describe how boundary paths in a graph can be used to construct irreducible representations of the associated graph $C^*$-algebra and the associated Leavitt path algebra.  We use this construction to establish two sets of results:  First, we prove that Naimark's Problem has an affirmative answer for graph $C^*$-algebras, we prove that the algebraic analogue of Naimark's Problem has an affirmative answer for Leavitt path algebras, and we give necessary and sufficient conditions on the graphs for the hypotheses of Naimark's Problem to be satisfied.   Second, we characterize when a graph $C^*$-algebra has a countable (i.e., finite or countably infinite) spectrum, and prove that in this case the unitary equivalence classes of irreducible representations are in one-to-one correspondence with the shift-tail equivalence classes of the boundary paths of the graph.
\end{abstract}

\section{Introduction}

Since the fundamental Gelfand-Naimark theorem of $C^*$-algebra theory was first established in 1941, the study of representations has been of paramount importance in understanding $C^*$-algebras.  One of the earliest questions about the representation theory of $C^*$-algebras was asked by Naimark in 1951, and it is now referred to as \emph{Naimark's Problem}.

\smallskip

\noindent \textbf{Naimark's Problem:}  \emph{If a $C^*$-algebra has a unique irreducible $*$-representation, up to unitary equivalence, is that $C^*$-algebra isomorphic to the $*$-algebra of compact operators on some (not necessarily separable) Hilbert space?}

\smallskip

In the immediate years after Naimark stated his problem, progress was made for special classes of $C^*$-algebras.  An affirmative answer to Naimark's problem was obtained for GCR $C^*$-algebras (today more commonly known as Type~I $C^*$-algebras) \cite[Theorem~7.3]{Kap}, and an affirmative answer was also obtained for separable $C^*$-algebras \cite[Theorem~4]{Ros}.  However, the general question posed by the problem remained open for decades, until a major accomplishment in 2004 when Akemann and Weaver used Jensen's $\diamondsuit$~axiom, a combinatorial principle known to be independent of ZFC, to construct a counterexample to Naimark's problem that is generated by $\aleph_1$ elements~\cite{AW}.  This shows that an affirmative answer to Naimark's problem cannot be obtained using ZFC alone.  Whether Naimark's problem itself is independent of ZFC remains unknown.

In light of this counterexample, it is now important to identify classes of (nonseparable) $C^*$-algebras for which Naimark's Problem has an affirmative answer.  This not only allows us to know that these classes are well-behaved in (and to some extent classifiable by) their representation theory, but it also tells us where not to look for a (hypothetical) counterexample to Naimark's problem within ZFC.

In addition, one can ask an algebraic analogue of Naimark's question.

\smallskip

\noindent \textbf{Algebraic Naimark's Problem:}  \emph{If an associative algebra over a field $k$ has a unique irreducible algebraic representation, up to algebraic equivalence, is that algebra isomorphic to the algebra $M_\Lambda (k)$ of matrices with finitely many nonzero entries for some index set $\Lambda$?}

\smallskip

One would not expect this question to have an affirmative answer in general, but as with Naimark's Problem, it is important to identify examples and classes of associative $k$-algebras for which the answer is known.

In this paper, we show that Naimark's Problem has an affirmative answer for the class of graph $C^*$-algebras, and that the Algebraic Naimark's Problem has an affirmative answer for the class of Leavitt patgh algebras.  It is important to note that there are no restrictions on the graphs in our results, so that uncountable graphs (and hence nonseparable graph $C^*$-algebras and uncountably-infinite-dimensional Leavitt path algebras) are allowed.

In prior work \cite{ST}, the second author and Suri proved that Naimark's Problem has an affirmative answer for AF graph $C^*$-algebras and that Naimark's Problem has an affirmative answer for $C^*$-algebras of graphs in which each vertex emits countably many edges.  In this paper, we are finally able to generalize this result to obtain an affirmative answer to Naimark's problem for all graph $C^*$-algebras, as well as obtain an affirmative answer to the Algebraic Naimark's Problem for all Leavitt path algebras.  Moreover, the techniques we develop (see Section~\ref{Irreducible-repns-sec}) are completely different from the techniques used in \cite{ST}, and these new techniques provide novel insights into the representation theory of graph $C^*$-algebras and Leavitt path algebras.  In addition to our application  to Naimark's problem, we are also able to use our results on representations to obtain a characterization (see Theorem~\ref{countable-infinite-irrep-st-equiv-comp-thm} and Theorem~\ref{trichotomy-spectrum-thm}) of when a graph $C^*$-algebra has countable spectrum, and prove that  in this situation each unitary equivalence class of an irreducible representation of the graph $C^*$-algebra corresponds to a shift-tail equivalence class of a boundary path in the graph.

The paper is organized as follows:  In Section~\ref{prelim-sec} we establish definitions and notation and deduce some preliminary results.  In particular, Lemma~\ref{ideal-of-line-point-matrix-and-compacts-lem} provides sufficient conditions for an ideal in a graph $C^*$-algebra (respectively, an ideal in a Leavitt path algebra) to be isomorphic to the $*$-algebra of compact operators on some Hilbert space (respectively, the algebra of matrices with only finitely many nonzero entries for some index set).

In Section~\ref{Irreducible-repns-sec} we construct representations of Leavitt path algebras and graph $C^*$-algebra on vector spaces and Hilbert spaces determined by boundary paths of the graph.  Our presentation builds off the work of Carlsen and Sims \cite{CS} (for graph $C^*$-algebras) and Chen \cite{Che} (for Leavitt path algebras), and we stand firmly on the shoulders of these giants as we develop our results.  For our work, we need to modify and extend the results of Carlsen and Sims \cite{CS} and Chen \cite{Che} to include additional boundary paths (in particular, finite paths ending at singular vertices) in order to capture the analysis of additional representations.  Our definition of boundary paths can be found in Definition~\ref{boundary-paths-def}, and our main result on representations of graph algebras is Theorem~\ref{decomposition-thm}.

In Section~\ref{lemmas-relating-graphs-repns-sec} we establish a number of lemmas showing how the structure of a graph is related to the number of representations of either the graph $C^*$-algebra or the Leavitt path algebra.  We use these lemmas repeatedly throughout the subsequent sections.

In Section~\ref{Naimark-Problem-sec} we establish our main result regarding Naimark's Problem in Theorem~\ref{Naimark-thm}.  In particular, we show Naimark's Problem has an affirmative answer for graph $C^*$-algebras and that the Algebraic Naimark's Problem has an affirmative answer for Leavitt path algebras.  In addition, we establish that a graph $C^*$-algebra $C^*(E)$ has a unique irreducible $*$-representation, up to unitary equivalence, if and only if for any field $k$ the Leavitt path algebra $L_k(E)$ has a unique irreducible representation, up to algebraic equivalence, and we describe necessary and sufficient conditions on the graph $E$ for this uniqueness of irreducible representations to occur.  We end this section with Example~\ref{Cozzens-Kolchin-ex}, where we provide a class of counterexamples to the Algebraic Naimark's Problem, showing that there are $k$-algebras with a unique irreducible representation, up to algebraic equivalence, that are not isomorphic to $M_\Lambda(k)$ for any indexing set $\Lambda$.

In Section~\ref{countably-many-repns-sec} we generalize the hypothesis of Naimark's Problem and examine graph $C^*$-algebras that have a countable (i.e., finite or countably infinite) number of unitary equivalence classes of irreducible $*$-representations.  In Theorem~\ref{countable-infinite-irrep-st-equiv-comp-thm} we prove that the collection of unitary equivalence classes of irreducible representations of $C^*(E)$ is countable if and only if $E$ has no cycles and a countable number of shift-tail equivalence classes of boundary paths if and only if $C^*(E)$ has an elementary composition series of countable length.  Moreover, in the situation when these three equivalent conditions hold, every irreducible representation is unitarily equivalent to the representation coming from a boundary path as constructed in Theorem~\ref{decomposition-thm}.  This result implies a trichotomy for the spectrum of a graph $C^*$-algebra, which we state in Theorem~\ref{trichotomy-spectrum-thm}, describing possibilities for the cardinality of the spectrum of $C^*(E)$ and its relationship to the shift-tail equivalence classes of boundary paths, based on whether or not $E$ contains a cycle.

\section{Preliminaries} \label{prelim-sec}

For a Hilbert space $\Hi$, we let $\K (\Hi)$ denote the $C^*$-algebra of compact operators on $\Hi$, and we let $\B (\Hi)$ denote the $C^*$-algebra of bounded operators on $\Hi$.  We shall also use the convention that the word \emph{countable} shall mean either countably infinite or finite.

\begin{definition}
A \emph{graph} $E=(E^{0},E^{1},r,s)$ consists of a set of vertices $E^0$, a set of edges $E^1$, and functions $r: E^1 \to E^0$ and $s : E^1 \to E^0$ identifying the range and source of each edge.

It is important to note that we place no restrictions on the graphs in this paper.  In particular, the vertex and edge sets of our graphs are not required to be countable, and for a given graph $E$ the graph $C^*$-algebra $C^*(E)$ need not be separable.

A vertex $v \in E^0$ is called a \emph{sink} if $s^{-1}(v) = \emptyset$ and an \emph{infinite emitter} if $| s^{-1}(v) | = \infty$.  A \emph{singular vertex} is a vertex that is either a sink or an infinite emitter, and we let $E^0_\textnormal{sing}$ denote the set of singular vertices of $E$.  A vertex $v\in E^0$ is called a \emph{regular vertex} if $v$ is not a singular vertex; that is, if $0 < | s^{-1}(v) | < \infty$.  The set of regular vertices is denoted $E^0_\textnormal{reg}$, and we observe that $E^0$ is the disjoint union of $E^0_\textnormal{reg}$ and $E^0_\textnormal{sing}$.
\end{definition}

\begin{definition}
Let $E=(E^{0},E^{1},r,s)$ be a graph. The \emph{graph $C^*$-algebra} $C^*(E)$ is the universal $C^*$-algebra generated by mutually orthogonal projections $\{p_v :v\in E^0 \}$ and partial isometries with mutually
orthogonal ranges $\{s_e :e\in E^1\}$ satisfying
\begin{itemize}
\item[(1)] $s_e^* s_e = P_{r(e)}$  for all $e \in E^1$,
\item[(2)] $s_es_e^* \leq P_{s(e)}$ for all $e \in E^1$, and
\item[(3)] $p_v = \displaystyle \sum_{\{ e \in E^1 : s(e) = v \} } s_es_e^*$ whenever $v \in E^0$ with $0 < | s^{-1}(v) | < \infty$.
\end{itemize}
\end{definition}

\begin{definition}
Let $E=(E^{0},E^{1},r,s)$ be a graph and let $k$ be a field.  The \emph{Leavitt path algebra
} $L_k(E)$ is the universal $k$-algebra generated by a set $\{p_v: v \in E^0 \}$ of pairwise orthogonal idempotents together with a set of elements $\{s_e, s_e^*: e\in E^1 \}$ satisfying the following relations:
\begin{itemize}
\item[(1)] $p_{s(e)}s_e=s_e=s_e s_{r(e)}$ for all $e\in E^1$,
\item[(2)] $p_{r(e)}s_e^*=s_e^*=s_e^*P_{s(e)}$ for all $e\in E^1$,
\item[(3)] $s_e^*s_e=p_{r(e)}$ for all $e \in E^1$ and $s_e^* s_f=0$ for $e,f \in E^1$ with $e\neq f$, and
\item[(4)] $p_v = \displaystyle  \sum_{ \{ e \in  E^1 : s(e) = v \} } s_es_e^*$ whenever $v \in E^0$ with $0 < | s^{-1}(v) | < \infty$.
\end{itemize}
\end{definition}

\begin{remark}
It is common for algebraists working on Leavitt path algebras to simply use $e$, $e^*$, and $v$ in place of what we denote as $s_e$, $s_e^*$, and $p_v$.  However, in this paper we will find it convenient to use the $s_e$, $s_e^*$, and $p_v$ notation to draw parallels between the Leavitt path algebra and graph $C^*$-algebra results.
\end{remark}

Let $E$ be a graph.  A \emph{path} in $E$ (also called a \emph{finite path} in $E$) is defined to be either a vertex $v$ or a finite sequence of edges $\alpha := e_1 \ldots e_n$ with $r(e_i) = s(e_{i+1})$ for $1 \leq i \leq n-1$.  We let $E^*$ denote the set of all paths in $E$ and we call $E^*$ the \emph{finite path space of $E$}.

We extend the range map $r : E^1 \to E^0$ and the source map $s : E^1 \to E^0$ to $E^*$ as follows:  Let $\alpha \in E^*$.  If $\alpha = v$ is a vertex, we define $s(\alpha) := v$ and $r(\alpha) := v$.  If $\alpha = e_1 \ldots e_n$ is a finite sequence of edges, we define $s(\alpha) := s(e_1)$ and $r(\alpha) = r(e_n)$.  We also define the \emph{length} of a vertex to be $0$ and the length of a path $\alpha := e_1 \ldots e_n$ to be $n$.  We write $| \alpha |$ for the length of the path $\alpha$.  In addition, for a path $\alpha := e_1 \ldots e_n$, we define $s_\alpha := s_{e_1} \ldots s_{e_n}$.

A \emph{cycle} is a path $\alpha := e_1 \ldots e_n$ with $s(\alpha) = r(\alpha)$.  We say that a cycle $\alpha := e_1 \ldots e_n$ is a \emph{simple cycle} if $s(e_i) \neq s(e_1)$ for all $2 \leq i \leq n$.  An edge $f \in E^1$ is an \emph{exit} for the cycle $\alpha := e_1 \ldots e_n$ if there exists $i \in \{ 1, \ldots n \}$ such that $s(f) = s(e_i)$ and $f \neq e_i$.

A graph $E$ is \emph{downward directed} if for any $v, w \in E^0$ there exists $u \in E^0$ such that there is a path from $v$ to $u$ and there is a path from $w$ to $u$.

\begin{definition}
Let $E = (E^0, E^1, r, s)$ be a graph.  A subset $H \subseteq E^0$ is \emph{hereditary} if for all $e \in E^1$, $s(e) \in H$ implies $r(e) \in H$.  A hereditary subset $H$ is called \emph{saturated} if whenever $v \in E^0_\textnormal{reg}$ with $r(s^{-1}(v)) \subseteq H$, then $v \in H$.

One can verify that the intersection of any collection of saturated hereditary subsets is saturated hereditary, so for any hereditary subset $H \subseteq E^0$ we may define the \emph{saturation of $H$}, denoted $\overline{H}$ to be the smallest saturated hereditary subset of $E^0$ containing $H$; i.e., 
$$\overline{H} := \bigcap \{ S : \text{$S$ is saturated and hereditary subset of vertices and $H \subseteq S$} \}.$$
\end{definition}

\begin{remark}
Let $E = (E^0, E^1, r, s)$ be a graph.  If $H \subseteq E^0$ is a hereditary subset, we may recursively define $H_0 := H$ and $H_{n+1} := H_n \cup \{ v \in E^0_\textnormal{reg} : s(r^{-1}(v)) \subseteq H_n \}$ for all $n \in \N \cup \{ 0 \}$.  One can verify that $H_0 \subseteq H_1 \subseteq \ldots$ and $\overline{H} = \bigcup_{n=0}^\infty H_n$.  As an immediate consequence of this fact, we note that if $v \in \overline{H}$, then there exists a natural number $n \in \N$ such that all paths of length $n$ or greater with source $v$ have range in $H$.
\end{remark}

\begin{definition} \label{ideals-def}
If $E = (E^0, E^1, r, s)$ is a graph and $H$ is a saturated hereditary subset of $E$, the \emph{breaking vertices} of $H$ are the elements of the set
$$B_H := \{ v \in E^0_\textnormal{sing} : 0 < | s^{-1}(v) \cap r^{-1} (E^0 \setminus H) | < \infty \}.$$
For any $v \in B_H$ we define the \emph{gap projection} to be the element 
$$p_v^H := p_v - \sum_{e \in s^{-1}(v) \cap r^{-1} (E^0 \setminus H) } s_es_e^*.$$
An \emph{admissible pair} $(H,S)$ is a pair consisting of a saturated hereditary subset $H$ and a subset $S \subseteq B_H$.  For any admissible pair $(H,S)$ we let $\mathcal{I}_{(H,S)}$ denote the closed two-sided ideal of $C^*(E)$ generated by $\{ p_v : v \in H \} \cup \{ p_v^H : v \in S \}$, and for a fixed field $k$ we let $I_{(H,S)}$ denote the two-sided ideal of $L_k(E)$ generated by $\{ p_v : v \in H \} \cup \{ p_v^H : v \in S \}$.  Note, in particular, that we are using the italicized notation $\mathcal{I}_{(H,S)}$ to denote a closed two-sided ideal in $C^*(E)$, and we are using the non-italicized notation $I_{(H,S)}$ to denote a two-sided ideal in $L_k(E)$.  Furthermore, observe that when $k = \C$ and we make the identification $L_\C(E) \subseteq C^*(E)$, then $\mathcal{I}_{(H,S)}$ is the closure of $I_{(H,S)}$ in this situation.
\end{definition}

\begin{remark} \label{ideals-quotients-rem}
We use $$\mathbb{T} := \{ z \in \C : |z| = 1 \}$$ to denote the circle of unimodular complex numbers.  The following results are standard:

Let $E = (E^0, E^1, r, s)$ be a graph.  There exists a \emph{gauge action} $\gamma : \T \to \operatorname{Aut} C^*(E)$ with $\gamma_z(p_v) = p_v$ for all $v \in E^0$ and $\gamma_z(s_e) = zs_e$ for all $e \in E^1$.  In addition, for any field $k$ there is a $\mathbb{Z}$-grading on $L_k(E)$ with the $n$th homogeneous component given by 
$$L_k(E)_n := \operatorname{span} \{ s_\alpha s_\beta^* : \alpha, \beta \in E^* \text{ and } |\alpha|-|\beta| = n \}.$$

The assignment 
$$(H,S) \mapsto \mathcal{I}_{(H,S)}$$
is a bijection from the collection of saturated hereditary pairs of $E$ onto the gauge-invariant closed two-sided ideals of $C^*(E)$, and for any field $k$ the assignment
$$(H,S) \mapsto I_{(H,S)}$$
is a bijection from the collection of saturated hereditary pairs of $E$ onto the graded two-sided ideals of $L_k(E)$.

In addition, for any admissible pair $(H,S)$ we let $E \setminus (H,S)$ denote the graph whose vertex set is $$(E^0 \setminus H) \sqcup \{ w_v : v \in B_H \setminus S \}$$
and whose edge set is $$(E^1 \setminus r^{-1}(H)) \sqcup \{ f_e : e \in E^1 \text{ and } r(e) \in B_H \setminus H \}$$ with $r$ and $s$ extended to the edge set by letting $s(f_e) := s(e) \in E^0 \setminus H$ and $r(f_e) := w_{r(e)}$.

For any admissible pair $(H,S)$ in $E$ we have $C^*(E) / \mathcal{I}_{(H,S)} \cong C^*( E \setminus (H,S))$ and $L_k(E) / I_{(H,S)} \cong L_k( E \setminus (H,S))$.
\end{remark}

\begin{definition}
If $H$ is a hereditary (but not necessarily saturated) subset of $E^0$, for brevity we shall  let $I_H$ denote the two-sided ideal in $L_k(E)$ generated by $\{ p_v : v \in H \}$, and we let $\mathcal{I}_H$ denote the closed two-sided ideal in $C^*(E)$ generated by $\{ p_v : v \in H \}$.

If $H$ is hereditary and $\overline{H}$ is the saturation of $H$, then it is shown in \cite[Lemma~2.4.1]{AAM}  that
$$ I_H = I_{\overline{H}} =  I_{(\overline{H}, \emptyset)} :=  \operatorname{span}_k \{ s_\alpha s_\beta : \alpha, \beta \in E^* \text{ and } r(\alpha) = r(\beta) \in H \},$$
and it is shown in that \cite[Theorem~4.9, Remark~~4.12]{Rae} that
$$ \mathcal{I}_H = \mathcal{I}_{\overline{H}} =  \mathcal{I}_{(\overline{H}, \emptyset)} :=  \overline{\operatorname{span}} \{ s_\alpha s_\beta : \alpha, \beta \in E^* \text{ and } r(\alpha) = r(\beta) \in H \}.$$
Furthermore, note that when $k = \C$ and we make the identification $L_\C(E) \subseteq C^*(E)$, then for any hereditary subset $H$ we have $\mathcal{I}_H$ is the closure of $I_H$.
\end{definition}

\begin{definition}
Let $E = (E^0, E^1, r, s)$ be a graph.  A vertex $v \in E^0$ is called a \emph{bifurcation vertex} if $v$ is the source of two or more edges.  For any vertex $v \in E^0$, we define $T(v) := \{ w \in E^0 : \text{there is a path from $v$ to $w$} \}$.  Note that $T(v)$ is a hereditary subset of $E^0$.  We call a vertex $v \in E^0$ a \emph{line point} if $T(v)$ contains no bifurcating vertices and no vertex in $T(v)$ is the base point of a cycle.  (Observe that any sink is a line point.)  Note that if $v$ is a line point, then $v$ gives rise to a subgraph of $E$ of the form $v \to w_1 \to w_2 \to \cdots$ or of the form $v \to w_1 \to w_2 \to \cdots \to w_n$ for a sink $w_n$.  In particular, every line point is the source of a (finite or infinite) path with the property that every vertex of that path is also a line point.  
\end{definition}

\begin{definition}
Let $A$ be a $*$-algebra over a field $k$.  A collection of elements $\{ e_{i,j} \}_{i,j \in \Lambda}$ is called a set of \emph{matrix units} when for all $i,j,l,m \in \Lambda$ the elements in the collection satisfy
$$ e_{i,j} e_{l,m} := \begin{cases} e_{i,m} & \text{if $j=l$} \\ 0 & \text{if $j \neq l$} \end{cases} \qquad \text{ and } \qquad e_{i,j}^* = e_{j,i}.$$
It is straightforward to show that if one element in a set of matrix units is nonzero, then so is every other element in the set.  Hence we refer to a set of matrix units as a \emph{nonzero set of matrix units} if one (and hence every) element in the set in nonzero.
\end{definition}

\begin{remark} \label{matrix-units-rem}
If $\{ e_{i,j} \}_{i,j \in \Lambda}$ is a collection of nonzero matrix units in a $*$-algebra, the $*$-subalgebra generated by $\{ e_{i,j} \}_{i,j \in \Lambda}$ is isomorphic to the matrix algebra $M_\Lambda (k)$ of finitely-supported $\Lambda \times \Lambda$ matrices with entries in $k$.  If $\{ e_{i,j} \}_{i,j \in \Lambda}$ is a collection of matrix units in a $C^*$-algebra, the $C^*$-subalgebra generated by $\{ e_{i,j} \}_{i,j \in \Lambda}$ is isomorphic to the compact operators $\K(\Hi)$, where $\Hi := \ell^2 (\Lambda)$ is a Hilbert space whose dimension is equal to the cardinality of $\Lambda$.  (A proof of the $\K(\Hi)$ result can be found in \cite[Corollary~A.9 and Remark~A.10, p.103--104]{Rae}, and the $M_\Lambda(k)$ result can be found in \cite[Lemma 2.6.4]{AAM} and obtained by similar methods.)
\end{remark}

\begin{lemma} \label{ideal-of-line-point-matrix-and-compacts-lem}
Let $E = (E^0, E^1, r, s)$ be a graph and let $k$ be a field.  If $v \in E^0$ is a line point, then $H := T(v)$ is a hereditary subset.  Furthermore, if we let
$$ \Lambda := \{ e_1 \ldots e_n \in E^* : r(e_n) \in T(v) \text{ and } s(e_{n}) \notin T(v) \},$$
then $I_H \cong M_\Lambda (k)$ and $\mathcal{I}_H \cong \K( \ell^2(\Lambda))$.
\end{lemma}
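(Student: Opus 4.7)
The overall strategy is to exhibit an explicit set of nonzero matrix units $\{e_{\mu,\nu}\}_{\mu,\nu \in \Lambda}$ inside the relevant ideal ($I_H$ in $L_k(E)$ or $\mathcal{I}_H$ in $C^*(E)$), show that it spans $I_H$ and generates $\mathcal{I}_H$ as a closed $*$-subalgebra, and then apply Remark~\ref{matrix-units-rem} to obtain the isomorphisms $I_H \cong M_\Lambda(k)$ and $\mathcal{I}_H \cong \K(\ell^2(\Lambda))$. Heredity of $H = T(v)$ is immediate: if $w \in T(v)$ and $e \in E^1$ has $s(e) = w$, then extending a path from $v$ to $w$ by $e$ yields a path from $v$ to $r(e)$, so $r(e) \in T(v)$. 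The essential structural consequence of $v$ being a line point is that $T(v)$ is a chain $\{w_0 = v, w_1, w_2, \ldots\}$, either finite and terminating at a sink or infinite; each non-terminal $w_i$ is a regular vertex emitting exactly one edge $f_{i+1}\colon w_i \to w_{i+1}$, so the Cuntz--Krieger relation forces $s_{f_{i+1}} s_{f_{i+1}}^* = p_{w_i}$, and inductively we obtain the \emph{line-extension identity}
\[
s_{f_{i+1} \cdots f_j}\, s_{f_{i+1} \cdots f_j}^* = p_{w_i} \quad \text{for all } i < j \text{ with } w_j \in T(v),
\]
which is the main technical workhorse of the argument.

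For each $\mu \in \Lambda$, let $a(\mu)$ be the index with $r(\mu) = w_{a(\mu)}$, and for $\mu,\nu \in \Lambda$ set $m := \max\{a(\mu),a(\nu)\}$ and define
\[
e_{\mu,\nu} := s_{\mu\, f_{a(\mu)+1} \cdots f_m}\, s_{\nu\, f_{a(\nu)+1} \cdots f_m}^{\, *},
\]
with the convention that no extra edges are appended on the side whose index already equals $m$. The involution $e_{\mu,\nu}^* = e_{\nu,\mu}$ is immediate and $e_{\mu,\mu} = s_\mu s_\mu^*$ is nonzero. To check the multiplicative relation $e_{\mu,\nu}\, e_{\rho,\sigma} = \delta_{\nu,\rho}\, e_{\mu,\sigma}$, I would inspect the central factor $s_{\nu\, f_{a(\nu)+1} \cdots f_m}^{\,*}\, s_{\rho\, f_{a(\rho)+1} \cdots f_{m'}}$. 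When $\nu \neq \rho$, the defining feature of $\Lambda$ (elements of $\Lambda$ cannot leave $T(v)$ once they enter, so the only edge of $\nu$ or $\rho$ with range in $T(v)$ is the last) forces the initial portions of $\nu$ and $\rho$ to disagree before the $f$-extensions begin, so that central factor vanishes; when $\nu = \rho$ the central factor is a composition of $f$-edges times its adjoint, which collapses via the line-extension identity into a projection that is absorbed into the outer factors to produce $e_{\mu,\sigma}$.

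Finally, to see that the matrix units span $I_H$ and densely span $\mathcal{I}_H$, recall that by the explicit formula following Definition~\ref{ideals-def} it suffices to express each $s_\alpha s_\beta^*$ with $r(\alpha) = r(\beta) \in T(v)$ as some $e_{\mu,\nu}$. Heredity of $T(v)$ guarantees a unique factorization $\alpha = \mu \cdot f_{a(\mu)+1} \cdots f_i$ where $\mu \in \Lambda$ is the shortest prefix of $\alpha$ whose range lies in $T(v)$, and similarly for $\beta$; applying the line-extension identity then yields $s_\alpha s_\beta^* = e_{\mu,\nu}$. Remark~\ref{matrix-units-rem} now completes the proof. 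The main obstacle I anticipate is the orthogonality check for $\nu \neq \rho$: one must carefully combine heredity of $T(v)$ with the characterizing property of $\Lambda$ to rule out that the mismatched leading portions of $\nu$ and $\rho$ are absorbed by the appended $f$-edges, and it is precisely this combinatorial interaction that forces the central factor to vanish in the mismatched case.
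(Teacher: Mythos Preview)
Your proposal is correct and follows essentially the same route as the paper's proof: you build the same matrix units (your definition via $m = \max\{a(\mu),a(\nu)\}$ is exactly the paper's two-case definition written uniformly), verify the matrix-unit relations, show that every $s_\alpha s_\beta^*$ with $r(\alpha)=r(\beta)\in T(v)$ collapses to some $e_{\mu,\nu}$ via the line-extension identity, and then invoke Remark~\ref{matrix-units-rem}. The only cosmetic difference is that you isolate and name the identity $s_{f_{i+1}\cdots f_j}s_{f_{i+1}\cdots f_j}^* = p_{w_i}$ explicitly, whereas the paper uses it tacitly inside the computation $s_\rho s_\rho^* = p_{r(\alpha')}$; otherwise the arguments coincide.
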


\begin{proof}
The fact that $T(v)$ is hereditary follows from the definition of a line point.  Furthermore, the elements of $T(v)$ lie on a subgraph of the form
$$
\xymatrix{
v = w_0 \ar[r]^{e_1} & w_1 \ar[r]^{e_2} & w_2 \ar[r]^{e_3} & w_3 \ar[r]^{e_4} & \cdots 
}
$$
which is either an infinite path or a finite path ending at a sink.

To prove the result we shall produce a set of matrix units in $I_H$ and $\mathcal{I}_H$.  To do so, for each $i,j \in \N \cup \{ 0 \}$ with $i \leq j$, we define
$$ \mu (i, j) := e_{i+1} \ldots e_j$$
to be the unique path in $E$ from $w_i$ to $w_j$.  (Note that $\mu(i,i) = w_i$ when $i=j$.)  For each $\alpha, \beta \in \Lambda$ we define
$$e_{\alpha,\beta} := \begin{cases} s_{\alpha \mu(i, j)} s_\beta^*& \text{ if $r(\alpha) = w_i$ and $r(\beta) = w_j$ with $i \leq j$} \\
s_\alpha s_{\beta \mu(j,i)}^* & \text{ if $r(\alpha) = w_i$ and $r(\beta) = w_j$ with $j < i$.}
 \end{cases}$$
 It is straightforward to verify that $\{ e_{\alpha,\beta} : \alpha, \beta \in \Lambda \}$ is a nonzero set of matrix units.
 
We clearly have $\{ e_{\alpha,\beta} : \alpha, \beta \in \Lambda \} \subseteq \{ s_\alpha s_\beta^* : \alpha, \beta \in E^* \text{ and } r(\alpha) = r(\beta) \in T(v) \}$.
To establish the reverse inclusion, suppose $\alpha, \beta \in E^*$ and $r(\alpha) = r(\beta) \in T(v)$.  Then we may write $\alpha = \alpha' \lambda$ and $\beta = \beta' \rho$ with $\alpha', \beta' \in \Lambda$ and with $\lambda$ and $\rho$ paths along vertices in $T(v)$ with $r(\lambda) = r(\rho)$.  Since $\lambda$ and $\rho$ are paths along vertices in $T(v)$ with $r(\lambda) = r(\rho)$, either $\lambda$ extends $\rho$, or $\rho$ extends $\lambda$.  In the first case $\lambda = \mu(i,j) \rho$, so that
$$ s_\alpha s_\beta^* = s_{\alpha' \lambda} s_{\beta' \rho}^* = s_{\alpha' \mu(i,j) \rho} s_{\beta' \rho}^* =  s_{\alpha' \mu(i,j)} s_\rho s_\rho^* s_{\beta'}^* = s_{\alpha' \mu(i,j)} s_{\beta'}^* = e_{\alpha',\beta'}.
$$
In the second case $\rho = \mu(j,i) \lambda$, so that
$$ s_\alpha s_\beta^* = s_{\alpha' \lambda} s_{\beta' \rho}^* = s_{\alpha' \lambda} s_{\beta' \mu(j,i) \lambda}^* =  s_{\alpha'} s_\lambda s_\lambda^* s_{\beta'\mu(j,i)}^* = s_{\alpha'} s_{\beta' \mu(j,i)}^* = e_{\alpha',\beta'}.
$$
Thus 
$$\{ e_{\alpha,\beta} : \alpha, \beta \in \Lambda \} = \{ s_\alpha s_\beta^* : \alpha, \beta \in E^* \text{ and } r(\alpha) = r(\beta) \in T(v) \}.$$
It follows that
$$I_H  = \operatorname{span}_k \{ s_\alpha s_\beta^* : \alpha, \beta \in E^* \text{ and } r(\alpha) = r(\beta) \in H \} =  \operatorname{span}_k \{ e_{\alpha, \beta} : \alpha, \beta \in \Lambda \}$$
and
$$\mathcal{I}_H  = \overline{\operatorname{span}} \{ s_\alpha s_\beta^* : \alpha, \beta \in E^* \text{ and } r(\alpha) = r(\beta) \in H \} =  \overline{\operatorname{span}} \{ e_{\alpha, \beta} : \alpha, \beta \in \Lambda \}.$$
Since $\{ e_{\alpha, \beta} : \alpha, \beta \in \Lambda \}$ is a nonzero set of matrix units, it follows (see Remark~\ref{matrix-units-rem}) that $I_H \cong M_\Lambda (k)$ and $\mathcal{I}_H \cong K(\ell^2 (\Lambda))$.
\end{proof}

\section{Irreducible Representations of Graph Algebras} \label{Irreducible-repns-sec}

In this section we construct representations of Leavitt path algebras and graph $C^*$-algebras on vector spaces and Hilbert spaces determined by paths in the graph.  Similar representations for Leavitt path algebras have been considered by Chen \cite[Section~3]{Che}, and similar representations for graph $C^*$-algebras have been considered by Carlsen and Sims \cite{CS}.  In his analysis, Chen only considered representations of Leavitt path algebras coming from infinite paths or from finite paths ending at sinks, but his techniques were later extended to produce representations coming from finite paths ending at singular vertices in \cite{AR14} and \cite{Ran}.  On the $C^*$-algebra side, Carlsen and Sims have only considered $*$-representations of graph $C^*$-algebras coming from infinite paths.  For our purposes, we will also need $*$-representations of graph $C^*$-algebras coming from infinite finite paths that end at a singular vertex (i.e., a sink or infinite emitter).  We develop these results in this section, guided by the results of \cite{Che}, \cite{AR14}, and \cite{Ran}.  For the benefit of our $C^*$-algebraist readers, as we develop the $*$-representation results for graph $C^*$-algebras we also give a presentation (with proofs) of the representation results for Leavitt path algebras, phrased in a manner similar to analytic results, 
to highlight the parallels between the analytic and algebraic theories.

We mention that our results in Theorem~\ref{decomposition-thm}(a) recover Chen's results in the case that the graph has no infinite emitters.  Likewise, our results in Theorem~\ref{decomposition-thm}(b) recover Carlsen and Sims' results in the case the graph has no sinks and no infinite emitters.

\begin{definition}[Representations and $*$-Representations] $ $
\begin{itemize}
\item[(i)] (\textsc{$k$-algebras}) Let $k$ be a field.  Given a $k$-algebra $A$, a \emph{representation} of $A$ consists of a $k$-vector space $V$ together with a $k$-algebra homomorphism $\rho : A \to \End_k (V)$.  We say the representation $\rho : A \to \End_k (V)$ is \emph{faithful} when $\rho$ is injective.  If $W$ is a $k$-vector subspace of $V$, we say that $W$ is $\rho$-invariant if $\rho(a)(W) \subseteq W$ for all $a \in A$, and we say that $\rho$ is \emph{irreducible} when the only $\rho$-invariant subspaces of $W$ are $\{ 0 \}$ and $W$.  In addition, two representations $\rho_1 : A \to \End (V_1)$ and $\rho_2 : A \to \End (V_2)$ are defined to be \emph{algebraically equivalent} if there exists a $k$-linear isomorphism $T : V_1 \to V_2$ such that 
$$ \rho_1(a) = T^{-1} \circ \rho_2(a) \circ T \quad \text{ for all $a \in A$.}$$
Observe that if $\rho : A \to \End_k (V)$ is a representation of $A$, then $V$ has the structure of an $A$-module by defining $a \cdot x := \rho(a)(x)$ for $a \in A$ and $x \in V$.  Moreover, every $A$-module arises in this way.  In addition, this $A$-module is a faithful module if and only if $\rho$ is a faithful representation, and is simple (i.e., has no submodules other than $\{ 0 \}$ and $V$) if and only if $\rho$ is an irreducible representation.

\item[(ii)] (\textsc{$C^*$-algebras}) Given a $C^*$-algebra $A$, a \emph{$*$-representation} of $A$ consists of a Hilbert space $\Hi$ together with a $*$-homomorphism $\pi : A \to \B (\Hi )$.  We say the $*$-representation $\pi : A \to  \B (\Hi )$ is \emph{faithful} when $\pi$ is injective.  If $\mathcal{K}$ is a closed subspace of $\Hi$, we say that $\mathcal{K}$ is $\pi$-invariant if $\pi(a)(\mathcal{K}) \subseteq \mathcal{K}$ for all $a \in A$, and we say that $\pi$ is \emph{irreducible} when the only closed $\pi$-invariant subspaces of $\Hi$ are $\{ 0 \}$ and $\Hi$.  In addition, two $*$-representations $\pi_1 : A \to \B(\Hi_1)$ and $\pi_2 : A \to \B(\Hi_2)$ are defined to be \emph{unitarily equivalent} if there exists a unitary $U : \Hi_1 \to \Hi_2$ such that 
$$ \pi_1(a) = U^* \circ \pi_2(a) \circ U \quad \text{ for all $a \in A$.}$$
\end{itemize}

\end{definition}

\begin{remark} \label{algebraic-unitary-same-rem}
Note that a $*$-representation of a $C^*$-algebra is also a $\C$-algebra representation, and thus it makes sense to discuss both algebraic equivalence and unitary equivalence of $*$-representations.  Furthermore, given a $C^*$-algebra $A$, one might be inclined to define two $*$-representations $\pi_1 : A \to \B(\Hi_1)$ and $\pi_2 : A \to \B(\Hi_2)$ to be ``equivalent" (in analogy with equivalence of algebra representations) if there exists a bounded (i.e., continuous) isomorphism $T: \Hi_1 \to \Hi_2$ such that $$ \pi_1(a) = T^{-1} \circ \pi_2(a) \circ T \quad \text{ for all $a \in A$.}$$  Such a $T$ is called an ``intertwining isomorphism".  However, it is well-known that two  $*$-representations of a $C^*$-algebra are ``equivalent" via an intertwining isomorphism if and only if they are unitarily equivalent (see \cite[Ch.2, \S 2.2.2, p.32]{Dix} for a proof of this fact).  Thus for representations of $C^*$-algebras, the definition of unitarily equivalent has emerged as the most useful way to characterize the appropriate definition of ``equivalence".  Also note that if two $*$-representations of a $C^*$-algebra are unitarily equivalent, then they are algebraically equivalent.  However, the converse implication does not hold in general since the intertwining isomorphism in an algebraic equivalence may not be bounded.
\end{remark}

For any vertex $v \in E^0$ we let $ E^*(v) := \{ \alpha \in E^* : r(\alpha) = v \}$ denote the set of paths with range $v$.  We say that a path $\alpha \in E^*$ is a \emph{singular path} if $r(\alpha) \in E^0_\textnormal{sing}$, and we let
$$E^*_\textnormal{sing} := \bigcup_{v_0 \in E^0_\textnormal{sing}} E^*(v_0)$$
denote the set of singular paths in $E$.

We define an \emph{infinite path} in $E$ to be a sequence of edges $\omega := e_1 e_2 \ldots$ with $r(e_i) = s(e_{i+1})$ for all $i \in \N$.  (Note that, strictly speaking, an ``infinite path" is not a ``path".)  We define the \emph{infinite path space} $E^\infty$ to be the set of infinite paths in $E$.  We extend the source map $s : E^1 \to E^0$ to $E^\infty$ as follows:  If $\omega := e_1 e_2 \ldots \in E^\infty$, then $s (\omega) := s(e_1)$.  We also define the length of an infinite path to be infinity.

\begin{definition} \label{boundary-paths-def}
The \emph{boundary of $E$} is defined to be the set
$$ \partial E := E^\infty \sqcup E^*_\textnormal{sing}$$
and we call elements of $\partial E$ \emph{boundary paths}.  Note that boundary paths in $\partial E$ are of two mutually exclusive types: infinite paths of $E^\infty$ and singular paths of $E^*_\textnormal{sing}$.
\end{definition}

\begin{definition}[The Shift Map]
For any graph $E$ we define the \emph{shift map} $$\sigma_E : \partial E \to \partial E$$ 
as follows:  Let $\alpha \in \partial E$.  If $\alpha := v$ is a vertex, then $\sigma_E (\alpha) = v$.  If $\alpha := e_1 \ldots e_n$ is a finite path of positive length, then $\sigma_E (\alpha) := e_2 \ldots e_n$.  If $\alpha := e_1 e_2 \ldots$ is an infinite path, then $\sigma_E (\alpha) := e_2 e_3 \ldots$.  For every natural number $n \in \N$ we also let $\sigma_E^n := \sigma_E \circ \ldots \circ \sigma_E$ denote the $n$-fold composition of $\sigma_E$.

Note that $\sigma_E (E^\infty) \subseteq E^\infty$ and $\sigma_E (E^*(v_0)) \subseteq E^*(v_0)$ for all $v_0 \in E^0_\textnormal{sing}$.
\end{definition}

\begin{definition}
If $\alpha, \beta \in  \partial E$, we say $\alpha$ is \emph{shift-tail equivalent} to $\beta$, written $\alpha \sim_{st} \beta$, if there exist $m,n \in \N$ such that $\sigma_E^m (\alpha) = \sigma_E^n (\beta)$.  It is straightforward to verify that shift-tail equivalence is an equivalence relation on $\partial E$.

For $\alpha \in \partial E$, we let $$[\alpha ] := \{ \beta \in \partial E : \beta \sim_{st} \alpha \}$$
denote the shift-tail equivalence class of $\alpha$ in $\partial E$.  In addition, we let $$\widetilde{\partial} E := \{ [\alpha] : \alpha \in \partial E \}$$ denote the collection of shift-tail equivalence classes of paths in $\partial E$.

\end{definition}

\begin{lemma} \label{st-basic-lem}
Let $E$ be a graph, and let $\alpha, \beta \in \partial E$.
\begin{itemize}
\item[(i)] If $\alpha \sim_{st} \beta$, then either $\alpha$ and $\beta$ are both infinite paths or $\alpha$ and $\beta$ are both finite paths.  (In particular, if $\alpha \in E^\infty$, then $[\alpha] \subseteq E^\infty$, and if $\alpha \in E^*_\textnormal{sing}$, then $[\alpha] \subseteq E^*_\textnormal{sing}$.)
\item[(ii)] If $\alpha, \beta \in E^*_\textnormal{sing}$, then $\alpha \sim_{st} \beta$ if and only $r(\alpha) = r(\beta)$.  (In particular, if $\alpha \in E^*_\textnormal{sing}$ and $r(\alpha) = v_0$, then $[ \alpha ] = E^*( v_0)$.)
\item[(iii)] If $\alpha, \beta \in E^\infty$, then $\alpha \sim_{st} \beta$ if and only if $\alpha = \mu \gamma$ and $\beta = \nu \gamma$ for some $\mu, \nu \in E^*$ and some $\gamma \in E^\infty$.
\end{itemize}

\end{lemma}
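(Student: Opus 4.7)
The plan is to unwind definitions carefully in each part, relying on two simple observations: first, that the shift map $\sigma_E$ respects the partition $\partial E = E^\infty \sqcup E^*_\textnormal{sing}$; and second, that shifting a path does not alter its range (where the range of a vertex is itself). No deep machinery is needed; each part follows directly once these facts are isolated.

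For part (i), I would first record that $\sigma_E(E^\infty) \subseteq E^\infty$ (which is explicit in the definition) and that $\sigma_E(E^*_\textnormal{sing}) \subseteq E^*_\textnormal{sing}$, noting that in the finite case the shift eventually produces a vertex $v_0 \in E^0_\textnormal{sing}$, which by our convention is itself regarded as a singular path. Iterating, $\sigma_E^m(\alpha)$ lies in $E^\infty$ if $\alpha \in E^\infty$ and in $E^*_\textnormal{sing}$ if $\alpha \in E^*_\textnormal{sing}$. Since $\sigma_E^m(\alpha) = \sigma_E^n(\beta)$ and the two subsets $E^\infty$ and $E^*_\textnormal{sing}$ are disjoint, $\alpha$ and $\beta$ must be of the same type.

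For part (ii), I would observe that $r(\sigma_E(\gamma)) = r(\gamma)$ for every finite path $\gamma \in E^*$, including the case where $\gamma$ is a vertex. Consequently $r(\sigma_E^m(\alpha)) = r(\alpha)$ for all $m$, and similarly for $\beta$. If $\alpha \sim_{st} \beta$ with both singular and finite, then choosing $m,n$ so that $\sigma_E^m(\alpha) = \sigma_E^n(\beta)$ yields $r(\alpha) = r(\beta)$. Conversely, if $r(\alpha) = r(\beta) = v_0 \in E^0_\textnormal{sing}$, then $\sigma_E^{|\alpha|}(\alpha) = v_0 = \sigma_E^{|\beta|}(\beta)$, giving $\alpha \sim_{st} \beta$. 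The parenthetical remark that $[\alpha] = E^*(v_0)$ then follows from part (i), which confines $[\alpha]$ to $E^*_\textnormal{sing}$, together with the range characterization just proved.

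For part (iii), given $\alpha, \beta \in E^\infty$ with $\sigma_E^m(\alpha) = \sigma_E^n(\beta)$, I set $\gamma := \sigma_E^m(\alpha) \in E^\infty$ and let $\mu$ denote the initial segment $e_1 \ldots e_m$ of $\alpha$ (taking $\mu := s(\alpha)$ as a length-zero path when $m = 0$) and analogously $\nu$ from $\beta$; then $\alpha = \mu\gamma$ and $\beta = \nu\gamma$. For the converse, if $\alpha = \mu\gamma$ and $\beta = \nu\gamma$, then $\sigma_E^{|\mu|}(\alpha) = \gamma = \sigma_E^{|\nu|}(\beta)$, so $\alpha \sim_{st} \beta$. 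The only mild subtlety is handling the degenerate cases where $m = 0$ or $\mu$ is a vertex, which I would dispatch by adopting the convention that concatenating a vertex $v = s(\gamma)$ with $\gamma$ returns $\gamma$ itself; this is the principal bookkeeping nuisance rather than any real obstacle.
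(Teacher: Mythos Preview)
Your proposal is correct and follows essentially the same approach as the paper: both arguments rely on the observations that $\sigma_E$ preserves the partition $\partial E = E^\infty \sqcup E^*_{\textnormal{sing}}$, that shifting a finite path preserves its range and eventually reaches that range vertex, and that shifting an infinite path by $m$ corresponds to stripping an initial segment of length $m$. Your treatment of the degenerate cases ($m=0$, $\mu$ a vertex) is slightly more explicit than the paper's, but the substance is identical.
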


\begin{proof}
(i) Simply observe that for any $n \in \mathbb{N}$ the $n$-fold composition $\sigma_E^n$ takes finite paths to finite paths and infinite paths to infinite paths.  Thus, in order for two boundary paths $\alpha, \beta \in \partial E$ to satisfy $\sigma_E^m (\alpha) = \sigma_E^n (\beta)$ with $m,n \in \N$, we must have that $\alpha$ and $\beta$ are either both infinite paths or both finite paths.  

(ii) If $\alpha \in E^*_\textnormal{sing}$, then for any $n \in \N$ we see that $\sigma_E^n (\alpha)$ has the same range as $\alpha$.  In addition, for $n$ greater than or equal to the length of $\alpha$ we have $\sigma_E^n (\alpha) = r(\alpha)$.  Thus, for two boundary paths $\alpha, \beta \in E^*_\textnormal{sing}$, we have $\sigma_E^m (\alpha) = \sigma_E^n (\beta)$ for some $m,n \in \N$ if and only if $r(\alpha) = r(\beta)$.

(iii)  If $\alpha$ is an infinite path, then $\sigma^n(\alpha) = \gamma$ if any only if there exists a finite path $\mu$ of length $n$ for which $\alpha = \mu \gamma$.  Thus when $\alpha, \beta \in E^\infty$ we have $\sigma_E^m (\alpha) = \sigma_E^n (\beta) = \gamma$ for $m,n \in \N$ if and only if $\alpha = \mu \gamma$ for a finite path $\mu$ (of length $m$) and $\beta = \nu \gamma$ for a finite path $\nu$ (of length $n$).
\end{proof}

\begin{definition}[Vector spaces and Hilbert spaces associated with graphs]
Fix a field $k$ and a graph $E := (E^0, E^1, r, s)$.  We define $$V_{(\partial E,k)} :=  \operatorname{span}_k \partial E$$ to be the vector space over $k$ with basis $\partial E$.  We also define $$\Hi_{\partial E} := \ell^2 ( \partial E)$$ to be the complex Hilbert space with orthonormal basis $\partial E$.  We note that $V_{(\partial E,\C)}$ is a dense subspace of $\Hi_{\partial E}$.

For $\alpha \in \partial E$, we define $$V_{ ([\alpha],k) } := \operatorname{span}_k [\alpha]$$ to be the vector space over $k$ with basis $[\alpha]$ and we define $$\Hi_{ [\alpha] }:= \ell^2 ( [\alpha])$$ to be the complex Hilbert space with orthonormal basis $[\alpha]$.  For each $\alpha \in \partial E$ we see that $V_{ ([\alpha],\C) }$ is a dense subspace of $\Hi_{ [\alpha] }$.  In addition, we observe that $$V_{(\partial E,k)} = \bigoplus_{ [\alpha] \in \widetilde{\partial} E } V_{ ([p], k) } \qquad
\text{ and } \qquad \Hi_{\partial E} = \bigoplus_{ [\alpha] \in \widetilde{\partial} E} \Hi_{ [\alpha] }.$$
Furthermore, if $\alpha \in E^*_\textnormal{sing}$, then Lemma~\ref{st-basic-lem}(b) implies $V_{ ( [\alpha], k )} = \operatorname{span}_k E^*(r(\alpha))$ and $\Hi_{[\alpha]} = \ell^2 ( E^*(r(\alpha)) )$.
\end{definition}

Given a graph $E$, we shall now construct an algebra representation of the Leavitt path algebra $L_k(E)$ on the vector space $V_{(\partial E,k)}$ and a $*$-representation of the graph $C^*$-algebra $C^*(E)$ on the Hilbert space $\Hi_{\partial E}$.

\begin{proposition} \label{representation-exists-prop}
Let $E = (E^0, E^1, r, s)$ be a graph.  
\begin{itemize}
\item[(a)] For any field $k$ there exists an algebra representation
$$\rho_{E,k} : L_k(E) \to \End_k V_{ ( \partial E ,k) }$$ with
$$\rho_{E,k} (s_e) \alpha = \begin{cases} e\alpha & \text{ if $s(\alpha) = r(e)$} \\ 0 & \text{ if $s(\alpha) \neq r(e)$,} \end{cases}
\qquad
\rho_{E,k} (s_e^*) \alpha = \begin{cases} \alpha' & \text{ if $\alpha = e \alpha'$} \\ 0 & \text{ otherwise,} \end{cases}
$$
and
$$\rho_{E,k} (p_v) \alpha = \begin{cases} \alpha & \text{ if $s(\alpha) = v$} \\ 0 & \text{ if $s(\alpha) \neq v$} \end{cases}$$
for all $e \in E^1$, $v \in E^0$, and $\alpha \in \partial E$.
\item[(b)]  There exists a $*$-representation
$$\pi_E : C^*(E) \to \B ( \Hi_{ \partial E})$$ with
$$\pi_E (s_e) \alpha = \begin{cases} e\alpha & \text{ if $s(\alpha) = r(e)$} \\ 0 & \text{ if $s(\alpha) \neq r(e)$,} \end{cases}
\qquad
\pi_E (s_e^*) \alpha = \begin{cases} e\alpha & \text{ if $\alpha = e \alpha'$} \\ 0 & \text{ otherwise,} \end{cases}
$$
and
$$\pi_E (p_v) \alpha = \begin{cases} \alpha & \text{ if $s(\alpha) = v$} \\ 0 & \text{ if $s(\alpha) \neq v$} \end{cases}$$
for all $e \in E^1$, $v \in E^0$, and $\alpha \in \partial E$.
\end{itemize}
\end{proposition}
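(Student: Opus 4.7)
The plan is to invoke the universal properties of $L_k(E)$ and $C^*(E)$. For part~(a), I would define $k$-linear endomorphisms $S_e, T_e, P_v$ of $V_{(\partial E, k)}$ by the displayed formulas on the basis $\partial E$ and extend by linearity, then verify the defining Leavitt path algebra relations and apply universality to obtain $\rho_{E,k}$ with $\rho_{E,k}(s_e) = S_e$, $\rho_{E,k}(s_e^*) = T_e$, and $\rho_{E,k}(p_v) = P_v$. For part~(b), I would define the same operators on the orthonormal basis of $\Hi_{\partial E}$, check each extends to a bounded operator, verify the Cuntz--Krieger relations, and again apply universality.

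For part~(a), the operators are well-defined on basis elements because prepending $e$ to a boundary path $\alpha$ with $s(\alpha) = r(e)$ yields another boundary path of the same type (infinite or singular), and stripping a leading edge again yields a boundary path of the same type. The orthogonality of the idempotents $P_v$ is immediate since each $\alpha \in \partial E$ has a single source. Relations~(1) and~(2) in the definition of $L_k(E)$ are routine bookkeeping of sources and ranges on basis elements, and relation~(3)---that $T_e S_e = P_{r(e)}$ and $T_e S_f = 0$ for $e \neq f$---is a direct calculation.

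The crux is relation~(4): for a regular vertex $v$ and a basis element $\alpha \in \partial E$ with $s(\alpha) = v$, I need $\alpha$ to start with a unique edge $e \in s^{-1}(v)$. This holds because $\alpha$ cannot equal the vertex $v$ itself---such an $\alpha$ would be a finite boundary path of length zero with $r(\alpha) = v$, but by Definition~\ref{boundary-paths-def} every finite boundary path has \emph{singular} range, whereas $v$ is regular. Hence $\alpha$ begins with a unique edge $e$ satisfying $s(e) = v$, and exactly one summand $S_e T_e$ in the (finite) sum $\sum_{s(e) = v} S_e T_e$ fixes $\alpha$ while the others annihilate it; on basis elements $\alpha'$ with $s(\alpha') \neq v$ both sides act as zero. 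This verifies relation~(4), and the universal property of $L_k(E)$ delivers $\rho_{E,k}$.

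For part~(b), I would define the same operators on basis vectors of $\Hi_{\partial E}$. Each $S_e$ maps the orthonormal set $\{\alpha \in \partial E : s(\alpha) = r(e)\}$ bijectively onto the orthonormal set $\{e\alpha : s(\alpha) = r(e)\}$ and annihilates the orthogonal complement, hence extends to a partial isometry of norm at most one; a computation of inner products on basis vectors shows that the extension of $T_e$ is precisely the Hilbert-space adjoint $S_e^*$. Each $P_v$ extends to the orthogonal projection onto $\overline{\operatorname{span}}\{\alpha : s(\alpha) = v\}$. The relations $S_e^* S_e = P_{r(e)}$ and $S_e S_e^* \leq P_{s(e)}$ follow from the same bookkeeping as in part~(a), and the relation $P_v = \sum_{s(e) = v} S_e S_e^*$ for regular $v$ is verified by exactly the argument of the previous paragraph, noting that the sum is finite and convergence is trivial. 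The universal property of $C^*(E)$ then produces the required $*$-representation $\pi_E$. The only genuine subtlety throughout is the observation that a finite boundary path sourced at a regular vertex must have positive length, which is exactly what allows relation~(4) to hold on the representing space.
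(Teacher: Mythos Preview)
Your proposal is correct and follows essentially the same approach as the paper: define the operators on basis elements, verify the Leavitt path algebra relations (respectively, the Cuntz--Krieger relations and boundedness), and invoke the universal property. The paper's proof is in fact terser than yours---it simply asserts that ``straightforward but slightly tedious calculations'' verify the relations---whereas you have spelled out the one nontrivial point, namely that a boundary path sourced at a regular vertex necessarily has positive length, which is precisely what makes relation~(4) hold.
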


\begin{proof}
For (a), observe that the displayed equations determine a set of linear transformations $\{ \rho_{E,k} (s_e), \rho_{E,k} (s_e^*), \rho_{E,k} (p_v) : e \in E^1, v \in E^0 \}$ in $\End_k V_{ ( \partial E ,k) }$.  Some straightforward but slightly tedious calculations allow one to verify that the elements of $\{ \rho_{E,k} (s_e), \rho_{E,k} (s_e^*), \rho_{E,k} (p_v) : e \in E^1, v \in E^0 \}$ satisfy the defining relations for the Leavitt path algebra, and hence by the universal property of $L_k(E)$ there exists a $k$-algebra homomorphism $\rho_{E,k} : L_k(E) \to \End_k V_{ ( \partial E ,k) }$ taking $s_e \mapsto \rho_{E,k} (s_e)$, $s_e^* \mapsto \rho_{E,k} (s_e^*)$, and $p_v \mapsto \rho_{E,k} (p_v)$.

For (b), similar calculations show the displayed equations determine a set of bounded linear operators $\{ \pi_E (s_e), \pi_E (p_v) : e \in E^1, v \in E^0 \}$ in $B ( \Hi_{ \partial E})$.  Straightforward calculations show the elements of $\{ \pi_E (s_e), \pi_E(E) (p_v) : e \in E^1, v \in E^0 \}$ are a Cuntz-Krieger $E$-family, and hence by the universal property of $C^*(E)$ there exists a $*$-homomorphism $\pi_E : C^*(E) \to \B ( \Hi_{ \partial E})$ taking $s_e \mapsto \pi_E (s_e)$ and $p_v \mapsto \pi_E (p_v)$.  Furthermore, one can verify that the adjoint $\pi_E (s_e)^*$ is equal to the operator $\pi_E (s_e^*)$ defined above, and since $\pi$ is a $*$-homomorphism we conclude that $\pi$ takes $s_E^* \mapsto \pi_E (s_e)^* = \pi(s_e^*)$.
\end{proof}

\begin{remark}[Decomposition of a representation as a direct sum of irreducible representations]
Let $A$ be a $k$-algebra and $\rho :A \to \End_k (V)$ a representation of $A$ on the $k$-vector space $V$.  If $W$ is a $\rho$-invariant subspace of $V$ (i.e., $\rho(a)(w) \in W$ for all $a \in A$ and $w \in W$), then for each $a \in A$ the endomorphism $\rho(a) : V \to V$ restricts to an endomorphism $\rho(a) |_W : W \to W$.  Consequently, we obtain a representation $\rho |_W : A \to \End_k (W)$ defined by $\rho  |_W (a) := \rho(a) |_W$.  We call $\rho|_W$ the \emph{subrepresentation} of $\rho$ obtained by restricting to $W$.

In addition, if $A$ is a $k$-algebra, $I$ is an indexing set, and for each $i \in I$ we have a representation $\rho_i :A \to \End_k (V_i)$ on a $k$-vector space $V_i$, we may define the direct sum representation
$ \bigoplus_{i \in I} \rho_i : A \to \End_k \left( \bigoplus_{i \in I} V_i \right)$ by 
$$\left( \bigoplus_{i \in I} \rho_i \right) (a) \left(  (v_i)_{i \in I} \right) := \bigoplus_{i \in I} \rho_i (a) (v_i).$$

Given an algebra representation $\rho : A \to \End_k (V)$ it is desirable (if possible) to decompose $V$ as a direct sum of $\rho$-invariant subspaces; that is, write $V = \bigoplus_{i \in I} W_i$ with each subspace in the collection $\{ W_i : i \in I \}$ $\rho$-invariant.  Then, for each $i \in I$, our representation $\rho$ restricts to a representation $\rho |_{W_i} : A \to \End_k(W_i)$, and furthermore $\rho = \bigoplus_{i \in I} \rho |_{W_i} : A \to \End_k \left( \bigoplus_{i \in I} W_i \right)$.  When such a decomposition exists, one can study $\rho$ in terms of the irreducible subrepresentations $\rho |_{W_i}$.

Similar statements hold for $C^*$-algebras:  Given a $*$-representation $\pi : A \to \B(\Hi)$, we seek (if possible) to decompose $\Hi$ as a (Hilbert space) direct sum $\Hi = \bigoplus_{i \in I} \Hi_i$ with each subspace in the collection $\{ \Hi_i : i \in I \}$ closed and $\pi$-invariant.  Then $\pi = \bigoplus_{i \in I} \pi |_{\Hi_i} : A \to \B ( \bigoplus_{i \in I} \Hi_i )$.
\end{remark}

The following results allow us to decompose the representations of Proposition~\ref{representation-exists-prop} as direct sums of irreducible representations, which we describe in Theorem~\ref{decomposition-thm}.

\begin{lemma} \label{inv-subspace-with-basis-elt-all-lem} 
Let $E$ be a graph, let $k$ be a field, and let $\rho_{E,k} : L_k(E) \to \End_k V_{ ( \partial E ,k) }$ and $\pi_E : C^*(E) \to \B ( \Hi_{ \partial E})$ be as defined in Proposition~\ref{representation-exists-prop}.
\begin{itemize}
\item[(a)] Let $\alpha \in \partial E$.  If $W$ is a $\rho_{E,k}$-invariant subspace of $V_{ ([\alpha],k) } := \operatorname{span}_k [\alpha]$ and there exists $\beta \in W$ for some $\beta \in [\alpha]$, then $W = V_{ ( [\alpha], k ) }$.
\item[(b)] Let $\alpha \in \partial E$. If $\mathcal{K}$ is a closed $\pi_E$-invariant subspace of $\Hi_{  [\alpha] } := \ell^2( [\alpha ] )$ and there exists $\beta \in \mathcal{K}$ for some $\beta \in [\alpha]$, then $\mathcal{K} = \Hi_{  [\alpha] }$.
\end{itemize}
\end{lemma}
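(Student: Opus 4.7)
The plan is to prove both parts in parallel by establishing the following single claim: given $\beta \in [\alpha]$ in the invariant subspace and any $\gamma \in [\alpha]$, there exists an element $a$ of the relevant algebra with $\rho_{E,k}(a)\beta = \gamma$ (respectively $\pi_E(a)\beta = \gamma$). Once this is done, invariance forces every basis element $\gamma \in [\alpha]$ to lie in the invariant subspace, after which each part follows by an elementary observation.

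To produce such an $a$, I would split on the two mutually exclusive cases given by Lemma~\ref{st-basic-lem}(i). If $\beta, \gamma \in E^*_\textnormal{sing}$, then Lemma~\ref{st-basic-lem}(ii) gives $v_0 := r(\beta) = r(\gamma)$, and I would use the formulas of Proposition~\ref{representation-exists-prop} to verify that $a := s_\gamma s_\beta^*$ sends $\beta$ to $\gamma$: successive applications of the $s_{e_i}^*$ strip the edges of $\beta$ down to $v_0$, and then successive applications of the $s_{f_j}$ prepend the edges of $\gamma$ (the degenerate case where $\beta$ or $\gamma$ has length zero is absorbed by the convention $s_v = p_v$ together with $\rho_{E,k}(p_v) v = v$). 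If instead $\beta, \gamma \in E^\infty$, then Lemma~\ref{st-basic-lem}(iii) provides $\mu, \nu \in E^*$ and $\eta \in E^\infty$ with $\beta = \mu \eta$ and $\gamma = \nu \eta$, and the same formulas show that $a := s_\nu s_\mu^*$ sends $\beta$ to $\eta$ and then to $\gamma$.

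With the claim in hand, part (a) is immediate: $\rho_{E,k}$-invariance of $W$ and $\beta \in W$ force $[\alpha] \subseteq W$, whence $V_{([\alpha],k)} = \operatorname{span}_k[\alpha] \subseteq W$, and combined with the hypothesis $W \subseteq V_{([\alpha],k)}$ this gives equality. For part (b), the identical computation yields $[\alpha] \subseteq \mathcal{K}$, hence the complex span of $[\alpha]$ is contained in $\mathcal{K}$; since this span is dense in $\ell^2([\alpha]) = \Hi_{[\alpha]}$ and $\mathcal{K}$ is closed, equality follows. I do not anticipate any real obstacle here; the only mild point of care is the length-zero vertex corner case, which is dispatched by the identity $\rho_{E,k}(p_v)v = v = \pi_E(p_v)v$.
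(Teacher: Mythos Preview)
Your proof is correct and follows essentially the same approach as the paper: both arguments produce an element of the form $s_\mu s_\nu^*$ (with $\beta = \nu\delta$ and $\gamma = \mu\delta$) that sends the basis vector $\beta$ to the basis vector $\gamma$, then conclude by invariance and (for part~(b)) closedness. The only cosmetic difference is that you split explicitly into the singular-path and infinite-path cases via Lemma~\ref{st-basic-lem}, whereas the paper asserts the common-tail decomposition $\gamma = \mu\delta$, $\beta = \nu\delta$ uniformly in one line.
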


\begin{proof}
For (a) choose any $\gamma \in [\alpha]$.  Since $\gamma \sim_{st} \alpha \sim_{st} \beta$, it follows that $\gamma = \mu \delta$ and $\beta = \nu \delta$ for some $\mu, \nu \in E^*$ and $\delta \in \partial E$.  Then, since $W$ is $\rho_{E,k}$-invariant, we have
$$ \gamma = \mu \delta = \rho_{E,k} (s_\mu s_\nu^*) \nu \delta = \rho_{E,k} (s_\mu s_\nu^*) \beta \in W.$$
Hence $[\alpha] \subseteq W$ and since $W$ is a subspace, $V_{ ( [\alpha], k )} := \operatorname{span}_k [\alpha ] \subseteq W$.  Thus $V_{ ( [\alpha], k )} = W$.

For (b) choose any $\gamma \in [\alpha]$.  Since $\gamma \sim_{st} \alpha \sim_{st} \beta$, it follows that $\gamma = \mu \delta$ and $\beta = \nu \delta$ for some $\mu, \nu \in E^*$ and $\delta \in \partial E$.  Then, since $\mathcal{K}$ is $\pi_E$-invariant, we have
$$ \gamma = \mu \delta = \pi_E (s_\mu s_\nu^*) \nu \delta = \pi_E (s_\mu s_\nu^*) \beta \in  \mathcal{K}$$
Hence $[\alpha] \subseteq \mathcal{K}$ and since $\mathcal{K}$ is a closed subspace, $\Hi_{ [\alpha] } := \overline{\operatorname{span}} \, [\alpha ] \subseteq \mathcal{K}$.  Thus $\Hi_{  [\alpha] } = \mathcal{K}$.
\end{proof}

\begin{proposition}  \label{restriction-irreducible-prop} 
Let $E$ be a graph, let $k$ be a field, and let $\rho_{E,k} : L_k(E) \to \End_k V_{ ( \partial E ,k) }$ and $\pi_E : C^*(E) \to \B ( \Hi_{ \partial E})$ be as defined in Proposition~\ref{representation-exists-prop}.
\begin{itemize}
\item[(a)] For any $\alpha \in \partial E$ the subspace $V_{ ([\alpha],k) } := \operatorname{span}_k [\alpha]$ is $\rho_{E, k}$-invariant, and the restriction $\rho_{E, k} |_{ V_{ ([\alpha], k)} } : L_k(E) \to \End_k V_{ ([\alpha],k) }$ is an irreducible algebra representation.
\item[(b)] For any $\alpha \in \partial E$ the subspace $\Hi_{ [\alpha] } := \ell^2( [\alpha ] )$ is $\pi_E$-invariant, and the restriction $\pi_E |_{ \Hi_{ [\alpha] } } : C^*(E) \to \B ( \Hi_{ [\alpha] } )$ is an irreducible $*$-representation.
\end{itemize}
\end{proposition}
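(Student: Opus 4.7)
The plan is to use Lemma~\ref{inv-subspace-with-basis-elt-all-lem} to reduce each irreducibility claim to producing \emph{some} basis path $\beta \in [\alpha]$ inside the putative invariant subspace. Invariance itself is immediate from the formulas in Proposition~\ref{representation-exists-prop}: the generators $s_e$, $s_e^*$, $p_v$ send any $\beta \in [\alpha]$ to $0$, $e\beta$, $\sigma_E(\beta)$, or $\beta$, and each nonzero image is a boundary path shift-tail equivalent to $\beta$ and hence again in $[\alpha]$. Since $\Hi_{[\alpha]}$ is closed (as $\ell^2$ of a portion of the orthonormal basis of $\Hi_{\partial E}$), this gives both the $\rho_{E,k}$-invariance of $V_{([\alpha],k)}$ and the $\pi_E$-invariance of $\Hi_{[\alpha]}$.

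For part (a), let $W \subseteq V_{([\alpha],k)}$ be a nonzero $\rho_{E,k}$-invariant subspace and choose a nonzero $w = \sum_{i=1}^n c_i \beta_i \in W$ with distinct $\beta_i \in [\alpha]$ and nonzero $c_i$. I would split by Lemma~\ref{st-basic-lem}(i). If $[\alpha] \subseteq E^\infty$, pick $N$ large enough that every $\beta_j$ ($j\neq 1$) disagrees with $\beta_1$ within its first $N$ edges (possible since $n$ is finite), and let $\mu$ be the length-$N$ initial segment of $\beta_1$; then $\rho_{E,k}(s_\mu^*)w = c_1\sigma_E^N(\beta_1)$ is a nonzero scalar multiple of a basis element of $[\alpha]$. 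If $[\alpha] \subseteq E^*_{\mathrm{sing}}$, Lemma~\ref{st-basic-lem}(ii) forces $r(\beta_i) = r(\alpha) =: v_0$ for all $i$; choose $\beta_1$ of maximum length $L$. When $L = 0$ every $\beta_i$ equals $v_0$, so $n = 1$ and $w$ is itself a basis multiple; when $L \geq 1$, $\rho_{E,k}(s_{\beta_1}^*)w = c_1 v_0$, because every other $\beta_j$ is either strictly shorter than $\beta_1$ (so cannot begin with $\beta_1$) or of equal length starting with $\beta_1$ (which would force $\beta_j = \beta_1$). Either way $W$ contains a basis element of $[\alpha]$, and Lemma~\ref{inv-subspace-with-basis-elt-all-lem}(a) yields $W = V_{([\alpha],k)}$.

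Part (b) runs in parallel, but a general $\xi = \sum_{\beta \in [\alpha]} c_\beta \beta \in \mathcal{K}$ can have infinitely many nonzero coefficients, so I would lean on the closedness of $\mathcal{K}$ to take norm limits. Pick $\beta_0$ with $c_{\beta_0} \neq 0$. If $[\alpha] \subseteq E^\infty$, let $\mu_N$ be the length-$N$ prefix of $\beta_0$ and consider the decreasing projections $P_N := \pi_E(s_{\mu_N} s_{\mu_N}^*)$; since $\beta_0$ is the unique infinite path with $\mu_N$ as a prefix for every $N$, their strong limit is the rank-one projection onto $\C\beta_0$, so $P_N \xi \to c_{\beta_0} \beta_0$ in norm and $\beta_0 \in \mathcal{K}$. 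If $[\alpha] \subseteq E^*_{\mathrm{sing}}$, first apply $\pi_E(s_{\beta_0}^*)$ (which becomes $\pi_E(p_{v_0})$ when $\beta_0 = v_0$) to obtain $\xi' \in \mathcal{K}$ supported on paths from $v_0$ to $v_0$ with $\langle \xi', v_0 \rangle = c_{\beta_0}$. Then apply the operators $\pi_E(p_{v_0} - \sum_{e \in F} s_e s_e^*)$ for finite $F \subseteq s^{-1}(v_0)$: each such operator annihilates every $\gamma$ whose first edge lies in $F$ and fixes $v_0$. Taking $F$ through an increasing sequence of finite sets that exhausts the countably many first edges appearing in the nonzero support of $\xi'$, an $\ell^2$-tail estimate forces the images to converge in norm to $c_{\beta_0} v_0$; closedness of $\mathcal{K}$ then yields $v_0 \in \mathcal{K}$, and Lemma~\ref{inv-subspace-with-basis-elt-all-lem}(b) finishes.

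The principal obstacle is the last subcase above. When $v_0$ is an infinite emitter lying on a cycle through itself there are infinitely many paths from $v_0$ to $v_0$, so the algebraic ``maximum length'' trick from part (a) cannot isolate $v_0$ by a single element of $L_\C(E)$, and the natural would-be projection $p_{v_0} - \sum_{e \in s^{-1}(v_0)} s_e s_e^*$ is unavailable inside $C^*(E)$ because the Cuntz--Krieger relation is not imposed at an infinite emitter. The remedy is precisely to approximate this operator in the strong operator topology by its finite-$F$ truncations and to invoke closedness of $\mathcal{K}$ to pass to the limit.
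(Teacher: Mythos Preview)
Your proposal is correct and follows essentially the same approach as the paper: both split into the infinite-path and singular-path cases, both isolate a basis vector via initial-segment projections (you use $s_\mu^*$ where the paper uses $s_\mu s_\mu^*$, which is immaterial) in the infinite case, and both handle the singular Hilbert-space case by first applying $s_{\beta_0}^*$ and then approximating with $p_{v_0}-\sum_{e\in F} s_e s_e^*$ over finite $F$. Your phrasing of the analytic step as strong-operator convergence of decreasing projections is a clean repackaging of the paper's explicit $\epsilon$–tail estimate, but the underlying argument is the same.
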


\begin{proof}
We shall first show $V_{ ([\alpha], k)}$ is $\rho_{E, k}$-invariant.  Choose any basis element $\beta \in [\alpha ]$.  Then $\beta \sim_{st} \alpha$.  For any finite paths $\mu, \nu \in E^*$, we have $\rho_{E, k} (s_\mu s_\nu^*) \beta = 0$ unless $\beta = \nu \gamma$ for some $\gamma \in \partial E$ with $s(\gamma) = r(\mu)$, in which case $\rho_{E, k} (s_\mu s_\nu^*) \beta =  \mu \gamma$.  Since
$$ \mu \gamma \sim_{st} \gamma \sim_{st} \beta \sim_{st} \alpha$$
we conclude that in this case $\mu \gamma \in V_{ ([\alpha], k)}$.  Thus $\rho_{E, k} (s_\mu s_\nu^*) \beta \in V_{ ([\alpha], k)}$ for all $\mu, \nu \in E^*$, and by linearity $\rho_{E, k} (a) \beta \in V_{ ([\alpha], k)}$ for all $a \in L_k(E)$.  Applying linearity again, we conclude $\rho_{E, k} (a) ( V_{ ([\alpha], k)} ) \subseteq V_{ ([\alpha], k)}$ for all $a \in L_k(E)$.  Thus $V_{ ([\alpha], k)}$ is $\rho_{E, k}$-invariant.  This establishes the claim of invariance in (a).

A similar argument together with an application of the continuity of $\pi_E$ shows that $\pi_E(a) ( H_{ [\alpha] } ) \subseteq \Hi_{ [\alpha] }$ for all $a \in C^*(E)$.  Thus $\Hi_{ [\alpha] }$ is $\pi_E$-invariant, establishing the claim of invariance in (b).

Next, we shall show the restriction $\rho_{E, k} |_{ V_{ ([\alpha], k)} } $ is irreducible.   Consider two cases:

\medskip

\noindent \textsc{Case I:}  $\alpha \in E^\infty$. \\
Let $W$ be a nonzero $\rho_{E, k}$-invariant subspace of $V_{ ([\alpha], k)}$, and choose a nonzero element $\vec{w} \in W$.  Write
$$\vec{w} = \sum_{i=1}^n z_i \beta_i$$
with the $\beta_i$'s distinct infinite paths in $[\alpha]$ and  $z_i \in k \setminus \{ 0 \}$ for all $1 \leq i \leq n$.  For each $1 \leq i \leq n$, write the infinite path $\beta_i$ as 
$$ \beta_i = \beta_{i,1} \beta_{i,2} \beta_{i,3} \ldots \qquad \text{ with $\beta_{i,j} \in E^1$ for all $j \in \N$}.$$
Since the $\beta_i$'s are distinct infinite paths, we may choose $N \in \N$ such that 
$$\mu := \beta_{1, 1} \beta_{1,2}  \ldots \beta_{1, N}$$ is distinct from $\beta_{i,1} \beta_{i,2} \ldots \beta_{i, N}$ for all $2 \leq i \leq n$.  Thus $\rho_{E, k} (s_\mu s_\mu^*) \beta_i = 0$ for all $2 \leq i \leq n$.  Hence
\begin{align*}
z_1 \beta_1 =  z_1 \rho_{E, k} (s_\mu s_\mu^*) \beta_1  &= \sum_{i=1}^n z_i \rho_{E, k} (s_\mu s_\mu^*) \beta_i \\
&=  \rho_{E, k} (s_\mu s_\mu^*) \left( \sum_{i=1}^n z_i \beta_i \right) =   \rho_{E, k} (s_\mu s_\mu^*) \vec{w} \in W.
\end{align*}
Since $z_1 \neq 0$ and $W$ is a subspace, we conclude $\beta_1 \in W$.  By Lemma~\ref{inv-subspace-with-basis-elt-all-lem}(a) it follows that $W = V_{([\alpha], k)}$.  Hence $\rho_{E, k} |_{ V_{ ([\alpha], k|} } $ is an irreducible algebra representation.

\medskip

\noindent \textsc{Case II:} $\alpha \in E^*_\textnormal{sing}$. \\
In this case $[\alpha]$ consists of finite paths all ending at a particular singular vertex $v_0$.  Let $W$ be a nonzero $\rho_{E, k}$-invariant subspace of $V_{ ([\alpha], k)}$, and choose a nonzero element $\vec{w} \in W$.  Write
$$ \vec{w} = \sum_{i=1}^n z_i \beta_i$$
with the $\beta_i$'s distinct elements of $[\alpha]$ and $z_i \in k \setminus \{ 0 \}$ for all $1 \leq i \leq n$.    Furthermore, after possibly reordering and relabeling the subscripts, assume that $\beta_1$ has maximal length among the $\beta_i$'s; that is, $|\beta_1| \geq | \beta_i |$ for all $1 \leq i \leq n$.  Due to the maximality of $\beta_1$ and the fact the $\beta_i$'s are distinct, no $\beta_i$ with $2 \leq i \leq n$ can extend $\beta_1$, and hence $\rho_{E, k} (s_{\beta_1}^*) \beta_i = 0$ for all $2 \leq i \leq n$.  Thus
\begin{align*}
z_1  v_0 = z_1 \rho_{E, k} (s_{\beta_1}^*) \beta_1 = \sum_{i=1}^n z_i &\rho_{E, k} (s_{\beta_1}^*) \beta_i \\
&= \rho_{E, k} (s_{\beta_1}^*) \left( \sum_{i=1}^n z_i \beta_i \right) = \rho_{E, k} (s_{\beta_1}^*) \vec{w} \in W.
\end{align*}
Since $z_i \neq 0$ and $W$ is a subspace, we conclude $\beta_1 \in W$.  By Lemma~\ref{inv-subspace-with-basis-elt-all-lem}(a) it follows that $W = V_{([\alpha], k)}$.  Hence $\rho_{E, k} |_{ V_{ ([\alpha], k|} } $ is an irreducible algebra representation.

The two cases above show that (a) holds.  We have already shown the $\pi_E$-invariance of $\Hi_{[\alpha]}$ claimed in (b).  Thus, to establish (b), it remains to show the restriction $\pi_E |_{ \Hi_{ [\alpha] } } $ is irreducible.  Consider two cases:

\medskip

\noindent \textsc{Case I:}  $\alpha \in E^\infty$. \\
Let $\mathcal{K}$ be a closed nonzero $\pi_E$-invariant subspace of $\Hi_{ [\alpha] }$, and choose a nonzero element $\vec{k} \in \mathcal{K}$.  Write
$$\vec{k} = \sum_{i=1}^\infty z_i \beta_i$$
with the $\beta_i$'s distinct infinite paths in $[\alpha]$ and  $z_1 \in \C \setminus \{ 0 \}$.

Let $\epsilon > 0$.  Choose $n \in \N$ such that $$\left\| \sum_{i=n+1}^\infty z_i \beta_i \right\| < \epsilon.$$  For each $i \in \N$, write the infinite path $\beta_i$ as 
$$ \beta_i = \beta_{i,1} \beta_{i,2} \beta_{i,3} \ldots \qquad \text{ with $\beta_{i,j} \in E^1$ for all $j \in \N$.}$$
Since the $\beta_i$'s are distinct infinite paths, we may choose $N \in \N$ such that 
$$\mu := \beta_{1, 1} \beta_{1,2}  \ldots \beta_{1, N}$$ is distinct from $\beta_{i,1} \beta_{i,2} \ldots \beta_{i, N}$ for all $2 \leq i \leq n$.  Thus $\pi_E (s_\mu s_\mu^*) \beta_i = 0$ for all $2 \leq i \leq n$.  Hence
\begin{align*}
\pi_E (s_\mu s_\mu^*) \vec{k} &= \pi_E (s_\mu s_\mu^*) \left( \sum_{i=1}^\infty z_i \beta_i  \right) \\ 
&= \pi_E (s_\mu s_\mu^*) \left( \sum_{i=1}^n z_i \beta_i  \right) + \pi_E (s_\mu s_\mu^*) \left( \sum_{i=n+1}^\infty z_i \beta_i  \right) \\
&= \sum_{i=1}^n z_i \pi_E (s_\mu s_\mu^*) \beta_i  + \pi_E (s_\mu s_\mu^*) \left( \sum_{i=n+1}^\infty z_i \beta_i  \right) \\
&= z_1\beta_1  + \pi_E (s_\mu s_\mu^*) \left( \sum_{i=n+1}^\infty z_i \beta_i  \right).
\end{align*}
Since $\pi_E (s_\mu s_\mu^*)$ is a partial isometry, we have $\| \pi_E (s_\mu s_\mu^*) \| = 1$.  Thus
\begin{align*}
\left\| \pi_E (s_\mu s_\mu^*) \vec{k} - z_1\beta_1 \right\| &= \left\| \pi_E (s_\mu s_\mu^*) \left( \sum_{i=n+1}^\infty z_i \beta_i  \right) \right\| \\
&\leq  \left\| \pi_E (s_\mu s_\mu^*) \right\| \cdot \left\| \left( \sum_{i=n+1}^\infty z_i \beta_i  \right) \right\| =  \left\| \left( \sum_{i=n+1}^\infty z_i \beta_i  \right) \right\| < \epsilon.
\end{align*}
Because $\mathcal{K}$ is $\pi_E$-invariant, $\pi_E (s_\mu s_\mu^*) \vec{k} \in \mathcal{K}$.  In addition, since $\epsilon > 0$ was arbitrary, $z_1\beta_1$ is in the closure of $\mathcal{K}$.  Since $\mathcal{K}$ is closed, $z_1\beta_1 \in \mathcal{K}$, and because $z_i \neq 0$ and $\mathcal{K}$ is a subspace, we conclude $\beta_1 \in \mathcal{K}$.  By Lemma~\ref{inv-subspace-with-basis-elt-all-lem}(b) it follows that $\mathcal{K} = \Hi_{[\alpha] }$.  Hence $\pi_E |_{ \Hi_{ [\alpha] } }$ is an irreducible algebra representation.

\noindent \textsc{Case II:}  $\alpha \in E^*_\textnormal{sing}$. \\
In this case $[\alpha]$ consists of finite paths all ending at a particular singular vertex $v_0$.  Let $\mathcal{K}$ be a closed nonzero $\pi_E$-invariant subspace of $H_{ [\alpha] }$, and choose a nonzero element $\vec{k} \in \mathcal{K}$.  Write
$$ \vec{k} = \sum_{i=1}^\infty z_i \beta_i$$
with the $\beta_i$'s distinct elements of $[\alpha]$ and $z_1 \in \C \setminus \{ 0 \}$.  Then
$$ \pi_E (s_{\beta_1}^*) \vec{k} = \sum_{i=1}^\infty z_i \pi_E (s_{\beta_1}^*) \beta_i = z_1 v_0 + \sum_{i \in A} z_i \gamma_i$$
where
$$A := \{ i \in \N :  \beta_i = \beta_1 \gamma_i  \text{ for some $\gamma_i \in E^* \setminus E^0$} \}.$$  Note that $A \subseteq \{ 2, 3, \ldots \}$ and for each $i \in A$, the path $\gamma_i$ is a cycle (necessarily of length one or greater) with $s(\gamma_i) = r(\gamma_i) = v_0$.
If $A = \emptyset$, then $z_1 v_0 = \pi_E (s_{\beta_1}^*) \vec{k} \in \mathcal{K}$, and $v_0 \in \mathcal{K}$, so that $\mathcal{K} = \Hi_{[\alpha] }$ by Lemma~\ref{inv-subspace-with-basis-elt-all-lem}(b), and $\pi_E |_{ \Hi_{ [\alpha] } }$ is an irreducible algebra representation.  (Note that if $v_0$ is a sink, then no path can extend $\beta_1$ because $r(\beta_1) = v_0$.  Hence we always have $A = \emptyset$ whenever $v_0$ is a sink.)

If $A$ is nonempty, define 
$$\vec{h} :=  \pi_E (s_{\beta_1}^*) \vec{k} \in \mathcal{K}$$
and by relabeling the indices in $A$, write
$$\vec{h} = z_1 v_0 + \sum_{i=1}^\infty y_i \gamma_i \in \mathcal{K}$$
with $y_i \in \C$ for all $i \in \N$ and the $\gamma_i$'s distinct cycles based at $v_0$.

Let $\epsilon > 0$.  Choose $n \in \N$ such that 
$$ \left\| \sum_{i=n+1}^\infty y_i \gamma_i \right\| < \epsilon.$$
Consider the first edge in each of the cycles $\gamma_1, \ldots, \gamma_n$, and define
$$ B := \{ e \in E^1 : \text{$e$ is equal to the first edge of $\gamma_i$ for some $1 \leq i \leq n$} \}.$$
Note that $B$ is a finite subset of $E^1$, and $\pi_E (\sum_{e \in B} s_es_e^*)v_0 = 0$ .  Furthermore, 
$$ \pi_E \left( \sum_{e \in B} s_es_e^* \right) \vec{h} = \sum_{i=1}^n y_i \gamma_i + \sum_{i \in C} y_i \gamma_i$$
where $$C := \{ i \in \N : i \geq n+1 \text{ and the first edge of $\gamma_i$ is an element of $B$} \}.$$  Note that $C \subseteq \{ n+1, n+2, \ldots \}$.  Define $D:=  \{ n+1, n+2, \ldots \} \setminus C$.  Then
$$ \vec{h} - \pi_E \left( \sum_{e \in B} s_es_e^* \right) \vec{h}
= z_1 v_0 + \sum_{i \in D} y_i \gamma_i$$
and using the fact the $\gamma_i$'s are orthonormal
\begin{align*}
\left\| \left(  \vec{h} - \pi_E \left( \sum_{e \in B} s_es_e^* \right) \vec{h} \right) - z_1 v_0  \right\| &= 
\left\| \sum_{i \in D} y_i \gamma_i \right\| = \left( \sum_{i \in D} \| y_i \|^2 \right)^{1/2}
\\
&\leq \left( \sum_{i =n+1}^\infty \| y_i \|^2 \right)^{1/2} = \left\| \sum_{i=n+1}^\infty y_i \gamma_i \right\| < \epsilon.
\end{align*}
Since $\mathcal{K}$ is $\pi_E$-invariant,  $\vec{h} - \pi_E \left( \sum_{e \in B} s_es_e^* \right) \vec{h} \in \mathcal{K}$.  Because $\epsilon > 0$ was arbitrary, $z_1 v_0$ is in the closure of $\mathcal{K}$, and since $\mathcal{K}$ is closed, we conclude $z_1 v_0 \in \mathcal{K}$.  In addition, since $z_1 \neq 0$ and $\mathcal{K}$ is a subspace, we have $v_0 \in \mathcal{K}$.  By Lemma~\ref{inv-subspace-with-basis-elt-all-lem}(b) it follows that $\mathcal{K} = \Hi_{[\alpha] }$.  Hence $\pi_E |_{ \Hi_{ [\alpha] }}$ is an irreducible $*$-representation.
\end{proof}

\begin{notation} \label{rep-for-st-class-not}
Let $E = (E^0, E^1, r, s)$ be a graph.  By Proposition~\ref{restriction-irreducible-prop}(a), if $k$ is a field and $\alpha \in \partial E$, then the subspace $V_{ ([\alpha], k)} := \operatorname{span}_k [\alpha]$ is $\rho_{E, k}$-invariant, and the restriction $\rho_{E, k} |_{ V_{ ([\alpha], k)} } $ is an irreducible algebra representation.  For convenience of notation, whenever $\alpha \in \partial E$ we let
$$\rho_{ ([\alpha], k)} := \rho_{E, k} |_{ V_{ ([\alpha], k)} } $$
denote this restriction.

Likewise, by Proposition~\ref{restriction-irreducible-prop}(b) for any $\alpha \in \partial E$ the subspace $\Hi_{ [\alpha] } := \ell^2 ( [ \alpha ])$ is $\pi_E$-invariant, and the restriction $\pi_E |_{ H_{ [\alpha] } }$ is an irreducible $*$-representation.  For convenience of notation, whenever $\alpha \in \partial E$ we let
$$\pi_{ [\alpha] } := \pi_E |_{ H_{ [\alpha] } }$$
denote this restriction.
\end{notation}

\begin{proposition} \label{equiv-repn-implies-st-equiv-paths-prop}
Let $E = (E^0, E^1, r, s)$ be a graph.
\begin{itemize}
\item[(a)] Let $k$ be a field.  If $\alpha, \beta \in \partial E$ and there exists a nonzero $k$-linear map $T :  V_{ ([\alpha], k)} \to  V_{ ([\beta], k)}$ such that
$$ T \circ \rho_{( [\alpha], k)}(a) = \rho_{( [\beta], k)} (a) \circ T  \qquad \text{for all $a \in L_k(E)$},$$
then $[\alpha ] = [\beta ]$ and $T = z \operatorname{Id}_{ V_{ ([\alpha], k ) } }$.  Consequently, for any $\alpha, \beta \in \partial E$ with $[ \alpha ] \neq [ \beta ]$ we have
$$ \operatorname{End}_{L_k(E)} V_{ ([\alpha], k ) }  \cong k \quad \text{ and } \quad 
\operatorname{Hom}_{L_k(E)} (V_{ ([\alpha], k ) }, V_{ ([\beta], k ) }) = \{ 0 \}.
$$
\item[(b)]  If $\alpha, \beta \in \partial E$ and there exists a nonzero bounded linear map $T :  \Hi_{ [\alpha] } \to  \Hi_{ [\beta ]}$ such that
$$ T \circ \pi_{ [\alpha] } (a)  =\pi_{ [\beta] } (a) \circ T  \qquad \text{for all $a \in C^*(E)$},$$
then $[\alpha ] = [\beta ]$ and $T = z \operatorname{Id}_{ \Hi_{ [\alpha] } }$.  Consequently, for any $\alpha, \beta \in \partial E$ with $[ \alpha ] \neq [ \beta ]$ we have
$$ \operatorname{End}_{C^*(E)} \Hi_{ [\alpha] }  \cong \C, \qquad  \qquad \operatorname{Unitary}_{C^*(E)} \Hi_{ [\alpha] }  \cong \T,$$
and
$$
\operatorname{Hom}_{C^*(E)} (\Hi_{ [\alpha ] }, \Hi_{ [\beta] }) = \{ 0 \}.
$$
\end{itemize}
\end{proposition}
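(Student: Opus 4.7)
My plan is a Schur's lemma–style analysis tailored to the explicit action of the elements $s_\mu s_\mu^*$ and $s_e^*$ on boundary paths. I will treat part (a) in detail; part (b) will run along the same skeleton with norm approximation replacing finite-support bookkeeping, using the $\varepsilon$-truncation trick already employed in the proof of Proposition~\ref{restriction-irreducible-prop}(b).

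First I would upgrade a nonzero intertwiner $T$ to a $k$-linear isomorphism. The intertwining relation makes $\ker T$ a $\rho_{([\alpha],k)}$-invariant subspace of $V_{([\alpha],k)}$ and $\operatorname{im} T$ a $\rho_{([\beta],k)}$-invariant subspace of $V_{([\beta],k)}$; by the irreducibility in Proposition~\ref{restriction-irreducible-prop}(a) and the hypothesis $T \neq 0$, both are pinned to their extreme values, so $T$ is a bijection.

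The core step is to show $[\alpha] = [\beta]$. Pick any $\gamma \in [\alpha]$ with $T(\gamma) \neq 0$ and write $T(\gamma) = \sum_{i=1}^{n} z_i \beta_i$ as a finite combination of distinct basis vectors $\beta_i \in [\beta]$ with $z_i \in k \setminus \{0\}$. For any prefix $\mu$ of $\gamma$, the identity $\rho_{E,k}(s_\mu s_\mu^*)\gamma = \gamma$ combined with the intertwining relation gives $T(\gamma) = \rho_{E,k}(s_\mu s_\mu^*) T(\gamma)$; since $\rho_{E,k}(s_\mu s_\mu^*)$ fixes each $\beta_i$ that begins with $\mu$ and annihilates the rest, distinctness of the $\beta_i$ forces every $\beta_i$ to begin with $\mu$. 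In Case~I ($\gamma \in E^\infty$), applying this for prefixes of every length collapses each $\beta_i$ to $\gamma$, whence $n=1$ and $T(\gamma) = z\gamma$ with $\gamma \in [\beta]$. In Case~II ($\gamma \in E^*_\textnormal{sing}$, $v_0 := r(\gamma)$), taking $\mu = \gamma$ yields $\beta_i = \gamma \delta_i$ for paths $\delta_i$ with $s(\delta_i) = v_0$; the remaining possibility that some $\delta_i$ is a nontrivial cycle through $v_0$ (available when $v_0$ is an infinite emitter) is eliminated by applying $\rho_{E,k}(s_\gamma^*)$ to obtain $T(v_0) = \sum_i z_i \delta_i$, observing that $\rho_{E,k}(s_e^*) v_0 = 0$ for every $e \in E^1$, and using distinctness of the tails $\delta_i'$ produced by stripping a leading edge to conclude that $z_i = 0$ whenever $|\delta_i| > 0$. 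In either case $\gamma \in [\beta]$, so $[\alpha] = [\beta]$.

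With $[\alpha] = [\beta]$, the same analysis applied to each $\gamma' \in [\alpha]$ gives $T(\gamma') = z(\gamma')\gamma'$ for some scalar $z(\gamma')$. Constancy of $z$ follows from shift-tail equivalence: given $\gamma, \gamma' \in [\alpha]$, write $\gamma = \mu\delta$ and $\gamma' = \nu\delta$ with $\mu, \nu \in E^*$ and $\delta \in \partial E$ (allowing $\delta = v_0$ in the singular case), so that $\rho_{E,k}(s_\mu s_\nu^*) \gamma' = \gamma$; intertwining then yields $z(\gamma) = z(\gamma')$. Hence $T = z \operatorname{Id}_{V_{([\alpha],k)}}$, and the $\operatorname{Hom}$ and $\operatorname{End}$ identifications are immediate; in the $C^*$-setting the unitarity condition $\|U x\| = \|x\|$ further restricts $z$ to $\mathbb{T}$, giving $\operatorname{Unitary}_{C^*(E)} \Hi_{[\alpha]} \cong \mathbb{T}$. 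The main obstacle I anticipate is Case~II above: infinite emitters carrying cycles through $v_0$ mean that a purely prefix-based argument cannot separate $\gamma$ from $\gamma$ followed by a cycle, which is exactly why the auxiliary reduction through $T(v_0)$ and the annihilation $\rho_{E,k}(s_e^*) v_0 = 0$ are essential.
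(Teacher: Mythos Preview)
Your argument is correct, but it diverges from the paper's in one illuminating way. After writing $T(\gamma) = \sum_i z_i \beta_i$ and using prefixes $\mu$ of $\gamma$ to show that each $\beta_i$ extends $\gamma$, you split into Case~I and Case~II, handling the singular case by passing to $T(v_0)$ and annihilating the nontrivial tails $\delta_i$ via $\rho_{E,k}(s_e^*)v_0 = 0$. The paper instead runs the prefix argument in \emph{both} directions: having shown each $\beta_i$ extends $\gamma$, it then fixes $j$, takes prefixes $\nu_N$ of $\beta_j$, and uses the intertwining relation on $\rho_{E,k}(s_{\nu_N}s_{\nu_N}^*)$ to deduce (from the nonvanishing of the $\beta_j$-term on the right) that $\rho_{E,k}(s_{\nu_N}s_{\nu_N}^*)\gamma \neq 0$, hence $\nu_N$ is a prefix of $\gamma$; ranging over $N$ forces $\gamma$ to extend $\beta_j$. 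The two extension relations together give $\beta_i = \gamma$ for all $i$ with no case distinction. Your approach buys a concrete, hands-on treatment of the infinite-emitter subtlety you flagged (cycles through $v_0$), at the cost of some extra bookkeeping; the paper's symmetric argument is shorter and uniform across the infinite and singular cases. Your preliminary Schur's-lemma step (making $T$ bijective) is correct but unused in either approach; the paper works directly with a nonzero $T$ throughout.
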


\begin{proof}
We shall prove (b) first.  

Suppose $T :  \Hi_{ [\alpha] } \to  \Hi_{ [\beta] }$ is a nonzero bounded linear map such that
$$ T \circ \pi_{ [\alpha] } (a)  =\pi_{ [\beta] } (a) \circ T  \qquad \text{for all $a \in C^*(E)$}.$$
Since $T$ is nonzero and $\Hi_{ [\alpha] } := \ell^2( [\alpha] ) =\overline{\operatorname{span}} \ [\alpha]$, there exists $\gamma \in [\alpha]$ such that $T (\gamma) \neq 0$.  Since $\Hi_{ [\beta] } := \ell^2( [\beta] ) =\overline{\operatorname{span}} \ [\beta]$ we may write
$$ T(\gamma) = \sum_{i \in I} z_i \beta_i$$
for $z_i \in \C \setminus \{ 0 \}$, $\beta_i \in [\beta]$, and with the $\beta_i$'s distinct for all $i \in I$.  Note that since $T (\gamma)$ is nonzero, there is at least one (nonzero) term in this sum.  

For each $N \in \N$ with $N \leq | \gamma |$, define 
$$\mu_N := e_1 \ldots e_N \quad \text{ with each $e_i \in E^1$}$$
to be the finite path consisting of the first $N$ edges of $\gamma$.  Then
\begin{align}
\sum_{i \in I} z_i  \pi_{ [\alpha]} (s_{\mu_N} s_{\mu_N}^*) \beta_i &= \pi_{ [\alpha] } (s_{\mu_N} s_{\mu_N}^*) \left( \sum_{i \in I}  \beta_i \right) =  \pi_{ [\alpha] } (s_{\mu_N} s_{\mu_N}^*) \circ T (\gamma) \notag \\
&= T \circ \pi_{ [\alpha]} (s_{\mu_N} s_{\mu_N}^*) (\gamma) = T (\gamma) = \sum_{i \in I}  z_i \beta_i. \label{lin-ind-sum1-eq}
\end{align}
For each $i \in I$ the element  $\pi_{ [\alpha] } (s_{\mu_N} s_{\mu_N}^*) \beta_i$ is either $\beta_i$ or zero depending on whether $\mu_N$ is an initial path of $\beta_i$ or not.  However, since $\{ \beta_i \}_{i \in I}$ is linearly independent and $z_i \neq 0$ for all $i \in I$, the only way we can have the equality of the two linear combinations in \eqref{lin-ind-sum1-eq} is if $\pi_{ [\alpha] } (s_{\mu_N} s_{\mu_N}^*) \beta_i = \beta_i$ for all $i \in I$.  Thus for each $i \in I$ we have that $\mu_N := e_1 \ldots e_N$ is an initial path of $\beta_i$.  Because $N \in \N$ is an arbitrary natural number less than the length of $\gamma$, it follows that $\beta_i$  extends $\gamma$ for each $i \in I$.

In a similar manner, fix $j \in I$.  For any $N \in \N$ with $N \leq | \beta_j|$, let
$$ \nu_N :=  \beta_{j,1} \beta_{j,2} \ldots \beta_{j,N} \qquad \text{ with each $\mu_{j,k} \in E^1$}$$
be the finite path consisting of the first $N$ edges of $\beta_j$.  Then
\begin{align}
T \circ \pi_{ [\alpha]} (s_{\nu_N} s_{\nu_N}^*) (\gamma) &= \pi_{ [\beta] } (s_{\nu_N} s_{\nu_N}^*) \circ T (\gamma) =  \pi_{[\beta]} (s_{\nu_N} s_{\nu_N}^*) \left( \sum_{i \in I} z_i \beta_i \right) \notag \\
&=  \sum_{i \in I} z_i \pi_{ [\beta] } ( s_{\nu_N} s_{\nu_N}^* ) \beta_i \label{lin-ind-sum1-eq}
\end{align}
We observe that $\pi_{ [\alpha] } ( s_{\nu_N} s_{\nu_N}^* ) \beta_i$ is equal to either $\beta_i$ or zero, depending on whether $\nu_N$ is an initial path of $\beta_i$ or not.  Since $\{ \beta_i \}_{i \in I}$ is linearly independent, the nonzero terms in the sum $\sum_{i \in I} z_i \pi_{ [\beta] } ( s_{\nu_N} s_{\nu_N}^* ) \alpha_i$ are linearly independent.  In addition, since $z_j \neq 0$ and $\pi_{ [\alpha] } ( s_{\nu_N} s_{\nu_N}^* ) \beta_j = \beta_j \neq 0$, we conclude that 
$$\sum_{i \in I}  z_i \pi_{ [\beta] } ( s_{\nu_N} s_{\nu_N}^* ) \beta_i \neq 0.$$  Because $T$ is linear, and thus $T(0) = 0$, the fact $T \circ \pi_{ [\alpha] } (s_{\nu_N} s_{\nu_N}^*) (\beta) \neq 0$ implies $\pi_{ [\alpha] } (s_{\nu_N} s_{\nu_N}^*) (\gamma) \neq 0$.  Hence $\pi_{ [\alpha] } (s_{\nu_N} s_{\nu_N}^*) (\gamma) = \gamma$, and $\gamma$ extends $\nu_N$.  Because $N \in \N$ is an arbitrary natural number with $N \leq | \beta_j|$, we conclude $\gamma$ extends $\beta_j$ for each $j \in I$.  Together with the previous paragraph, this implies $\beta_i = \gamma$ for all $i \in I$.

Since the $\beta_i$ were chosen distinct and $\beta_i = \gamma$ for all $i \in I$, we conclude that there is a single $\beta_i$.  In other words, $I = \{ i \}$ is a singleton set with $\beta_i = \gamma$, and hence $T( \gamma) =  z \gamma$ for some nonzero $z \in \C$.  In particular, since $\gamma \in [\alpha]$ and $\beta_i \in [\beta]$, we have $\alpha \sim_{st} \gamma = \beta_i \sim_{st} \beta$.  Thus $[\alpha] = [\beta ]$.

Finally, for any $\delta \in [\alpha]$ we have $\delta \sim_{st} \alpha \sim_{st} \gamma$, and hence there exists $\mu, \nu \in E^*$ and $\lambda \in \partial E$ such that $\delta = \mu \lambda$ and $\gamma = \nu \lambda$.  Thus
\begin{align*}
T (\delta) &= T \circ \pi_{ [\alpha ] } (s_\mu s_\nu^*) \gamma =  \pi_{ [\beta ] } (s_\mu s_\nu^*) \circ T (\gamma ) 
\\
&=  \pi_{ [\beta ] } (s_\mu s_\nu^*) (z\gamma) = z  \pi_{ [\beta ] } (s_\mu s_\nu^*) (\gamma) = z \delta.
\end{align*}
Since $T (\delta) = z \delta$ for all $\delta \in [\alpha]$ and $\Hi_{ [\alpha] } := \ell^2( [\alpha] ) =\overline{\operatorname{span}} \ [\alpha]$, we conclude  $T = z \operatorname{Id}_{ \Hi_{ [\alpha] } }$. Finally, we observe that $T = z \operatorname{Id}_{ \Hi_{ [\alpha] } }$ is a unitary if and only if $|z| = 1$ if and only if $z \in \T$.

The proof of (a) is very similar.
\end{proof}

We are now in a position to decompose the representations of Proposition~\ref{representation-exists-prop} as direct sums of irreducible representations.  Moreover, we will show these irreducible subrepresentations are pairwise inequivalent.

\begin{theorem} \label{decomposition-thm}
Let $E = (E^0, E^1, r, s)$ be a graph.  
\begin{itemize}
\item[(a)] For any field $k$ , let $\rho_{E,k} : L_K(E) \to  \End_k V_{ ( \partial E ,k) }$ denote the representation of Proposition~\ref{representation-exists-prop}(a).  For each $\alpha \in \partial E$ the subspace $V_{ ([\alpha],k) } := \operatorname{span}_k [\alpha]$ is invariant for $\rho_{E,k}$, and the restriction $\rho_{ ( \alpha, k) } :=  \rho_{E,k} |_{ V_{ ([\alpha],k) } }$ is an irreducible representation of $L_K(E)$ on $V_{ ([\alpha],k) }$.  In addition, $$V_{(\partial E,k)} = \bigoplus_{ [\alpha] \in \widetilde{\partial} E }  V_{ ([ \alpha ], k) }\qquad  \text { and } \qquad  \rho_{E,k} = \bigoplus_{  [\alpha] \in \widetilde{\partial} E } \rho_{ ( \alpha, k) }$$
is a decomposition of  $\rho_{E,k}$ as a direct sum of irreducible representations.

\noindent Moreover, for any $\alpha, \beta \in \partial E$ the following are equivalent:
\begin{itemize}
\item[(i)] $\rho_{( [\alpha], k)}$ is algebraically equivalent to $\rho_{( [\beta], k)}$.
\item[(ii)] $\rho_{( [\alpha], k)} = \rho_{( [\beta], k)}$.
\item[(iii)] $\alpha \sim_{st} \beta$ (or equivalently, $[\alpha ] = [ \beta]$).
\end{itemize}

\medskip

\item[(b)] Let $\pi_E : C^*(E) \to \B ( H_{ \partial E})$ denote the representation of Proposition~\ref{representation-exists-prop}(b).  For each $\alpha \in \partial E$ the closed subspace $\Hi_{ [\alpha] } := \ell^2( [\alpha ] )$ is invariant for $\pi_E$, and the restriction $\pi_{ [ \alpha ] } :=  \pi_E |_{ \Hi_{ [\alpha] } }$ is an irreducible representation of $C^*(E)$ on $\Hi_{ [\alpha] }$.   In addition, $$\Hi_E = \bigoplus_{ [\alpha] \in \widetilde{\partial} E } \Hi_{ [\alpha ] } \qquad \text{ and } \qquad \pi_E = \bigoplus_{  [\alpha] \in \widetilde{\partial} E } \pi_{ [\alpha ] }$$
is a decomposition of  $\pi_E$ as a direct sum of irreducible representations.

\noindent Moreover, for any $\alpha, \beta \in \partial E$ the following are equivalent:
\begin{itemize}
\item[(i)] $\pi_{ [\alpha] }$ is unitarily equivalent to $\pi_{ [\beta]}$.
\item[(ii)] $\pi_{ [\alpha] } = \pi_{ [\beta]}$.
\item[(iii)] $\alpha \sim_{st} \beta$ (or equivalently, $[\alpha ] = [ \beta]$).
\end{itemize}

\end{itemize}
\end{theorem}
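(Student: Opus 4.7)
The plan is to assemble results already in hand, since the substantive content has been packaged into the two preceding propositions. The $\rho_{E,k}$-invariance of $V_{([\alpha],k)}$ together with the irreducibility of $\rho_{([\alpha],k)}$ is exactly Proposition~\ref{restriction-irreducible-prop}(a), and the corresponding statements for $\Hi_{[\alpha]}$ and $\pi_{[\alpha]}$ are Proposition~\ref{restriction-irreducible-prop}(b); the first step will simply be to invoke these.

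For the direct-sum decompositions, I will use that $\sim_{st}$ is an equivalence relation on $\partial E$, so we have a set-theoretic partition $\partial E = \bigsqcup_{[\alpha] \in \widetilde{\partial} E} [\alpha]$. Because $\partial E$ is a $k$-basis of $V_{(\partial E,k)}$, this yields the algebraic direct sum $V_{(\partial E,k)} = \bigoplus_{[\alpha]} V_{([\alpha],k)}$, and because each summand is $\rho_{E,k}$-invariant, the restrictions assemble into $\rho_{E,k} = \bigoplus_{[\alpha]} \rho_{([\alpha],k)}$. In case (b) the same partition, now applied to the orthonormal basis $\partial E$ of $\Hi_{\partial E} = \ell^2(\partial E)$, produces a Hilbert space direct sum $\Hi_{\partial E} = \bigoplus_{[\alpha]} \Hi_{[\alpha]}$ into mutually orthogonal closed $\pi_E$-invariant subspaces, so $\pi_E = \bigoplus_{[\alpha]} \pi_{[\alpha]}$.

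For the chain of equivalences (i)--(iii) I will verify each implication in turn. The direction (ii) $\Rightarrow$ (i) is trivial via the identity map, which is both a $k$-linear isomorphism and a unitary. The direction (iii) $\Rightarrow$ (ii) is tautological, since by definition $V_{([\alpha],k)} := \operatorname{span}_k [\alpha]$ and $\Hi_{[\alpha]} := \ell^2([\alpha])$ depend only on the equivalence class $[\alpha]$, so if $[\alpha] = [\beta]$ the two restrictions coincide on the nose. The only substantive implication is (i) $\Rightarrow$ (iii): in (a), an algebraic equivalence supplies a nonzero $k$-linear isomorphism $T : V_{([\alpha],k)} \to V_{([\beta],k)}$ intertwining the two representations, and Proposition~\ref{equiv-repn-implies-st-equiv-paths-prop}(a) forces $[\alpha] = [\beta]$; in (b), a unitary equivalence likewise supplies a nonzero bounded intertwiner, and Proposition~\ref{equiv-repn-implies-st-equiv-paths-prop}(b) yields $[\alpha] = [\beta]$.

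I do not anticipate a genuine obstacle, since this theorem is essentially a bookkeeping consolidation of the previously established invariance, irreducibility, and rigidity results. The one subtle point worth flagging is that in case (b) we must use the Hilbert space direct sum rather than merely the algebraic one; this is justified by the pairwise orthogonality of the closed subspaces $\Hi_{[\alpha]}$ together with the density of $\bigoplus_{[\alpha]} V_{([\alpha],\C)} = \operatorname{span}_\C \partial E$ in $\Hi_{\partial E} = \ell^2(\partial E)$.
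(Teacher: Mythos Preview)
Your proposal is correct and follows essentially the same approach as the paper: invoke Proposition~\ref{restriction-irreducible-prop} for invariance and irreducibility, use the partition of the basis $\partial E$ by shift-tail equivalence classes to obtain the direct-sum decompositions, and handle the chain (iii) $\Rightarrow$ (ii) $\Rightarrow$ (i) trivially while deducing (i) $\Rightarrow$ (iii) from Proposition~\ref{equiv-repn-implies-st-equiv-paths-prop}. Your added remark about the Hilbert space direct sum in part~(b) is a helpful clarification that the paper leaves implicit.
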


\begin{proof}
For (a), it follows from Proposition~\ref{restriction-irreducible-prop}(a) that for each $\alpha \in \partial E$ the subspace $V_{ ([\alpha],k) } := \operatorname{span}_k [\alpha]$ is invariant for $\rho_{E,k}$, and the restriction $\rho_{ ( \alpha, k) } :=  \rho_{E,k} |_{ V_{ ([\alpha],k) } }$ is an irreducible representation.  Since shift-tail equivalence is an equivalence relation, the shift-tail equivalence classes partition the basis $\partial E$ of 
$V_{(\partial E,k)}$.  Hence $V_{(\partial E,k)} = \bigoplus_{ [\alpha] \in \widetilde{\partial} E } V_{ ([ \alpha ], k) }$ and $\rho_{E,k} = \bigoplus_{  [\alpha] \in \widetilde{\partial} E } \rho_{ ( \alpha, k) }$ is a decomposition of $\rho_{E,k}$ as a direct sum of irreducible representations.  For the equivalence of (i)--(iii) in part~(a), observe that the implications (iii) $\implies$ (ii) $\implies$ (i) are trivial.  In addition, the implication (i) $\implies$ (iii) follows from Proposition~\ref{equiv-repn-implies-st-equiv-paths-prop}(a).

For (b), it follows from Proposition~\ref{restriction-irreducible-prop}(b) that for each $\alpha \in \partial E$ the closed subspace $H_{ [\alpha] } := \ell^2( [\alpha ] )$ is invariant for $\pi_E$, and the restriction $\pi_{ [ \alpha ] } :=  \pi_E |_{ H_{ [\alpha] } }$ is an irreducible representation. Since shift-tail equivalence is an equivalence relation, the shift-tail equivalence classes partition the basis $\partial E$ of $H_E$.  Hence $H_E = \bigoplus_{ [\alpha] \in \widetilde{\partial } E } H_{ [\alpha ] }$ and $\pi_E = \bigoplus_{  [\alpha] \in \widetilde{\partial } E } \pi_{ [\alpha ] }$ is a decomposition of $\pi_E$ as a direct sum of irreducible representations.  For the equivalence of (i)--(iii) in part~(b), observe that the implications (iii) $\implies$ (ii) $\implies$ (i) are trivial.  In addition, the implication (i) $\implies$ (iii) follows from Proposition~\ref{equiv-repn-implies-st-equiv-paths-prop}(b).
\end{proof}

\begin{corollary} \label{number-STE-algebraic-unitary-equiv-classes-cor}
Let $E$ be graph.
\begin{itemize}
\item[(1)]  For the graph $C^*$-algebra $C^*(E)$, we have

$$
\left\vert \left\{ 
\begin{matrix}
 \text{shift-tail} \\
 \text{equivalence} \\
 \text{classes of $\partial E$ }
\end{matrix}
\right\} \right\vert
\leq
\left\vert \left\{ 
\begin{matrix}
 \text{unitary equivalence} \\
 \text{classes of irreducible} \\
 \text{$*$-repns.~of $C^*(E)$ }
\end{matrix}
\right\} \right\vert.
$$

$ $

\noindent Consequently, if $C^*(E)$ has the property that any two $*$-representations of $C^*(E)$ are unitarily equivalent, then any two paths in $\partial E$ are shift-tail equivalent. 

\item[(2)]  If $k$ is any field, then for the Leavitt path algebra $L_k(E)$ we have

$$
\left\vert \left\{ 
\begin{matrix}
 \text{shift-tail} \\
 \text{equivalence} \\
 \text{classes of $\partial E$ }
\end{matrix}
\right\} \right\vert
\leq
\left\vert \left\{ 
\begin{matrix}
 \text{algebraic equivalence} \\
 \text{classes of irreducible} \\
 \text{repns.~of $L_k(E)$ }
\end{matrix}
\right\} \right\vert.
$$

$ $

\noindent Consequently, if there exists a field $k$ such that any two representations of $L_k(E)$ are algebraically equivalent, then any two paths in $\partial E$ are shift-tail equivalent.
\end{itemize}
\end{corollary}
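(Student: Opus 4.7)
The plan is to deduce the corollary as an essentially immediate consequence of Theorem~\ref{decomposition-thm}, by producing an injection from shift-tail equivalence classes of boundary paths into the relevant set of equivalence classes of irreducible representations.

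For part~(1), I would define a map
$$\Phi : \widetilde{\partial} E \longrightarrow \left\{ \text{unitary equivalence classes of irreducible $*$-representations of $C^*(E)$} \right\}$$
by $\Phi([\alpha]) := [\pi_{[\alpha]}]$, where $\pi_{[\alpha]}$ is the irreducible $*$-representation on $\Hi_{[\alpha]}$ constructed in Notation~\ref{rep-for-st-class-not}. First, $\pi_{[\alpha]}$ is irreducible by Theorem~\ref{decomposition-thm}(b), so the map lands in the intended codomain. Second, $\Phi$ is well-defined: if $[\alpha] = [\beta]$, then by the implication (iii)$\Rightarrow$(ii) of Theorem~\ref{decomposition-thm}(b) we have $\pi_{[\alpha]} = \pi_{[\beta]}$, hence they are trivially unitarily equivalent. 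Third, $\Phi$ is injective: if $\pi_{[\alpha]}$ is unitarily equivalent to $\pi_{[\beta]}$, then by the implication (i)$\Rightarrow$(iii) of Theorem~\ref{decomposition-thm}(b) we get $[\alpha] = [\beta]$. Injectivity of $\Phi$ gives the claimed cardinality inequality.

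For the ``consequently'' statement in~(1), suppose any two $*$-representations of $C^*(E)$ are unitarily equivalent. Then in particular any two irreducible $*$-representations are unitarily equivalent, so the right-hand side of the inequality has cardinality at most one. The cardinality of the left-hand side is therefore also at most one, which means any two boundary paths in $\partial E$ are shift-tail equivalent.

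Part~(2) is proved in exactly the same way, using the representations $\rho_{([\alpha],k)}$ from Notation~\ref{rep-for-st-class-not} and the equivalence of (i) and (iii) in Theorem~\ref{decomposition-thm}(a) to check well-definedness and injectivity of the analogous map into algebraic equivalence classes of irreducible representations of $L_k(E)$. There is no real obstacle here; the only content is recognizing that Theorem~\ref{decomposition-thm} already packages precisely the well-definedness and injectivity needed, so the corollary is a direct repackaging of that theorem in cardinality language.
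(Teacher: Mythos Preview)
Your proposal is correct and matches the paper's intent: the corollary is stated without proof immediately after Theorem~\ref{decomposition-thm}, so the paper treats it as an immediate consequence, and your argument---building an injection $[\alpha]\mapsto[\pi_{[\alpha]}]$ (resp.\ $[\alpha]\mapsto[\rho_{([\alpha],k)}]$) and using the equivalence (i)$\Leftrightarrow$(iii) of Theorem~\ref{decomposition-thm} for well-definedness and injectivity---is exactly the intended deduction.
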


\section{Lemmas Relating Graph Structure and Representation Theory} \label{lemmas-relating-graphs-repns-sec}

\begin{lemma}[Cf.~Proposition~4.1 of \cite{AR}] \label{two-distinct-cycles-implies-uncountable}
Let $E$ be a graph.  If there is a vertex of $E$ that is the base point of two distinct simple cycles, then there are an uncountable number of shift-tail equivalence classes of $\partial E$.
\end{lemma}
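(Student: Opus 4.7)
The plan is to exhibit an uncountable family of infinite boundary paths and show that each shift-tail equivalence class contains at most countably many members of this family; since an uncountable set cannot be a countable union of countable sets, this forces uncountably many classes. Fix a vertex $v$ that is the base point of two distinct simple cycles $c_1$ and $c_2$. For each binary sequence $x = (x_1, x_2, \ldots) \in \{1,2\}^{\N}$, form the infinite path $\omega_x := c_{x_1} c_{x_2} c_{x_3} \cdots \in E^\infty \subseteq \partial E$.

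Before analyzing shift-tail equivalence, I would record a preliminary observation: neither $c_1$ nor $c_2$ can be a proper initial segment of the other, since this would force the longer cycle to have an interior edge with source $v$, contradicting the simple-cycle condition. Hence $c_1$ and $c_2$ share a common initial segment of some length $r \geq 0$ and differ at their $(r+1)$-st edges, so the identity of the first cycle in any concatenation of the form $c_{u_1} c_{u_2} \cdots$ is determined by its $(r+1)$-st edge.

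The key step is to show that $\omega_x \sim_{st} \omega_y$ forces $x$ and $y$ to share a common tail in $\{1,2\}^{\N}$. Because simple cycles do not revisit $v$ in their interior, $\omega_x$ passes through $v$ precisely at the cycle-boundary positions $\sum_{i=1}^{k} |c_{x_i}|$, and the same is true for $\omega_y$. Given $\sigma_E^m(\omega_x) = \sigma_E^n(\omega_y)$, I would enlarge $m$ and $n$ by the same amount until the common tail first reaches $v$, so that $m = \sum_{i=1}^{k} |c_{x_i}|$ and $n = \sum_{i=1}^{l} |c_{y_i}|$ for some $k, l \geq 0$. The resulting equality $c_{x_{k+1}} c_{x_{k+2}} \cdots = c_{y_{l+1}} c_{y_{l+2}} \cdots$, combined with the preliminary observation applied iteratively (read off the leading cycle from position $r+1$, peel it off, and continue), yields $x_{k+j} = y_{l+j}$ for every $j \geq 1$.

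Once this is established, the cardinality count is routine: for each fixed $x$, the set of $y$ sharing a common tail with $x$ is countable (countably many choices of offset $k - l$, and finitely many choices of the finite prefix of $y$ preceding the common tail). Thus each shift-tail equivalence class meets the uncountable family $\{\omega_x\}_{x \in \{1,2\}^{\N}}$ in only countably many paths, producing uncountably many classes in $\partial E$. I expect the main obstacle to be the bookkeeping in the key step: verifying that the shifts can always be chosen to land on cycle boundaries of both $\omega_x$ and $\omega_y$ simultaneously, and that the iterative cycle-recovery procedure is unambiguous when $r > 0$ (so that $c_1$ and $c_2$ share a nontrivial prefix).
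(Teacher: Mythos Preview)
Your proposal is correct and follows essentially the same approach as the paper: construct the uncountable family $\{\omega_x\}$ of infinite paths from binary sequences of the two cycles, argue that shift-tail equivalence within this family forces the underlying binary sequences to agree on a tail, and conclude by the countable-union argument. The paper's proof is considerably terser---it simply asserts that ``any two sequences of $\mu$'s and $\nu$'s are shift-tail equivalent if and only if they are equal past a certain point''---whereas you carefully justify this via the observation that neither simple cycle can be a proper initial segment of the other and the cycle-boundary alignment of shifts; your extra care is warranted and fills in exactly the details the paper leaves implicit.
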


\begin{proof}
Let $\mu$ and $\nu$ denote distinct simple cycles based at $v$.  Each sequence of $\mu$'s and $\nu$'s determines an infinite path in $\partial E$.  There are uncountably many such sequences, and hence uncountably many such infinite paths.  Because $\mu$ and $\nu$ are distinct, any two sequences of $\mu$'s and $\nu$'s are shift-tail equivalent if and only if they are equal past a certain point.  Consequently any path equal to a sequence of $\mu$'s and $\nu$'s is shift-tail equivalent to only a countable number of paths that are sequences of $\mu$'s and $\nu$'s.  Since the countable union of countable sets is countable, we conclude that there must be an uncountable number of shift-tail equivalence classes of $\partial E$.
\end{proof}

\begin{lemma} \label{C-star-countable-no-cycles-lem}
Let $E$ be a graph.  If the collection of unitary equivalence classes of irreducible $*$-representations of $C^*(E)$ is countable, then $E$ has no cycles.
\end{lemma}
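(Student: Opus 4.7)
The plan is to prove the contrapositive: assuming $E$ contains a cycle, we will exhibit uncountably many unitary equivalence classes of irreducible $*$-representations of $C^*(E)$. The argument splits into two cases according to whether some vertex of $E$ supports two distinct simple cycles.

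In the first case, when some vertex is the base of two distinct simple cycles, Lemma~\ref{two-distinct-cycles-implies-uncountable} combined with Corollary~\ref{number-STE-algebraic-unitary-equiv-classes-cor}(1) immediately yields uncountably many equivalence classes. Otherwise every vertex supports at most one simple cycle, and we fix a simple cycle $c = e_1 \cdots e_n$ based at a vertex $v$. A short graph-theoretic argument, splitting any cycle at $v$ at each of its returns to $v$ (each piece between consecutive visits to $v$ is itself a simple cycle at $v$, hence must equal $c$), shows that every cycle at $v$ is a power of $c$. Consequently, within the shift-tail class $[c^\infty]$ the only boundary path with source $v$ is $c^\infty$ itself, so by the formulas in Proposition~\ref{representation-exists-prop}(b) the operator $\pi_{[c^\infty]}(p_v)$ is the rank-one projection onto $\C c^\infty$.

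For each $z \in \T$ set $\pi_z := \pi_{[c^\infty]} \circ \gamma_z$, where $\gamma$ is the gauge action; each $\pi_z$ is irreducible since $\gamma_z$ is an automorphism and the lattice of invariant subspaces is unchanged. Note also that $\pi_{[c^\infty]}(s_c)(c^\infty) = c \cdot c^\infty = c^\infty$ directly from the defining formulas, so $\pi_z(s_c)(c^\infty) = z^n c^\infty$. Suppose $U$ is a unitary intertwining $\pi_z$ with $\pi_{z'}$. Since $\pi_z(p_v) = \pi_{z'}(p_v) = \pi_{[c^\infty]}(p_v)$, the unitary $U$ commutes with this rank-one projection and so $U(c^\infty) = \lambda c^\infty$ for some $\lambda \in \T$. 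Evaluating the intertwining relation $U \pi_z(s_c) = \pi_{z'}(s_c) U$ at the vector $c^\infty$ yields $z^n \lambda c^\infty = (z')^n \lambda c^\infty$, and hence $z^n = (z')^n$. Because $z \mapsto z^n$ on $\T$ has finite fibers of size $n$, the family $\{\pi_z\}_{z \in \T}$ realizes uncountably many pairwise inequivalent irreducible $*$-representations, completing the contrapositive.

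The main obstacle is the structural observation in the second case that when $v$ supports a unique simple cycle $c$, every cycle at $v$ is a power of $c$; this is what forces $\pi_{[c^\infty]}(p_v)$ to be rank one, and without it the gauge-twisting argument would fail because $U$ would not be uniquely pinned down on the cyclic vector $c^\infty$. Once this structural fact is secured, the rest is a routine computation with the gauge action.
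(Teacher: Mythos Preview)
Your proof is correct.  Case~I coincides with the paper's argument verbatim.

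In Case~II the approaches diverge.  The paper passes to the quotient $C^*(E)/\mathcal{I}_{(H,B_H)}$, where $H$ is the set of vertices that cannot reach $s(\mu)$; in the quotient graph the cycle $\mu$ has no exits, the ideal generated by its vertices is Morita equivalent to $C(\T)$, and the uncountably many primitive ideals of $C(\T)$ are then lifted back to $C^*(E)$.  You instead stay inside $C^*(E)$ and work directly with the boundary-path representation $\pi_{[c^\infty]}$: the structural fact that every cycle at $v$ is a power of $c$ forces $\pi_{[c^\infty]}(p_v)$ to be the rank-one projection onto $\C c^\infty$, and gauge-twisting $\pi_z := \pi_{[c^\infty]}\circ\gamma_z$ then yields a $\T$-family of irreducibles distinguished (up to $n$-th roots) by the eigenvalue of $s_c$ on the cyclic vector $c^\infty$.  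Your route is more self-contained---it uses only the representations already constructed in Section~\ref{Irreducible-repns-sec} together with the gauge action, and avoids the ideal/quotient description, Morita equivalence, and the extension results of Remark~\ref{extension-restriction-rem}.  The paper's route, by contrast, makes the appearance of $C(\T)$ explicit and dovetails more naturally with primitive-ideal-space considerations.
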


\begin{proof}
We shall suppose that $E$ has a cycle $\mu$ and obtain a contradiction.  Without loss of generality, we may assume $\mu$ is a simple cycle.   Consider two cases.

\medskip

\noindent \textsc{Case I:}  $\mu$ is not the only simple cycle based at $s(\mu)$.

\noindent Then there exists a second simple cycle $\nu$ based at $s(\mu)$.  By Lemma~\ref{two-distinct-cycles-implies-uncountable}, there are an uncountable number of shift-tail equivalence classes of $\partial E$.  Corollary~\ref{number-STE-algebraic-unitary-equiv-classes-cor}(1) then implies that $C^*(E)$ has an uncountable number of algebraic equivalence classes of irreducible $*$-representations, which is a contradiction.

\medskip

\noindent \textsc{Case II:} $\mu$ is the unique simple cycle based at $s(\mu)$.

\noindent Let 
$$H := \{ v \in E^0 : \text{there is no path from $v$ to $s(\mu)$} \}$$
and observe that $H$ is saturated hereditary.  Let $\mathcal{I}_{(H, B_H)}$ be the closed two-sided ideal in $C^*(E)$ of Definition~\ref{ideals-def}.  Then $C^*(E) / \mathcal{I}_{(H, B_H)} \cong C^*(E \setminus (H, B_H) )$, where $E \setminus (H, B_H)$ is the graph described in Remark~\ref{ideals-quotients-rem}.  

Then $\mu$ is a cycle in $E \setminus (H, B_H)$.  Since $\mu$ is the unique simple cycle based at $s(\mu)$ in $E$, and since there is a  path from every vertex in $E \setminus (H, B_H)$ to $s(\mu)$, we conclude $\mu$ is a cycle with no exits in $E \setminus (H, B_H)$.  Let $J$ be the ideal of $C^*(E\setminus (H,B_H))$ generated by the projections corresponding to vertices on $\mu$.  Then $J$ is Morita equivalent to $C ( \mathbb{T} )$, and since $C ( \mathbb{T} )$ has uncountably many irreducible $*$-representations with pairwise distinct kernels, we conclude there exists an uncountable collection $\{ \pi_i : i \in I \}$ of irreducible $*$-representations of $J$ with pairwise distinct kernels.  Moreover, by \cite[Theorem~5.5.1]{Mur} each of these irreducible $*$-representations may be extended to obtain an uncountable collection $\{ \overline{\pi}_i : i \in I \}$ of irreducible $*$-representations of $C^*(E \setminus (H,B_H))$ with pairwise distinct kernels.  If we let $q : C^*(E) \to C^*(E) / I_{(H,B_H)} \cong C^*(E\setminus (H,B_H))$ denote the quotient map, then $\{ \overline{\pi}_i \circ q : i \in I \}$ is an uncountable collection of irreducible $*$-representations of $C^*(E)$ with pairwise distinct kernels.  Since the kernels of these $*$-representations are pairwise distinct, the $*$-representations are pairwise inequivalent.  Hence $C^*(E)$ has an uncountable number of algebraic equivalence classes of irreducible $*$-representations, which contradicts our hypotheses.

Thus we have obtained a contradiction in both cases, which implies $E$ has no cycles.
\end{proof}

\begin{remark}
When $k$ is a countable field the algebraic equivalence classes of irreducible representations of $k[x,x^{-1}]$ is countable.  (There is one irreducible representation for each irreducible polynomial.)
Thus the results of Lemma~\ref{C-star-countable-no-cycles-lem} do not hold for Leavitt path algebras; i.e., there are examples of graphs with cycles whose Leavitt path algebra has only countably many equivalence classes of irreducible representations.  Despite this, in the next lemma we are able to show that if a Leavitt path algebra has a unique irreducible representation (up to equivalence), then the graph has no cycles.
\end{remark}

\begin{lemma} \label{all-ste-implies-sat-hered-all-lem}
If $E$ is a graph and any two boundary paths of $E$ are shift-tail equivalent, then any nonempty saturated hereditary subset of $E$ is equal to all of $E^0$.
\end{lemma}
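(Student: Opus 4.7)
The plan is to argue by contradiction: assume $H$ is a nonempty saturated hereditary subset with $H \neq E^0$, and produce two boundary paths whose shift-tail (in)equivalence contradicts the hypothesis. First I need the basic fact that every vertex of $E$ is the source of at least some boundary path: starting from any vertex $v$, either $v$ is singular (so $v \in E^*_\textnormal{sing} \subseteq \partial E$), or one can inductively pick edges out of the current vertex; this process either terminates at a singular vertex (giving an element of $E^*_\textnormal{sing}$) or continues forever (giving an element of $E^\infty$).

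Next, fix any $v \in H$ and any $w \in E^0 \setminus H$. Let $\alpha$ be any boundary path with $s(\alpha) = v$; since $H$ is hereditary, every vertex visited by $\alpha$ lies in $H$, and in particular, if $\alpha$ is a singular path, then $r(\alpha) \in H$. For the second path, I want to construct a boundary path $\beta$ with $s(\beta) = w$ whose vertices all lie in $E^0 \setminus H$. Here saturation is the key input: if a regular vertex $u \notin H$ had $r(s^{-1}(u)) \subseteq H$, then saturation would force $u \in H$; hence at every regular vertex outside $H$ there is at least one edge whose range is also outside $H$. Following such edges greedily, either we eventually arrive at a singular vertex outside $H$ (giving a finite boundary path $\beta \in E^*_\textnormal{sing}$ with $r(\beta) \notin H$), or the path continues forever and produces $\beta \in E^\infty$ with every vertex outside $H$.

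By hypothesis $\alpha \sim_{st} \beta$, and I now apply Lemma~\ref{st-basic-lem} case by case. If $\alpha$ and $\beta$ are of different types, part~(i) of that lemma already gives a contradiction. If both are finite, part~(ii) says $r(\alpha) = r(\beta)$, contradicting $r(\alpha) \in H$ and $r(\beta) \notin H$. If both are infinite, part~(iii) yields a common tail $\gamma \in E^\infty$ with $\alpha = \mu\gamma$ and $\beta = \nu\gamma$; but the vertices of $\gamma$ are simultaneously vertices of $\alpha$ (hence in $H$) and vertices of $\beta$ (hence outside $H$), again a contradiction. In every case we contradict $\alpha \sim_{st} \beta$, so $H = E^0$.

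The main obstacle is the construction of $\beta$ with all vertices outside $H$; this is precisely where saturation (not merely hereditariness of the complement) is needed, and one must handle the possibility that the outward walk runs forever versus terminates at a singular vertex. Everything else is bookkeeping using Lemma~\ref{st-basic-lem} to dispose of the three cases.
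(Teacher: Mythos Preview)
Your proof is correct and follows essentially the same approach as the paper: both argue by contradiction, constructing one boundary path inside $H$ (via hereditariness) and one outside $H$ (via saturation), then observing these cannot be shift-tail equivalent. Your version is slightly more explicit in the final step, invoking Lemma~\ref{st-basic-lem} case by case, whereas the paper simply notes that paths whose vertices lie in disjoint sets cannot be shift-tail equivalent; but the underlying argument is identical.
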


\begin{proof}
Let $H$ be a nonempty saturated hereditary subset of $E$.  Choose $v \in H$.  Since $H$ is hereditary, either $v$ is a singular vertex or there exists an edge $e_1$ with $s(e_1) = v$ and $r(e_1) \in H$.  Likewise, either $r(e_1)$ is a singular vertex or there exists an edge $e_2$ with $s(e_2) = r(e_1)$ and $r(e_2) \in H$.  Continuing in the manner, we produce a boundary path (i.e., either an infinite path or a finite path ending at a singular vertex) with all its vertices in $H$.

For the sake of contradiction, suppose $H \neq E^0$.  Then there exists $w \in E^0 \setminus H$.  Since $H$ is saturated and $w \notin H$, either $w$ is a singular vertex or there exists an edge $f_1$ with $s(f_1) = w$ and $r(f_1) \notin H$.  Likewise, either $r(f_1)$ is a singular vertex or there exists an edge $f_2$ with $s(f_2) = r(f_1)$ and $r(f_2) \notin H$.  Continuing in the manner, we produce a boundary path (i.e., either an infinite path or a finite path ending at a singular vertex) with all its vertices in $E^0 \setminus H$.  Since all vertices of $e_1 e_2 \ldots$ are in $H$ and all vertices of $f_1 f_2 \ldots$ are not in $H$, we have two boundary paths of $E$ that are not shift-tail equivalent, which is a contradiction.  Hence we conclude $H = E^0$.
\end{proof}

\begin{lemma} \label{LPA-countable-no-cycles-lem}
Let $E$ be a graph and let $k$ be a field.  If $L_k(E)$ has the property that any two irreducible representations of $L_k(E)$ are algebraically equivalent, then $E$ has no cycles.  
\end{lemma}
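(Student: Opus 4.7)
The plan is to mirror the proof of Lemma~\ref{C-star-countable-no-cycles-lem}, with the role of $C(\mathbb{T})$ played by its Leavitt path algebra analogue $k[x,x^{-1}]$.  Assume for contradiction that $E$ has a cycle; after replacing it by a sub-cycle I may take a simple cycle $\mu = e_1 \ldots e_n$ with $v := s(\mu)$.  If a second simple cycle at $v$ distinct from $\mu$ exists, then Lemma~\ref{two-distinct-cycles-implies-uncountable} produces uncountably many shift-tail equivalence classes in $\partial E$, and Corollary~\ref{number-STE-algebraic-unitary-equiv-classes-cor}(2) converts this into uncountably many algebraic equivalence classes of irreducible $L_k(E)$-representations, contradicting the hypothesis.

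In the remaining case $\mu$ is the unique simple cycle at $v$.  By the hypothesis and Corollary~\ref{number-STE-algebraic-unitary-equiv-classes-cor}(2) any two boundary paths of $E$ are shift-tail equivalent, and Lemma~\ref{all-ste-implies-sat-hered-all-lem} then forces the only saturated hereditary subsets of $E^0$ to be $\emptyset$ and $E^0$.  Applying this to the saturated hereditary set $\{w \in E^0 : \text{there is no path from } w \text{ to } v\}$, which does not contain $v$, shows every vertex of $E$ has a path to $v$.  Next I will show $\mu$ has no exit: any exit $f \neq e_i$ at $s(e_i)$ could be prolonged by a path from $r(f)$ to $v$ to form a closed walk at $v$, and the initial segment of this walk ending at its first return to $v$ would be a simple cycle at $v$ using $f$, hence distinct from $\mu$, contradicting uniqueness.

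With $\mu$ a cycle without exits, the Leavitt path algebra relations at the regular vertices $s(e_1), \ldots, s(e_n)$ force $s_\mu s_\mu^* = s_\mu^* s_\mu = p_v$, so $s_\mu$ becomes a unit in the corner $p_v L_k(E) p_v$.  Since every finite path in $E$ starting at $v$ is a segment of $\mu^\infty$, a routine spanning argument shows the corner is spanned by $\{s_\mu^i : i \in \Z\}$, and linear independence of these powers follows from the canonical $\Z$-grading on $L_k(E)$; hence $p_v L_k(E) p_v \cong k[x,x^{-1}]$ via $s_\mu \leftrightarrow x$.  Over any field $k$ the algebra $k[x,x^{-1}]$ has infinitely many pairwise inequivalent simple modules (one per monic irreducible polynomial in $k[x]$ other than $x$, of which infinitely many exist by an Euclid-style argument), and via the standard corner-ring correspondence between simple $p_v L_k(E) p_v$-modules and simple $L_k(E)$-modules $N$ with $p_v N \neq 0$ (given by $N \mapsto p_v N$), these lift to infinitely many pairwise inequivalent simple $L_k(E)$-modules, contradicting the uniqueness assumption.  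The main obstacle I anticipate is the explicit identification $p_v L_k(E) p_v \cong k[x,x^{-1}]$ together with a clean invocation of the corner-ring correspondence; both are standard, but each requires either a direct calculation or a specific citation.
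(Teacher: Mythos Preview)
Your proof is correct and follows essentially the same strategy as the paper's: both reduce to a simple cycle $\mu$ that is the unique simple cycle at its basepoint, eliminate the possibility of an exit, and then derive a contradiction from the infinitely many inequivalent simple modules over $k[x,x^{-1}]$. The only cosmetic differences are that the paper dispatches the exit case by directly exhibiting two non-shift-tail-equivalent boundary paths (rather than reducing an exit to a second simple cycle via ``every vertex reaches $v$''), and phrases the final step as a Morita equivalence $L_k(E)\sim k[x,x^{-1}]$ obtained from $I_H=L_k(E)$ (rather than your explicit corner identification $p_vL_k(E)p_v\cong k[x,x^{-1}]$ plus the corner--simple correspondence).
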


\begin{proof}
We shall suppose that $E$ has a cycle $\mu$ and obtain a contradiction.  Without loss of generality, we may assume $\mu$ is a simple cycle.  

Since any two irreducible representations of $L_k(E)$ are algebraically equivalent, it follows from Corollary~\ref{number-STE-algebraic-unitary-equiv-classes-cor}(2) that any two boundary paths in $\partial E$ are shift-tail equivalent.  It then follows from Lemma~\ref{two-distinct-cycles-implies-uncountable} that $\mu$ is the only simple cycle based at $s(\mu)$.  Consider two cases.

\noindent \textsc{Case I:} $\mu$ is the unique simple cycle based at $s(\mu)$ and $\mu$ has an exit.

Let $f \in E^1$ be an exit for $\mu$.  Since $\mu$ is the unique cycle based at $s(\mu)$, we conclude that there is no path from $r(f)$ to $s(\mu)$.  Starting at $r(f)$ we may find $\alpha \in \partial E$ with $s(\alpha) = r(f)$.  (Note that $\alpha$ is either a finite path starting at $r(f)$ and ending at a singular vertex or an infinite path starting at $r(f)$.)  Since there is no finite path from $r(f)$ to $s(\mu)$, it follows that the boundary paths $\alpha$ and $\mu \mu \mu \ldots$ are not shift-tail equivalent, which is a contradiction.

\noindent \textsc{Case II:} $\mu$ is the unique simple cycle based at $s(\mu)$ and $\mu$ does not have an exit.

Let $H := \{ w \in E^0 : \text{$w$ is a vertex on $\mu$} \}$, and note that since $\mu$ does not have an exit, $H$ is hereditary.  Corollary~\ref{number-STE-algebraic-unitary-equiv-classes-cor}(2) implies that any two boundary paths of $E$ are shift-tail equivalent, and hence Lemma~\ref{all-ste-implies-sat-hered-all-lem} implies any nonempty saturated hereditary subset of $E$ is equal to all of $E^0$.  Thus $I_H = I_{\overline{H}} = I_{E^0} = L_k(E)$.  Since $\mu$ is a cycle with no exits, $I_H$ is Morita equivalent to the Leavitt path algebra of a single cycle, which in turn is Morita equivalent to the Laurent algebra $k[x,x^{-1}]$.  Hence $L_k(E)$ is Morita equivalent to $k[x,x^{-1}]$, and since $k[x,x^{-1}]$ has an infinite number of irreducible representations that are pairwise algebraically inequivalent, so does $L_k(E)$, which is a contradiction.

Thus we have a contradiction in both cases, and we conclude $E$ has no cycles.
\end{proof}

\begin{lemma} \label{line-points-exist-first-lem}
Let $E = (E^0, E^1, r, s)$ be a graph with no cycles and no line points.  Then for any vertex $v \in E^0$ and any infinite path $\alpha := f_1 f_2 \ldots \in E^\infty$ there exists a finite path $\mu := e_1 \ldots e_n$ such that $s(e_1) = v$ and $e_n \neq f_i$ for all $i \in \N$.
\end{lemma}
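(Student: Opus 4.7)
The plan is to use a bifurcation vertex reachable from $v$ together with the fact that $E$ has no cycles to rule out the infinite path $\alpha$ having any source vertex in common with more than one candidate exit.

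First, I would use the hypotheses on $E$ to locate a bifurcation vertex reachable from $v$. Since $v \in E^0$ and $E$ has no line points, by definition either $T(v)$ contains a bifurcation vertex or some vertex in $T(v)$ is the base point of a cycle. The second option is excluded because $E$ has no cycles, so there exists a bifurcation vertex $w \in T(v)$. Choose a finite path $\gamma$ from $v$ to $w$; this is either the trivial path $\gamma = v$ (when $w = v$) or a nontrivial path $\gamma = g_1 \ldots g_m$ with $s(g_1) = v$ and $r(g_m) = w$. Since $w$ is a bifurcation vertex, there exist two distinct edges $h_1, h_2 \in E^1$ with $s(h_1) = s(h_2) = w$.

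Second, I would exploit the absence of cycles to show that $\alpha$ can pass through $w$ at most once as a source. If $s(f_i) = s(f_j)$ for some $i < j$, then $f_i f_{i+1} \ldots f_{j-1}$ is a cycle in $E$, contradicting the hypothesis. Therefore the source vertices $s(f_1), s(f_2), \ldots$ of $\alpha$ are pairwise distinct, so there is at most one index $j$ with $s(f_j) = w$. If no such $j$ exists, then both $h_1$ and $h_2$ differ from every $f_i$ (since their source $w$ is not the source of any $f_i$), and we may pick $h := h_1$. If such a $j$ exists, then at most one of $h_1, h_2$ coincides with $f_j$, so we may pick $h \in \{h_1, h_2\}$ with $h \neq f_j$; for $i \neq j$ we have $s(h) = w \neq s(f_i)$, hence $h \neq f_i$. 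In either case $h \neq f_i$ for all $i \in \N$.

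Finally, define $\mu := g_1 \ldots g_m h$ if $v \neq w$ and $\mu := h$ if $v = w$. Then $\mu$ is a finite path in $E$ whose initial edge has source $v$ and whose terminal edge is $h$, which is distinct from every $f_i$, as required. The only obstacle is the bookkeeping around the case $v = w$ and the verification that $h \neq f_i$ for all $i$ simultaneously, but the no-cycles hypothesis reduces the latter to a single constraint and makes the bifurcation at $w$ sufficient to evade it.
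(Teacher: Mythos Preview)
Your proof is correct and rests on the same core idea as the paper's: locate a bifurcation vertex reachable from $v$, walk to it, and use the absence of cycles to guarantee that the final edge you choose there is distinct from every edge of $\alpha$.

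The organization differs slightly. The paper first picks an edge $e_1$ out of $v$ and splits into two cases: if $e_1$ is already off $\alpha$, it stops; if $e_1$ lies on $\alpha$, it continues \emph{along $\alpha$} until reaching a bifurcation vertex (which must occur, since otherwise the current vertex would be a line point), and then exits. Your argument instead goes straight to an arbitrary bifurcation $w \in T(v)$, without reference to $\alpha$, and then uses the clean observation that the source vertices $s(f_1), s(f_2), \ldots$ are pairwise distinct (no cycles) to conclude that at most one of the two edges out of $w$ can lie on $\alpha$. This avoids the case split and makes the role of the no-cycles hypothesis more explicit; the paper's version, by contrast, keeps the constructed path partially along $\alpha$, which is not needed for the conclusion but makes the existence of the bifurcation along the way slightly more immediate.
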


\begin{proof}
Since $E$ has no line points, $E$ has no sinks.  Thus there exists $e_1 \in E^1$ with $s(e_1) = v$.  If $e_1$ is not an edge on $\alpha$, we may let $\mu := e_1$.  If instead $e_1$ is an edge on $\alpha$, then since $E$ contains no cycles and no line points we may extend the edge $e_1$ to a finite subpath $e_1 \ldots e_{n-1}$ of $\alpha$ with $r(e_{n-1})$ a bifurcating vertex.  Since $r(e_{n-1})$ is on the path $\alpha$, we have $r(e_{n-1}) = s(f_m)$ for some $m \in \N$.  Also, since $r(e_{n-1})$ is a bifurcating vertex there exists an edge $e_n$ with $s(e_n) = r(e_{n-1})$ and $e_n \neq f_m$.  Let $\mu := e_1 \ldots e_{n-1} e_n$.  Because $E$ has no cycles, the fact that
$s(e_n) = s(f_m)$ and $e_n \neq f_m$ implies that we must have $e_n \neq f_i$ for all $i \in \N$.
\end{proof}

\begin{lemma} \label{ste-implies-line-point-lem}
Let $E = (E^0, E^1, r, s)$ be a graph.  If $E$ has no cycles and $E$ has a countable number of shift-tail equivalence classes of boundary paths, then there exists a line point in $E$.
\end{lemma}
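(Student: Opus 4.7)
The plan is to prove the contrapositive: assume $E$ has no cycles and no line points, and show that the set $\widetilde{\partial} E$ of shift-tail equivalence classes of boundary paths is uncountable. Since every sink is automatically a line point, the no-line-points hypothesis forces $E$ to have no sinks, so every vertex emits at least one edge and one can always start an infinite path from any given vertex; in particular $E^\infty \neq \emptyset$. The strategy is a diagonal construction: I will enumerate countably many supposed representatives and build an infinite path dodging each of them using Lemma~\ref{line-points-exist-first-lem}.

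The key preliminary observation is the following direct consequence of Lemma~\ref{st-basic-lem}(iii): if $\alpha,\omega \in E^\infty$ satisfy $\alpha \sim_{st} \omega$, then there exist $m,n \in \N$ with $\alpha_{m+j} = \omega_{n+j}$ for every $j \geq 1$. Thus all but finitely many edges of $\alpha$ lie in the edge set $E(\omega) := \{\omega_k : k \in \N\}$. Contrapositively, if $\alpha$ contains infinitely many edges that do \emph{not} appear in $\omega$, then $\alpha \not\sim_{st} \omega$.

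Now suppose for contradiction that $\widetilde{\partial} E$ is countable. By Lemma~\ref{st-basic-lem}(i), the infinite-path classes form a (countable, nonempty) subcollection; choose representatives $\{\omega_i : i \in I\}$ for these classes. Pick any enumeration $(i_k)_{k=1}^\infty$ of $I$ in which every element of $I$ appears infinitely often. Fix any starting vertex $v_0 \in E^0$ and set $w_0 := v_0$. Recursively, at stage $k \geq 1$, apply Lemma~\ref{line-points-exist-first-lem} to the vertex $w_{k-1}$ and the infinite path $\omega_{i_k}$ to obtain a finite path $\mu_k$ with $s(\mu_k) = w_{k-1}$ whose last edge does not occur in $\omega_{i_k}$, and set $w_k := r(\mu_k)$. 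Define
$$\alpha := \mu_1 \mu_2 \mu_3 \cdots \in E^\infty.$$
For each $i \in I$, the set $\{k : i_k = i\}$ is infinite, so $\alpha$ contains infinitely many edges outside $E(\omega_i)$; by the preliminary observation $\alpha \not\sim_{st} \omega_i$ for every $i \in I$. This contradicts the assumption that the $[\omega_i]$ exhaust the shift-tail classes of infinite paths, so $E$ must in fact contain a line point.

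The proof is essentially a routine diagonalization; the main conceptual point is the observation that shift-tail equivalence of infinite paths forces the tail edge sets to agree cofinitely, which immediately reduces the problem to producing, for each $\omega_i$, infinitely many edges of the constructed $\alpha$ outside $\omega_i$. The delicate ingredient—the existence of such edges starting at any prescribed vertex—has already been supplied by Lemma~\ref{line-points-exist-first-lem}, so the only remaining obstacle is the bookkeeping of the dovetailed enumeration, which is standard.
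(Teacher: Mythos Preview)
Your proof is correct and follows essentially the same route as the paper's: both argue by contradiction, note that the absence of line points eliminates sinks so that all boundary paths are infinite, enumerate representatives of the countably many classes, and then repeatedly invoke Lemma~\ref{line-points-exist-first-lem} in a dovetailed fashion to build an infinite path containing, for each representative $\omega_i$, infinitely many edges not appearing on $\omega_i$. The only cosmetic difference is the bookkeeping of the dovetail: the paper uses the explicit triangular scheme $\mu_{1,1},\mu_{2,1},\mu_{2,2},\mu_{3,1},\mu_{3,2},\mu_{3,3},\ldots$, while you phrase it via an abstract enumeration $(i_k)$ in which each index recurs infinitely often --- the two are interchangeable.
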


\begin{proof}
We shall prove the result by contradiction.  Suppose $E$ has no cycles, $E$ has a countable number of shift-tail equivalence classes of boundary paths, and $E$ contains no line points.  Since $E$ has no line points, $E$ has no sinks and  every boundary path of $E$ is an infinite path. For each shift-tail equivalence class choose a representative infinite path from that equivalence class, and list these representatives as
\begin{align*}
\alpha_1 &:= e_{1,1} e_{1,2} e_{1,3} \ldots \\
\alpha_2 &:= e_{2,1} e_{2,2} e_{2,3} \ldots \\
\alpha_3 &:= e_{3,1} e_{3,2} e_{3,3} \ldots \\
&\quad \vdots
\end{align*}
Note that this list is countable (i.e., either finite or countably infinite), and every infinite path is shift-tail equivalent to one of the infinite paths in this list.

Construct an infinite path inductively as follows.  First, choose any vertex $v \in E^0$, and use Lemma~\ref{line-points-exist-first-lem} to choose a finite path $\mu_{1,1}$ such that $s(\mu_{1,1}) = v$ and the final edge of $\mu_{1,1}$ is not an edge on $\alpha_1$.

Second, use Lemma~\ref{line-points-exist-first-lem} to choose a finite path $\mu_{2,1}$ such that $s(\mu_{2,1}) = r(\mu_{1,1})$ and the final edge of $\mu_{2,1}$ is not an edge on $\alpha_1$.  Then use Lemma~\ref{line-points-exist-first-lem} to choose a finite path $\mu_{2,2}$ such that $s(\mu_{2,2}) = r(\mu_{2,1})$ and the final edge of $\mu_{2,2}$ is not an edge on $\alpha_2$.

Third, use Lemma~\ref{line-points-exist-first-lem} to choose a finite path $\mu_{3,1}$ such that $s(\mu_{3,1}) = r(\mu_{2,2})$ and the final edge of $\mu_{3,1}$ is not an edge on $\alpha_1$.  Then use Lemma~\ref{line-points-exist-first-lem} to choose a finite path $\mu_{3,2}$ such that $s(\mu_{3,2}) = r(\mu_{3,1})$ and the final edge of $\mu_{3,2}$ is not an edge on $\alpha_2$.  Then use Lemma~\ref{line-points-exist-first-lem} to choose a finite path $\mu_{3,3}$ such that $s(\mu_{3,3}) = r(\mu_{3,2})$ and the final edge of $\mu_{3,3}$ is not an edge on $\alpha_3$.

Continuing in this manner we produce an infinite path
$$ \beta := \mu_{1,1} \, \mu_{2,1}  \, \mu_{2,2}  \, \mu_{3,1}  \, \mu_{3,2}  \, \mu_{3,3}  \, \mu_{4,1}  \, \mu_{4,2}  \, \mu_{4,3}  \, \mu_{4,4} \ldots$$
with the property that for each $i,j \in \N$ the final edge of $\mu_{i,j}$ is not an edge on $\alpha_j$.

Consequently, for each $\alpha_j$ in our above list, the infinite path $\beta$ contains infinitely many edges not on $\alpha_j$ (namely, the final edge of each $\mu_{i,j}$ for each $i \in \N$).  Consequently, $\beta$ is not shift equivalent to any of the $\alpha_j$ in our above list, which is a contradiction.
\end{proof}

\section{Naimark's Problem for Graph Algebras} \label{Naimark-Problem-sec}

We now state and prove our main result regarding Naimark's Problem.

\begin{theorem} \label{Naimark-thm}
Let $E = (E^0, E^1, r, s)$ be a (not necessarily countable) graph.  Then the following are equivalent.
\begin{itemize}
\item[(1)] For some field $k$, any two irreducible representations of $L_k(E)$ are algebraically equivalent.  (Equivalently: For some field $k$, any two simple left/right $L_k(E)$-modules are isomorphic.)
\item[(2)] For every field $k$, any two irreducible representations of $L_k(E)$ are algebraically equivalent. (Equivalently: For every field $k$, any two simple left/right $L_k(E)$-modules are isomorphic.)
\item[(3)] Any two irreducible $*$-representations of $C^*(E)$ are unitarily equivalent.
\item[(4)] The graph $E$ has no cycles, and any two boundary paths of $E$ are shift-tail equivalent. 
\item[(5)] There exists a line point $v \in E^0$ such that $E^0 = \overline{T(v)}$ (i.e., $E^0$ is the saturation of the hereditary subset $T(v)$).
\item[(6)] There exists an index set $\Lambda$ and there exists a field $k$ such that $L_k(E) \cong M_\Lambda (k)$ (as $*$-algebras).
\item[(7)] There exists an index set $\Lambda$ such that $L_k(E) \cong M_\Lambda (k)$ (as $*$-algebras) for every field $k$.
\item[(8)] There exists a Hilbert space $\mathcal{H}$ such that $C^*(E) \cong \mathbb{K}(\mathcal{H})$.
\end{itemize}
\end{theorem}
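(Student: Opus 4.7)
The plan is to establish the equivalence by building a cycle of implications that routes everything through condition~(5). The core observation is that (5) is the ``structural'' condition: a line point $v$ with $E^0 = \overline{T(v)}$ forces the graph to consist essentially of one single line emanating from the source of everything, so that the whole algebra collapses to the ideal attached to that line point, which we already know is a matrix/compact algebra by Lemma~\ref{ideal-of-line-point-matrix-and-compacts-lem}.

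First I would handle the easy ``representation-theoretic'' implications. The implications $(7)\Rightarrow(6)\Rightarrow(1)$ and $(2)\Rightarrow(1)$ are trivial, and similarly $(8)\Rightarrow(3)$ is the classical fact that $\K(\Hi)$ has, up to unitary equivalence, a unique irreducible $*$-representation (namely the identity on $\Hi$). For $(1)\Rightarrow(4)$: if $L_k(E)$ has a unique irreducible representation, then Corollary~\ref{number-STE-algebraic-unitary-equiv-classes-cor}(2) forces $\partial E$ to have only one shift-tail equivalence class, and Lemma~\ref{LPA-countable-no-cycles-lem} gives that $E$ has no cycles. Dually, $(3)\Rightarrow(4)$ follows from Corollary~\ref{number-STE-algebraic-unitary-equiv-classes-cor}(1) together with Lemma~\ref{C-star-countable-no-cycles-lem}.

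Next I would carry out $(4)\Rightarrow(5)$. Assume $E$ has no cycles and $\partial E$ has a single shift-tail class. Since there is only one shift-tail class (hence countably many), Lemma~\ref{ste-implies-line-point-lem} produces a line point $v\in E^0$. To show $E^0=\overline{T(v)}$, observe that Lemma~\ref{all-ste-implies-sat-hered-all-lem} ensures that every nonempty saturated hereditary subset equals $E^0$; since $T(v)$ is hereditary and nonempty, the saturation $\overline{T(v)}$ is saturated hereditary and nonempty, hence equal to $E^0$.

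Finally I would close the loop with $(5)\Rightarrow(7)$ and $(5)\Rightarrow(8)$. Let $v$ be a line point with $H:=T(v)$ and $\overline{H}=E^0$. By the discussion following Definition~\ref{ideals-def} and in Remark~\ref{ideals-quotients-rem}, $I_H = I_{\overline{H}} = I_{(E^0,\emptyset)}$ and likewise $\mathcal{I}_H = \mathcal{I}_{(E^0,\emptyset)}$; because every generator $s_\alpha s_\beta^*$ factors as $s_\alpha p_{r(\alpha)} s_\beta^*$ with $p_{r(\alpha)}\in I_{(E^0,\emptyset)}$, we get $I_H = L_k(E)$ and $\mathcal{I}_H = C^*(E)$. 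Lemma~\ref{ideal-of-line-point-matrix-and-compacts-lem} then identifies these with $M_\Lambda(k)$ and $\K(\ell^2(\Lambda))$ respectively, where $\Lambda:=\{e_1\cdots e_n\in E^*: r(e_n)\in T(v),\ s(e_n)\notin T(v)\}$; crucially, the same index set $\Lambda$ (which depends only on $E$, not on $k$) serves for every field $k$, yielding~(7).

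The only step with any substance is $(4)\Rightarrow(5)$, which relies on the nontrivial combinatorial construction in Lemma~\ref{ste-implies-line-point-lem}; everything else is essentially assembly of previously established lemmas. The main conceptual point of the proof is that the uniqueness-of-irreducible-representation hypotheses on either side (algebraic or $C^*$) are strong enough to force, via the representation-theoretic machinery of Section~\ref{Irreducible-repns-sec}, that $E$ is structurally a single line (possibly with branches that get saturated away), at which point the isomorphism with a matrix/compact algebra is immediate.
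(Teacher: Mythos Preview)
Your argument is essentially the paper's: you route everything through~(5) and invoke Lemmas~\ref{LPA-countable-no-cycles-lem}, \ref{C-star-countable-no-cycles-lem}, \ref{ste-implies-line-point-lem}, \ref{all-ste-implies-sat-hered-all-lem}, and~\ref{ideal-of-line-point-matrix-and-compacts-lem} at exactly the same junctures the paper does.

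There is one small bookkeeping gap. Your chain of implications never returns to~(2): you record $(2)\Rightarrow(1)$, but nothing in your list implies~(2), so as written condition~(2) is not shown equivalent to the others. The fix is trivial---just add $(7)\Rightarrow(2)$, which is no harder than your $(6)\Rightarrow(1)$: if $L_k(E)\cong M_\Lambda(k)$ for \emph{every} field $k$, then for every field $k$ the algebra is Morita equivalent to $k$ and hence has a unique simple module. (The paper closes its cycle with $(6)\Rightarrow(2)$ instead, arguing via Morita equivalence; your $(7)\Rightarrow(2)$ is arguably the more natural patch given how you have set things up.) Once this edge is added, your implication graph is connected and the proof is complete.
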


\begin{proof}
We shall establish the following implications.
$$
\xymatrix{
(2) \ar@{=>}[r] & (1) \ar@{=>}[r] & (4) \ar@{=>}[d] &  (3) \ar@{=>}[l] \\
(6) \ar@{=>}[u] & (7) \ar@{=>}[l] & (5) \ar@{=>}[l] \ar@{=>}[r] & (8) \ar@{=>}[u]
}
$$

\noindent $(2) \implies (1)$: This is trivial.

\noindent $(1) \implies (4)$: Lemma~\ref{LPA-countable-no-cycles-lem} implies $E$ has no cycles, and Corollary~\ref{number-STE-algebraic-unitary-equiv-classes-cor}(2) implies any two boundary paths of $E$ are shift-tail equivalent.

\noindent $(4) \implies (5)$: Lemma~\ref{ste-implies-line-point-lem} implies there exists a line point $v \in E^0$.  Since $T(v)$ is hereditary, Lemma~\ref{all-ste-implies-sat-hered-all-lem} implies $\overline{ T(v) } = E^0$.

\noindent $(5) \implies (7)$:  Let $k$ be any field, and let $H:= T(v)$ so that $\overline{H} = E^0$ by hypothesis.  Since $H$ is hereditary, we have $I_H = I_{\overline{H}}$.  If we let $\Lambda := \{ e_1 \ldots e_n \in E^* : r(e_n) \in T(v) \text{ and } r(e_{n-1}) \notin T(v) \}$, then Lemma~\ref{ideal-of-line-point-matrix-and-compacts-lem} gives $L_k(E) = I_{E^0} = I_{\overline{H}} = I_H \cong M_\Lambda (k)$.

\noindent $(7) \implies (6)$: This is trivial.

\noindent $(6) \implies (2)$: Since $L_k(E) \cong M_\lambda (k)$ is Morita equivalent to $k$, and since any two irreducible representations of $k$ are algebraically equivalent, we conclude that any two irreducible representations of $L_k(E)$ are algebraically equivalent.

\noindent $(5) \implies (8)$: Let $H:= T(v)$ so that $\overline{H} = E^0$ by hypothesis.  Since $H$ is hereditary, we have $\mathcal{I}_H = \mathcal{I}_{\overline{H}}$.  If we let $\Lambda := \{ e_1 \ldots e_n \in E^* : r(e_n) \in T(v) \text{ and } r(e_{n-1}) \notin T(v) \}$, then Lemma~\ref{ideal-of-line-point-matrix-and-compacts-lem} gives $C^*(E) = \mathcal{I}_{E^0} = \mathcal{I}_{\overline{H}} = \mathcal{I}_H \cong \mathbb{K}(\ell^2(\Lambda))$.  Hence the claim holds for $\mathcal{H} := \ell^2 (\Lambda)$.

\noindent $(8) \implies (3)$: Since $C^*(E) \cong \mathbb{K}(\mathcal{H})$ is Morita equivalent to $\C$, and since any two irreducible representations of $\C$ are unitarily equivalent, we conclude that any two irreducible representations of $C^*(E)$ are unitarily equivalent.

\noindent $(3) \implies (4)$: Lemma~\ref{C-star-countable-no-cycles-lem} implies $E$ has no cycles, and Corollary~\ref{number-STE-algebraic-unitary-equiv-classes-cor}(1) implies any two boundary paths of $E$ are shift-tail equivalent.
\end{proof}

\begin{corollary}
Naimark's problem has a positive answer for the class of graph $C^*$-algebras.  In particular, if $A$ is a (not necessarily separable) graph $C^*$-algebra with the property that any two irreducible representations are unitarily equivalent, then $A \cong \K (\Hi)$ for some (not necessarily separable) Hilbert space $\Hi$.
\end{corollary}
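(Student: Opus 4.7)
The plan is to observe that this corollary is essentially a rewording of one of the implications already established in Theorem~\ref{Naimark-thm}. Specifically, if $A$ is a graph $C^*$-algebra, then by definition $A \cong C^*(E)$ for some graph $E$, and the hypothesis that any two irreducible $*$-representations of $A$ are unitarily equivalent is exactly condition~(3) of Theorem~\ref{Naimark-thm} applied to this $E$. By the equivalence of (3) and (8) in that theorem, there exists a Hilbert space $\Hi$ with $C^*(E) \cong \K(\Hi)$, and hence $A \cong \K(\Hi)$.

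To convert this observation into a proof, I would first fix an isomorphism $A \cong C^*(E)$ and note that unitary equivalence of $*$-representations is preserved under $*$-isomorphism, so the hypothesis transfers to $C^*(E)$. Then I would invoke Theorem~\ref{Naimark-thm} to conclude. For completeness, I would also briefly address the relationship to the original formulation of Naimark's problem: the problem assumes $A$ has a \emph{unique} irreducible $*$-representation up to unitary equivalence, which is equivalent to the slightly weaker statement that any two irreducibles are unitarily equivalent, because every nonzero $C^*$-algebra admits at least one irreducible $*$-representation (via pure states and the GNS construction), and the zero algebra is trivially $\K(\{0\})$.

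There is no substantial obstacle here, since all the work has already been done in proving Theorem~\ref{Naimark-thm}; the corollary is really just a statement that the theorem answers Naimark's problem affirmatively in the class of graph $C^*$-algebras. The only minor care needed is to emphasize that the Hilbert space $\Hi$ produced in the proof of $(5) \Rightarrow (8)$ is $\ell^2(\Lambda)$ for the index set $\Lambda$ of paths ending at the line-point hereditary set, and that no separability assumption on $A$ (or countability assumption on $E$) is used anywhere in the argument, so the conclusion covers the nonseparable case as well.
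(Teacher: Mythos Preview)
Your proposal is correct and matches the paper's approach: the paper states this corollary without proof, treating it as an immediate consequence of the equivalence $(3)\iff(8)$ in Theorem~\ref{Naimark-thm}, which is exactly what you do. Your additional remarks about transferring the hypothesis along a $*$-isomorphism and about existence of at least one irreducible representation are fine but not strictly necessary.
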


\begin{corollary}
The algebraic analogue of Naimark's problem has a positive answer for the class of Leavitt path algebras.  In particular, if $A$ is the Leavitt path algebra of a (not necessarily countable) graph over a field $k$ with the property that any two irreducible representations of $A$ are algebraically equivalent, then $A \cong M_\Lambda (k)$ (as $*$-algebras) for some (possibly uncountable) index set $\Lambda$.
\end{corollary}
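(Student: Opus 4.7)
The plan is to obtain this corollary as an essentially immediate consequence of Theorem~\ref{Naimark-thm}. Given the hypothesis that $A = L_k(E)$ is a Leavitt path algebra for which any two irreducible representations are algebraically equivalent, the pair $(E,k)$ satisfies condition~(1) of Theorem~\ref{Naimark-thm}. I would then invoke the implication chain $(1) \Rightarrow (4) \Rightarrow (5) \Rightarrow (7)$ proved in the theorem to conclude that there exists an index set $\Lambda$ with $L_k(E) \cong M_\Lambda(k)$ as $*$-algebras. Specializing~(7) to the given field $k$ yields $A \cong M_\Lambda(k)$, which is the desired conclusion.

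There is essentially no obstacle here, since all the work has already been done inside Theorem~\ref{Naimark-thm}. The only subtlety worth flagging is that the hypothesis of the corollary concerns a single specified field $k$, whereas the implication chain passes through the purely graph-theoretic condition~(5) (existence of a line point $v$ with $E^0 = \overline{T(v)}$) that makes no reference to any field. This field-independence is precisely what allows the uniqueness-of-irreducible-representation property for one field to produce the matrix algebra isomorphism not only for that same field but, as a bonus, for every other field simultaneously. In short, the proof amounts to a single application of Theorem~\ref{Naimark-thm}; any alternative approach attempting to bypass the graph-theoretic characterizations would presumably have to construct the matrix units in $L_k(E)$ directly from the hypothesis, and the content of Theorem~\ref{Naimark-thm} (specifically Lemma~\ref{ideal-of-line-point-matrix-and-compacts-lem}, which produces the matrix units from a line point) shows that locating such a line point is in effect unavoidable.
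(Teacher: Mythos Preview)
Your proposal is correct and matches the paper's approach: the corollary is stated in the paper without a separate proof, being an immediate consequence of Theorem~\ref{Naimark-thm} (specifically the implication $(1)\Rightarrow(7)$), exactly as you describe.
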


\begin{remark}
If a graph satisfies one (and hence all) of the equivalent conditions in the statement of Theorem~\ref{Naimark-thm}, that graph is downward directed and has no cycles, no infinite emitters, and at most one sink.  In addition, if the graph has a (unique) sink, then the graph has no infinite paths and every vertex in the graph can reach the sink.  If the graph has no sinks, then the graph has an infinite path containing a line point, and every vertex in the graph can reach a vertex on this infinite path.  
\end{remark}

\begin{remark}[Naimark's Problem for Associative Algebras]
It is well known that Naimark's Problem has an affirmative answer for any separable $C^*$-algebra \cite[Theorem~4]{Ros}.  In light of Theorem~\ref{Naimark-thm}, we see that the algebraic analogue of Naimark's problem has an affirmative answer for Leavitt path algebras.  One may wonder whether the same is true for any associative $k$-algebra whose dimension is countable.  (The countable dimension hypothesis is the algebraic analogue of the separability hypothesis for $C^*$-algebras.)  Specifically, one may ask

\noindent \emph{``If $A$ is an associative $k$-algebra of countable dimension, and $A$ has a unique irreducible representation up to algebraic equivalence, then is $A$ necessarily isomorphic to $M_\Lambda(k)$ for some indexing set $\Lambda$?"}

Interestingly, the answer to this question is ``No", and we describe a counterexample in Example~\ref{Cozzens-Kolchin-ex}.  This means, in particular, Theorem~\ref{Naimark-thm} is identifying a very special property of Leavitt path algebras among more general associative $k$-algebras, even when $L_k(E)$ has countable dimension or $E$ is a countable graph.
\end{remark}

\begin{example}[An Example of Cozzens and Kolchin] \label{Cozzens-Kolchin-ex}
Recall that a derivation $D$ on a ring $R$ is a function $D : R \to R$ satisfying $D(r_1 + r_2) = Dr_1 + Dr_2$ and $D(r_1r_2) = (Dr_1)r_2 + r_1D(r_2)$ for all $r_1, r_2 \in R$.    If $k$ is a field with derivation $D$, we let $k[y,D]$ denote the ring of differential polynomials in the indeterminate $y$ with coefficients in $k$; i.e., the additive group of $k[y,D]$ is the additive group of the ring of polynomials in the indeterminate $y$ with coefficients in $k$, and the multiplication in $k[y,D]$ is determined by the relation $ya := ay + D(a)$ for all $a \in k$.

If $k$ is a field of characteristic $0$ and $D$ is a derivation of $k$, Kolchin has shown in \cite{Kol} that there exists a field extension $U_{(k,D)} \supseteq k$ and a derivation $\overline{D}$ of $U_{(k,D)}$ extending $D$ with the property that for any $n \in \N$ and any polynomial $p(X) \in U_{(k,D)} [X_1, \ldots, X_n, X_{n+1}] \setminus U_{(k,D)} $ the equation
$$p(x, \overline{D}(x), \ldots, \overline{D}^{(n)} (x) ) = 0$$
has a solution $\xi \in U_{(k,D)}$.  Furthermore, every homogeneous linear differential equation in $\overline{D}$ over $U_{(k,D)}$ has a nontrivial solution in $U_{(k,D)}$.  Such a field $U_{(k,D)}$ is called a \emph{universal extension of $k$ with respect to $D$} or a \emph{universal differential field}.

Suppose $k$ is a universal differential field with derivation $D$, and define $A := k[y,D]$.  Then $A$ is a $k$-algebra with a countable basis $\{ 1, y, y^2,, \ldots \}$.  Cozzens has shown in \cite[Theorem~1.4(e)]{Coz} that $A$ has, up to isomorphism, a unique simple right $A$-module; i.e., any two irreducible representations of $A$ are algebraically equivalent.  

Cozzens has also shown in \cite[Theorem~1.4(a)(d)]{Coz} that $A$ has no zero divisors and $A$ is not a division ring.  As such, $A$ is not isomorphic to $M_\Lambda (k)$ for any index set $\Lambda$.  Consequently, $A$ is an example of a $k$-algebra with countable infinite dimension and a unique irreducible representation up to algebraic equivalence that is not isomorphic to $M_\Lambda (k)$ for any index set $\Lambda$.  
\end{example}

\section{Graph $C^*$-algebras with a Finite or Countably Infinite Number of Unitary Equivalence Classes of Irreducible Representations} \label{countably-many-repns-sec}

In this section we extend our investigations on Naimark's problem to consider graph $C^*$-algebras with a finite or countably infinite number of equivalence classes of irreducible representations.

\begin{remark} \label{extension-restriction-rem}
For a $C^*$-algebra $A$, we let $\widehat{A}$ denote the collection of unitary equivalence classes of irreducible representations, and we follow the convention of writing $\pi \in A$ and letting $\pi$ denote both the unitary equivalence class and the choice of an irreducible representation that is a representative of that equivalence class.  Recalling that two unitarily equivalent representations have equal kernels, we observe that for any subset $S \subseteq A$ it is well-defined to discuss whether $\pi (S) = 0$ or $\pi (S) \neq 0$.   With this in mind, for an ideal $I$ of $A$ we define
$$\widehat{A}^I := \{ \pi \in \widehat{A} : \pi (I) \neq 0 \} \qquad \text{ and } \qquad \widehat{A}_I := \{ \pi \in \widehat{A} : \pi (I) = 0 \}.$$

If $I$ is an ideal of $A$, we let $q_I : A \to A/I$ denote the quotient map determined by $I$.  If $\pi$ is a representation of $A$ with $\pi (I) \neq 0$, we let $\pi|_I$ denote the restriction of $\pi$ to $I$.  If $\pi$ is a representation of $A$ with $\pi (I) = 0$, we let $\pi'$ denote the representation of $A/I$ having the property that $\pi' \circ q_I = \pi$, which is provided by the first isomorphism theorem.

It is a standard fact, and a statement and proof using our notation can be found in \cite[Ch.2, \S11, Proposition~2.11.2] {Dix}, that whenever $A$ is a $C^*$-algebra and $I$ is an ideal of $A$,
\begin{itemize}
\item[(a)] there is a bijection from $\widehat{A}^I$ onto $\widehat{I}$ given by $\pi \mapsto \pi|_I$, and
\item[(b)] there is a bijection from $\widehat{A}_I$ onto $\widehat{ (A/I)}$ given by $\pi \mapsto \pi'$, where $\pi'$ is the unique representation of $A/I$ with $\pi' \circ q_I = \pi$.
\end{itemize}

Implicit in results (a) and (b) are a number of facts:  The result in (a) contains the fact that if $\pi$ is an irreducible representation of $A$ with $\pi(I) \neq 0$, then the restriction $\pi |_I$ is an irreducible representation of $I$, the surjectivity in (a) implies that any (nonzero) irreducible representation on $I$ can be extended to an irreducible representation of $A$, and the injectivity in (a) implies this extension is unique up to unitary equivalence. 

The result in (b) contains the fact that if $\pi$ is an irreducible representation of $A$ that vanishes on $I$, then the induced representation $\pi'$ on $A / I$ (for which $\pi' \circ q_I = \pi$) is an irreducible representation of $A/I$, the surjectivity in (b) implies that for any irreducible representation $\pi$ of $A/I$, the composition $\pi \circ q_I$ is an irreducible representation of $A$, and the injectivity in (b) implies that if $\pi_1$ and $\pi_2$ are irreducible representations of $A/I$, then unitary equivalence of $\pi_1 \circ q_I$ and $\pi_2 \circ q_I$ implies unitary equivalence of $\pi_1$ and $\pi_2$.

Finally, we note that since $\widehat{A} = \widehat{A}^I \sqcup \widehat{A}_I$, we have $|\widehat{A}| = |\widehat{A}^I | + | \widehat{A}_I |$, and applying (a) and (b) above we have
\begin{equation} \label{hat-A-equals-sum-eq}
|\widehat{A}| = |\widehat{I}| + |\widehat{A/I}| .
\end{equation}

It is shown in \cite[Theorem~5.1]{RT} that for any admissible pair $(H,S)$ the ideal $\mathcal{I}_{(H,S)}$ is isomorphic to a graph $C^*$-algebra.  In particular, we shall need the explicit description of $\mathcal{I}_{(H, \emptyset)}$ as a graph $C^*$-algebra.

For a saturated hereditary subset $H$ of a graph $E$, we define
$$F (H,\emptyset):= \{ \alpha \in E^* : \alpha = e_1 \ldots e_n \text{ with } r(e_n) \in H \text{ and } s(e_n) \notin H \}.$$
We also let $\overline{F}_1 (H,\emptyset)$ denote another copy of $F_1 (H,\emptyset)$ and write $\overline{\alpha}$ for the copy of $\alpha$ in $F_1 (H,\emptyset)$.  We then define $\overline{E}_{(H,\emptyset)}$ to be the graph with
\begin{align*}
\overline{E}_{(H,\emptyset)}^0 &:= H \cup F_1 (H,\emptyset) \\
\overline{E}_{(H,\emptyset)}^1 &:= \{ e \in E^1 : s(e) \in H \} \cup \overline{F}_1 (H,\emptyset)
\end{align*}
and we extend $r$ and $s$ to $\overline{E}_{(H,\emptyset)}^1$ by defining $s(\overline{\alpha}) := \alpha$ and $r(\overline{\alpha}) := r(\alpha)$.  It is shown in \cite[Theorem~5.1]{RT} that $\mathcal{I}_{(H,\emptyset)}$ is isomorphic to $C^*( \overline{E}_{(H,\emptyset)} )$.  (In fact, \cite[Theorem~5.1]{RT} gives a description of $\overline{E}_{(H,S)}$ for any admissible pair $(H,S)$ and shows $\mathcal{I}_{(H,S)}$ is isomorphic to $C^*( \overline{E}_{(H,S)} )$, but we shall only need the description for $S = \emptyset$.)
\end{remark}

The following may be considered a graph-theoretic analogue of \eqref{hat-A-equals-sum-eq} from Remark~\ref{extension-restriction-rem}.  Recall the definition of the graph $E \setminus (H, \emptyset)$ can be found in Remark~\ref{ideals-quotients-rem}.

\begin{lemma} \label{st-classes-ideal-quotient-lem}
Let $E = (E^0, E^1, r, s)$ be a graph and let $H \subseteq E^0$ be a saturated hereditary subset of vertices.  Then $| \widetilde{\partial} E | = | \widetilde{\partial} \, \overline{E}_{(H,\emptyset)} | + | \widetilde{\partial} \, (E \setminus (H, \emptyset)) |$.
\end{lemma}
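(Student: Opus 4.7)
The plan is to partition $\partial E$ into two subsets that are each invariant under shift-tail equivalence, and biject each piece onto the boundary of one of the two graphs $\overline{E}_{(H,\emptyset)}$ and $E\setminus(H,\emptyset)$ in a manner respecting the equivalence.  Call $\alpha\in\partial E$ \emph{$H$-entering} if some vertex of $\alpha$ lies in $H$, and \emph{$H$-avoiding} otherwise.  Because $H$ is hereditary, once a boundary path visits $H$ all subsequent vertices remain in $H$, so the shift map preserves the type of a boundary path; hence each shift-tail equivalence class of $\partial E$ lies entirely in one of the two subsets, and it suffices to count them separately.

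For $H$-entering paths, I would use the canonical factorization $\alpha=\mu\beta$, where $\mu$ is either empty (in the case $s(\alpha)\in H$) or is the unique initial segment $\mu=e_1\ldots e_n\in F_1(H,\emptyset)$ satisfying $r(e_n)\in H$ and $s(e_n)\notin H$, and $\beta$ is a boundary path with $s(\beta)\in H$.  The rule $\alpha\mapsto\beta$ (if $\mu$ is empty) or $\alpha\mapsto\overline{\mu}\beta$ (otherwise) yields a bijection from the $H$-entering paths of $E$ onto $\partial\,\overline{E}_{(H,\emptyset)}$, because every boundary path of $\overline{E}_{(H,\emptyset)}$ has source in $H\cup F_1(H,\emptyset)$, and a path sourced at $\mu\in F_1(H,\emptyset)$ must begin with the single outgoing edge $\overline{\mu}$ and then continue in $H$.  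Since the vertices of $H$ have identical edge sets in $E$ and in $\overline{E}_{(H,\emptyset)}$, the shifts inside $H$ coincide in both graphs; and applying the shift $|\mu|$ times in $E$ to $\mu\beta$ and once in $\overline{E}_{(H,\emptyset)}$ to $\overline{\mu}\beta$ both produce $\beta$.  Hence the bijection descends to shift-tail equivalence classes.

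For $H$-avoiding paths, I would biject such an $\alpha\in\partial E$ with a boundary path of $E\setminus(H,\emptyset)$ as follows.  If $\alpha$ is infinite, or finite ending at a singular vertex $v_0\notin B_H$, send $\alpha$ to itself, viewed as a path of $E\setminus(H,\emptyset)$.  If $\alpha=e_1\ldots e_n$ ends at $v_0\in B_H$, replace the terminal edge $e_n$ by its copy $f_{e_n}$, so that the image $e_1\ldots e_{n-1}f_{e_n}$ ends at the sink $w_{v_0}$; in the degenerate length-zero case $\alpha=v_0$, send $\alpha$ to $w_{v_0}$.  The main technical point, which I expect to be the principal obstacle, is to prove this rule is bijective, which reduces to identifying the singular vertices of $E\setminus(H,\emptyset)$: they are exactly the sinks $w_v$ for $v\in B_H$ together with the vertices of $E^0\setminus H$ that are singular in $E$ and lie outside $B_H$.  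This rests on the observation that if $v\in E^0\setminus H$ is singular in $E\setminus(H,\emptyset)$, then the $E$-edges from $v$ into $E^0\setminus H$ are either absent or infinite in number (a finite positive count would place $v$ in $B_H$ and, via the duplicated edges $f_e$, make $v$ regular in $E\setminus(H,\emptyset)$); from this one deduces $v$ is singular in $E$ and not in $B_H$, and also that saturation of $H$ rules out all other pathological cases.  Once the bijection is in place, the shifts on both sides agree under the correspondence, so it descends to shift-tail equivalence classes.

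Combining the two bijections on classes yields $|\widetilde{\partial} E|=|\widetilde{\partial}\,\overline{E}_{(H,\emptyset)}|+|\widetilde{\partial}\,(E\setminus(H,\emptyset))|$, as desired.
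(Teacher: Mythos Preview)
Your proposal is correct and follows essentially the same approach as the paper: partition $\partial E$ into boundary paths that enter $H$ and those that avoid $H$, then biject the two pieces with $\partial\,\overline{E}_{(H,\emptyset)}$ and $\partial\,(E\setminus(H,\emptyset))$ respectively in a shift-tail-equivalence-preserving way. Your treatment is in fact more careful than the paper's on a couple of points---you work out the bijections explicitly at the level of individual boundary paths and correctly identify the singular vertices of $E\setminus(H,\emptyset)$, whereas the paper argues more informally at the level of equivalence classes and somewhat loosely says breaking vertices are ``replaced by'' sinks.
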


\begin{proof}
To begin note that when forming $E \setminus (H, \emptyset)$, any breaking vertex in $B_H$ (which is an infinite emitter in $E$) is replaced by (and hence may be identified) with a sink in $E \setminus (H, \emptyset)$.  Furthermore, any boundary path in $E$ ending at a breaking vertex in $B_H$ corresponds bijectively to a boundary path in $E \setminus (H, \emptyset)$ ending at the corresponding sink.

Since $H$ is hereditary, we see that any boundary path in $E$ either eventually enters $H$ and stays in $H$, or is entirely outside $H$.  If a boundary path is entirely outside of $H$, then (using the identification of breaking vertices in $E$ with sinks in $E \setminus (H, \emptyset)$ described in the precious paragraph) we have a bijective correspondence between shift-tail equivalence classes of boundary paths of $E$ outside of $H$ with shift-tail equivalence classes of boundary paths in $E \setminus (H, \emptyset)$.

On the other hand, if a boundary path in $E$ enters $H$ and stays in $H$, then by truncating the initial portion of the boundary path we obtain a boundary path in $\overline{E}_{(H,\emptyset)}$.  We also see that this assignment preserves shift-tail equivalence; that is, two such boundary paths in $E$ are shift-tail equivalent in $E$ if and only if the truncated boundary paths in $\overline{E}_{(H,\emptyset)}$ are shift-tail equivalent in $\overline{E}_{(H,\emptyset)}$.  In addition, every boundary path in $\overline{E}_{(H,\emptyset)}$ is shift tail equivalent to the path entirely in $H$ by (if necessary) truncating the path's initial edge outside $H$.  Thus this assignment is a bijection, and we have a bijective correspondence between shift-tail equivalence classes of boundary paths in $E$ that enter and stay in $H$ with the shift-tail equivalence classes of boundary paths in $\overline{E}_{(H,\emptyset)}$.

Hence the shift-tail equivalence classes of boundary paths in $E$ that enter $H$ (and thus stay in $H$) are in bijective correspondence with shift-tail equivalence classes of boundary paths of $\overline{E}_{(H,\emptyset)}$, and the shift-tail equivalence classes of boundary paths in $E$ that stay outside of $H$ are in bijective correspondence with shift-tail equivalence classes of boundary paths of $E \setminus (H, \emptyset)$.  Consequently, we may conclude $| \widetilde{\partial} E | = | \widetilde{\partial} \, \overline{E}_{(H,\emptyset)} | + | \widetilde{\partial} \, (E \setminus (H, \emptyset)) |$. 
\end{proof}

We shall consider composition series for $C^*$-algebras indexed by a segment of ordinal numbers.  In the following definition recall that a \emph{limit ordinal} is an ordinal number that does not have an immediate predecessor.  (Also, note that we adopt the convention that the ordinal number $0$, not having an immediate predecessor, is a limit ordinal.)
 
\begin{definition}
A \emph{composition series} for a $C^*$-algebra $A$ consists of a strictly increasing family of closed two-sided ideals $\{ I_\gamma \}_{0 \leq \gamma \leq \delta}$ indexed by a segment $\{ 0 \leq \gamma \leq \delta \}$ for some ordinal number $\delta$ that satisfies
\begin{itemize}
\item[(i)] $I_0 = \{ 0 \}$ and $I_\delta = A$,
\item[(ii)] $I_{\gamma_1} \subsetneq I_{\gamma_2}$ whenever $0 \leq \gamma_1 < \gamma_2 \leq \delta$, and
\item[(iii)] for each limit ordinal $\epsilon \leq \delta$ we have
$$I_\epsilon = \overline{ \bigcup_{\gamma < \epsilon} I_\gamma}.$$
\end{itemize}
We often write 
$$\{ 0 \} = I_0 \subsetneq I_1 \subsetneq \ldots \subsetneq I_\delta = A$$
to denote a composition series.  Given such a composition series, we say that the composition series has \emph{length} equal to the cardinality of $\delta$.  Also, when $\gamma$ is an ordinal that is not a limit ordinal, we call the quotient $I_\gamma / I_{\gamma-1}$ the $\gamma$\textsuperscript{th} factor of the composition series.  Note that a composition series of length $\delta$ has one factor for each non-limit ordinal less than or equal to $\delta$, and hence the cardinality of the set of factors is equal to the cardinality of $\delta$.

Recall that a $C^*$-algebra is called an \emph{elementary $C^*$-algebra} if it is isomorphic to $\K(\Hi)$ for some Hilbert space $\Hi$.  With this terminology in mind, we say that a composition series is an \emph{elementary composition series} if each factor in the composition series is an elementary $C^*$-algebra; that is, if for each ordinal $\gamma \leq \delta$ that is not a limit ordinal there exists a Hilbert space $\Hi_\gamma$ such that $I_{\gamma} / I_{\gamma-1} \cong \K (\Hi_\gamma)$.
\end{definition}

\begin{lemma} \label{elementary-comp-series-gives-spectrum-lem}
Suppose $A$ is a $C^*$-algebra with an elementary composition series $\{ I_\gamma \}_{0 \leq \gamma \leq \delta}$.  Then for each ordinal $\gamma \leq \delta$ that is not a limit ordinal, there exists an irreducible representation $\pi_\gamma$ of $A$ satisfying the following:
\begin{itemize}
\item[(a)] If $\gamma_1$ and $\gamma_2$ are distinct non-limit ordinals less than or equal to $\delta$, then $\pi_{\gamma_1}$ and $\pi_{\gamma_2}$ are not unitarily equivalent.  
\item[(b)] If $\pi$ is any irreducible representation of $A$ and $\gamma$ is the smallest ordinal such that $\pi (I_\gamma) \neq 0$, then $\gamma$ is not a limit ordinal and $\pi$ is unitarily equivalent to $\pi_\gamma$.
\end{itemize}
Consequently, 
$$ \widehat{A} = \{ \pi_\gamma : \text{$0 \leq \gamma \leq \delta$ and $\gamma$ is not a limit ordinal} \}$$
and the cardinality of $\widehat{A}$ is equal to the length of the composition series (i.e., the cardinality of $\delta$).
\end{lemma}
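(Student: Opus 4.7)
The plan is to construct each $\pi_\gamma$ by pulling back the unique irreducible $*$-representation of the elementary factor $I_\gamma / I_{\gamma-1} \cong \K(\Hi_\gamma)$ through the two bijections recorded in Remark~\ref{extension-restriction-rem}. For each non-limit ordinal $\gamma \leq \delta$, let $\sigma_\gamma$ denote the (unique up to unitary equivalence) irreducible representation of $\K(\Hi_\gamma)$ on $\Hi_\gamma$. Applying Remark~\ref{extension-restriction-rem}(b) to the ideal $I_{\gamma-1}$ of the $C^*$-algebra $I_\gamma$, I would obtain an irreducible representation $\tau_\gamma$ of $I_\gamma$ vanishing on $I_{\gamma-1}$, and then applying Remark~\ref{extension-restriction-rem}(a) to the ideal $I_\gamma$ of $A$, I would extend $\tau_\gamma$ to an irreducible representation $\pi_\gamma$ of $A$ with $\pi_\gamma|_{I_\gamma} = \tau_\gamma$. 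By construction, $\pi_\gamma(I_\gamma) \neq 0$ while $\pi_\gamma(I_{\gamma-1}) = 0$.

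For part~(a), if $\gamma_1 < \gamma_2$ are non-limit ordinals with $\gamma_2 \leq \delta$, then $\gamma_1 \leq \gamma_2 - 1$ gives $I_{\gamma_1} \subseteq I_{\gamma_2-1}$, so $\pi_{\gamma_2}(I_{\gamma_1}) = 0$, while $\pi_{\gamma_1}(I_{\gamma_1}) \neq 0$. Since unitarily equivalent representations have the same kernel, $\pi_{\gamma_1}$ and $\pi_{\gamma_2}$ cannot be unitarily equivalent.

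For part~(b), let $\pi$ be any irreducible representation of $A$ and let $\gamma$ be the smallest ordinal with $\pi(I_\gamma) \neq 0$. The main obstacle here is ruling out that $\gamma$ is a limit ordinal, and the key tool is the continuity of the $*$-homomorphism $\pi$ together with the defining property $I_\gamma = \overline{\bigcup_{\beta < \gamma} I_\beta}$: by minimality of $\gamma$, one has $\pi(I_\beta) = \{0\}$ for every $\beta < \gamma$, so $\pi$ vanishes on $\bigcup_{\beta<\gamma}I_\beta$, and continuity forces $\pi(I_\gamma) = \{0\}$, contradicting the choice of $\gamma$. Hence $\gamma$ is a successor. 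Then $\pi|_{I_\gamma}$ is an irreducible representation of $I_\gamma$ vanishing on $I_{\gamma-1}$ (by Remark~\ref{extension-restriction-rem}(a) this restriction is irreducible), which descends through Remark~\ref{extension-restriction-rem}(b) to an irreducible representation of $I_\gamma/I_{\gamma-1} \cong \K(\Hi_\gamma)$; by the classical uniqueness of irreducible representations of the compacts, this descended representation is unitarily equivalent to $\sigma_\gamma$. The injectivity clauses of Remark~\ref{extension-restriction-rem}(a) and~(b) then propagate the equivalence back up the construction, first yielding $\pi|_{I_\gamma}$ unitarily equivalent to $\tau_\gamma$, and consequently $\pi$ unitarily equivalent to $\pi_\gamma$.

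Parts~(a) and~(b) together exhibit $\gamma \mapsto \pi_\gamma$ as a bijection from the non-limit ordinals in $(0,\delta]$ onto $\widehat{A}$. For any infinite ordinal $\delta$, the successor ordinals $\leq \delta$ form a set of the same cardinality as $\delta$ (via $\gamma \mapsto \gamma+1$), while for finite $\delta$ the count is immediate; in either case $|\widehat{A}|$ equals the length of the composition series, completing the proof.
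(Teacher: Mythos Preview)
Your proof is correct and follows essentially the same approach as the paper: construct $\pi_\gamma$ by lifting the unique irreducible representation of the elementary factor $I_\gamma/I_{\gamma-1}$ through the quotient and then extending to $A$ via Remark~\ref{extension-restriction-rem}, distinguish the $\pi_\gamma$'s by their behavior on the ideals $I_{\gamma}$, and for an arbitrary irreducible $\pi$ use continuity to rule out limit ordinals before descending to the factor and invoking uniqueness for the compacts. Your justification of the final cardinality claim (via the bijection $\gamma \mapsto \gamma+1$) is slightly more explicit than the paper's, but otherwise the arguments coincide.
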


\begin{proof}
Because the composition series is elementary, for each ordinal $\gamma \leq \delta$ that is not a limit ordinal, we have $I_{\gamma} / I_{\gamma-1} \cong \K (\Hi_\gamma)$ for some Hilbert space $\Hi_\gamma$.  For each such $\gamma$ let $\psi_\gamma : I_{\gamma} / I_{\gamma-1} \to \B (\Hi_\gamma)$ be an irreducible representation and let $q_\gamma : I_\gamma \to I_{\gamma} / I_{\gamma-1}$ be the quotient map.  Then $\psi_\gamma \circ q_{\gamma-1} :  I_{\gamma} \to \B (\Hi_{\gamma})$ is an irreducible representation, and hence (see Remark~\ref{extension-restriction-rem}(a)) there exists an irreducible representation $\pi_\gamma : A \to \B (\Hi_\gamma)$ with the property that $\pi_\gamma|_{I_\gamma} = \psi_\gamma \circ q_{\gamma -1}$.  This establishes the existence of the representation $\pi_\gamma$ for each $\gamma \leq \delta$ such that $\gamma$ is not a limit ordinal.

In addition, note that $\ker \pi_\gamma = I_{\gamma - 1}$.  Thus distinct $\pi_\gamma$'s have distinct kernels, and the $\pi_\gamma$'s are pairwise not unitarily equivalent.  This establishes the claim in (a).

Furthermore, let $\pi : A \to \B (\Hi)$ be any irreducible representation of $A$.  Since $\pi$ is nonzero, we have $\pi(I_\delta) = \pi(A) \neq 0$, and we may define $\gamma$ to be the least ordinal less than or equal to $\delta$ such that $\pi( I_\gamma) \neq 0$.  Observe that $\gamma$ cannot be a limit ordinal, because if it were we would have $\pi(I_\eta) = 0$ for all $\eta < \gamma$, implying $\pi (I_\gamma) = \pi (\overline{\bigcup_{\eta < \gamma} I_\eta }) = 0$, which is a contradiction.  Since $\pi (I_\gamma) \neq 0$ and $I_\gamma$ is an ideal of $A$, the restriction $\pi |_{I_\gamma} : I_\gamma \to \B (\Hi)$ is an irreducible representation of $I_\gamma$ (see Remark~\ref{extension-restriction-rem}(a)).  In addition, since $\gamma$ is the least ordinal with $\pi (I_\gamma) \neq 0$ and $\gamma$ is not a limit ordinal, we conclude $\pi (I_{\gamma-1}) = 0$.   Thus $\pi$ induces an irreducible representation $\widetilde {\pi} : I_\gamma / I_{\gamma - 1} \to \B (\Hi)$ by $\widetilde{\pi} (a + I_{\gamma -1} ) := \pi(a)$.  In particular, $\pi |_{I_\gamma} = \widetilde{\pi} \circ q_\gamma$.  Since any two irreducible representations of $I_\gamma / I_{\gamma - 1} \cong \K(\Hi_\gamma)$ are unitarily equivalent, we conclude $\widetilde{\pi}$ is unitarily equivalent to $\psi_\gamma$.  It follows that $\pi  |_{I_\gamma} = \widetilde{\pi} \circ q_{\gamma-1}$ is unitarily equivalent to $\psi_\gamma \circ q_{\gamma -1}$.  Furthermore, $\pi_\gamma : A \to \B (\Hi_\gamma)$ is an irreducible representation constructed so that $\pi_\gamma |_{I_\gamma} = \psi_\gamma \circ q_{\gamma -1}$.  Hence $\pi  |_{I_\gamma}$ is unitarily equivalent to $\pi_\gamma |_{I_\gamma}$, and it follows (see Remark~\ref{extension-restriction-rem}(b)) that $\pi$ is unitarily equivalent to $\pi_\gamma$.  This establishes the claim in (b).

Since the $\pi_\gamma$'s are pairwise not unitarily equivalent and every irreducible representation of $A$ is unitarily equivalent to some $\pi_\gamma$, it follows that 
$$ \widehat{A} = \{ \pi_\gamma : \text{$0 \leq \gamma \leq \delta$ and $\gamma$ is not a limit ordinal} \}$$
and the cardinality of $\widehat{A}$ is equal to the length of the composition series.
\end{proof}

Recall that in the following theorem, the term ``countable" means either finite or countably infinite.

\begin{theorem} \label{countable-infinite-irrep-st-equiv-comp-thm}
If $E = (E^0, E^1, r, s)$ is a graph, then the following are equivalent.
\begin{itemize}
\item[(1)] The collection of unitary equivalence classes of irreducible representations of $C^*(E)$ is countable.
\item[(2)] $E$ has no cycles and the collection of shift-tail equivalence classes of boundary paths of $E$ is countable. 
\item[(3)] $C^*(E)$ has an elementary composition series of countable length.
\end{itemize}
Moreover, when one (and hence all) of the above hold, $C^*(E)$ is a Type~I AF-algebra, $| \widehat{C^*(E)} | = |  \widetilde{\partial} E |$, and
$$\widehat{C^*(E)} = \{ \pi_{ [\alpha] } : [ \alpha ] \in \widetilde{\partial} E \},$$ where $\pi_{[\alpha]}$ is the representation defined in Proposition~\ref{representation-exists-prop}, Lemma~\ref{inv-subspace-with-basis-elt-all-lem}, and Notation~\ref{rep-for-st-class-not}.
\end{theorem}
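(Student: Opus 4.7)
The plan is to establish the cycle of implications (1) $\Rightarrow$ (2) $\Rightarrow$ (3) $\Rightarrow$ (1) and then derive the additional conclusions from the composition series produced in (3).

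The implications (1) $\Rightarrow$ (2) and (3) $\Rightarrow$ (1) are essentially immediate from earlier results. For (1) $\Rightarrow$ (2), Lemma~\ref{C-star-countable-no-cycles-lem} rules out cycles, and Corollary~\ref{number-STE-algebraic-unitary-equiv-classes-cor}(1) gives $|\widetilde{\partial} E| \le |\widehat{C^*(E)}|$, which is countable. For (3) $\Rightarrow$ (1), Lemma~\ref{elementary-comp-series-gives-spectrum-lem} identifies $|\widehat{C^*(E)}|$ with the length of the composition series, which is countable by hypothesis.

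The main work is (2) $\Rightarrow$ (3), which I would carry out by transfinite recursion, building a composition series $\{\mathcal{I}_\gamma\}_{0\le\gamma\le\delta}$ as follows. Set $\mathcal{I}_0:=\{0\}$, and at each limit ordinal $\epsilon$ put $\mathcal{I}_\epsilon:=\overline{\bigcup_{\gamma<\epsilon}\mathcal{I}_\gamma}$. At a successor stage, given $\mathcal{I}_\gamma=\mathcal{I}_{(H_\gamma,S_\gamma)}\subsetneq C^*(E)$, form the quotient $C^*(E)/\mathcal{I}_\gamma\cong C^*(F_\gamma)$ with $F_\gamma:=E\setminus(H_\gamma,S_\gamma)$. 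The quotient graph $F_\gamma$ inherits the hypotheses of (2): it has no cycles (quotienting cannot create cycles) and only countably many shift-tail classes (via an extension of Lemma~\ref{st-classes-ideal-quotient-lem} to admissible pairs with $S\neq\emptyset$, proved by the same argument). Hence Lemma~\ref{ste-implies-line-point-lem} supplies a line point $v_\gamma$ of $F_\gamma$, and Lemma~\ref{ideal-of-line-point-matrix-and-compacts-lem} shows that the ideal $\mathcal{I}_{T(v_\gamma)}$ in $C^*(F_\gamma)$ is isomorphic to $\K(\Hi_{\gamma+1})$ for some Hilbert space; pulling this ideal back through the quotient map defines $\mathcal{I}_{\gamma+1}\supsetneq\mathcal{I}_\gamma$ with elementary factor $\mathcal{I}_{\gamma+1}/\mathcal{I}_\gamma\cong\K(\Hi_{\gamma+1})$. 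Termination at a countable ordinal is forced because the line point $v_\gamma$ contributes exactly one shift-tail class in $F_\gamma$ (the class of the line emanating from $v_\gamma$), so each successor step consumes at least one class from a countable supply. The hard part will be carefully controlling the bookkeeping of admissible pairs as breaking vertices accumulate, and confirming that the inheritance in Lemma~\ref{st-classes-ideal-quotient-lem} really does extend to pairs with $S\neq\emptyset$.

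For the ``moreover'' part, each composition factor $\K(\Hi_\gamma)$ is Type~I, and Type~I is closed under extensions and inductive limits, so $C^*(E)$ is Type~I; the AF conclusion follows from the standard fact that $C^*(E)$ is AF whenever $E$ has no cycles. The equality $|\widehat{C^*(E)}|=|\widetilde{\partial} E|$ combines Corollary~\ref{number-STE-algebraic-unitary-equiv-classes-cor}(1) with Lemma~\ref{elementary-comp-series-gives-spectrum-lem} and the one-class-per-step count described above. To identify the spectrum explicitly with $\{\pi_{[\alpha]}\}$, I would associate to each line point $v_\gamma$ a boundary path $\alpha_\gamma\in\partial E$ obtained by extending the line of $v_\gamma$ inside the original graph, and verify that $\pi_{[\alpha_\gamma]}$ annihilates $\mathcal{I}_\gamma$ but not $\mathcal{I}_{\gamma+1}$; Lemma~\ref{elementary-comp-series-gives-spectrum-lem}(b) then forces $\pi_{[\alpha_\gamma]}$ to be unitarily equivalent to the representation $\pi_{\gamma+1}$ produced in that lemma. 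Since every irreducible $*$-representation of $C^*(E)$ is unitarily equivalent to some $\pi_{\gamma+1}$ by Lemma~\ref{elementary-comp-series-gives-spectrum-lem}(b), and since the $\pi_{[\alpha]}$'s are pairwise inequivalent by Theorem~\ref{decomposition-thm}(b), this exhausts $\widehat{C^*(E)}$ and yields the claimed bijection.
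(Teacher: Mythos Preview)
Your proposal is correct and follows essentially the same route as the paper: the same reductions for (1) $\Rightarrow$ (2) and (3) $\Rightarrow$ (1), the same transfinite construction of an elementary composition series via line points in the successive quotient graphs for (2) $\Rightarrow$ (3), and the same identification of each $\pi_\gamma$ with a boundary-path representation for the ``moreover'' clause. The only cosmetic differences are that the paper handles countability of shift-tail classes in $E\setminus(H_\gamma,S_\gamma)$ directly (observing these classes biject with those of the subgraph $E\setminus H_\gamma$ of $E$) rather than by extending Lemma~\ref{st-classes-ideal-quotient-lem}, and deduces Type~I from the countable spectrum rather than from closure under extensions and limits.
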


\begin{proof}
(1) $\implies$ (2): Since $C^*(E)$ has a countable number of unitary equivalence classes of irreducible representations, it follows from Lemma~\ref{C-star-countable-no-cycles-lem} that $E$ has no cycles.  In addition, Corollary~\ref{number-STE-algebraic-unitary-equiv-classes-cor}(1) implies that the cardinality of the shift-tail equivalence classes of boundary paths of $E$ is countable. 

(2) $\implies$ (3): Suppose $E$ is a graph with no cycles and a countably infinite number of shift-tail equivalence classes.   We shall produce an elementary composition series via transfinite induction.  For the base case, Lemma~\ref{ste-implies-line-point-lem} implies $E$ contains a line point $w_1 \in E^0$.  Let $H_1 := T(w_1)$ and define $I_1 := \mathcal{I}_{H_1} = \mathcal{I}_{\overline{H_1} } = \mathcal{I}_{(\overline{H_1},\emptyset)}$.  By Lemma~\ref{ideal-of-line-point-matrix-and-compacts-lem}, $I_1 := \mathcal{I}_{( \overline{H_1},\emptyset)} \cong \K ( \Hi_1)$ for some Hilbert space $\Hi_1$.

For the inductive step, suppose $\gamma$ is an ordinal number for which an ideal $I_\gamma$ of $C^*(E)$ has been chosen.  Since $E$ has no cycles, all ideals in $C^*(E)$ are gauge-invariant and $I_\gamma = I_{(H_\gamma, S_\gamma)}$ for some saturated hereditary subset $H_\gamma \subseteq E^0$ and some set $S_\gamma \subseteq B_{H_\gamma}$ of breaking vertices.  Thus $C^*(E) / I_\gamma = C^*(E) /  \mathcal{I}_{(H_\gamma, S_\gamma)} \cong C^*( E \setminus (H_\gamma, S_\gamma))$.  The graph $E \setminus (H_\gamma, S_\gamma))$ consists of the subgraph $E \setminus H_\gamma$ of $E$ together with a source vertex and edges from that source vertex into the subgraph $E \setminus H_\gamma$ for each element of $B_{H_\gamma} \setminus S_\gamma$.  Consequently, each shift-tail equivalence class of $E \setminus (H_\gamma, S_\gamma)$ corresponds in a one-to-one manner to a shift-tail equivalence class of $E \setminus H_\gamma$.  Since $E \setminus H_\gamma$ is a subgraph of $E$, and since $E$ has a countable number of shift-tail equivalence classes, we may conclude that the subgraph $E \setminus H_\gamma$, and hence also the graph $E \setminus (H_\gamma, S_\gamma)$, has a countable number of shift-tail equivalence classes.

Since the graph $E \setminus (H_\gamma, S_\gamma)$ has a countable number of shift-tail equivalence classes, it follows from Lemma~\ref{ste-implies-line-point-lem} that $E \setminus (H_\gamma, S_\gamma)$ contains a line point $w_{\gamma+1}$, and moreover we can choose $w_{\gamma+1} \in E \setminus H_\gamma$.  Let $K := T(w_{\gamma+1})$ be the hereditary subset in $E \setminus (H_\gamma, S_\gamma)$ generated by $w_{\gamma+1}$, and let $J_{\gamma +1} := \mathcal{I}_{K} = \mathcal{I}_{\overline{K}} = \mathcal{I}_{(\overline{K}, \emptyset)}$ be the ideal in $C^*( E \setminus (H_\gamma, S_\gamma))$ corresponding to the admissible pair $(\overline{K}, \emptyset)$.  By Lemma~\ref{ideal-of-line-point-matrix-and-compacts-lem} $J_{\gamma +1} \cong \K(\Hi_{\gamma+1})$ for some Hilbert space $\Hi_{\gamma+1}$.  

Since $C^*( E \setminus (H_\gamma, S_\gamma)) \cong C^*(E) / I_\gamma$, with this identification we have  $J_{\gamma +1} = I_{\gamma+1} / I_\gamma$ for some ideal $I_{\gamma+1}$ of $C^*(E)$ with $I_\gamma \subsetneq I_{\gamma+1}$.  We define $I_{\gamma+1}$ as our ideal corresponding to the ordinal $\gamma+1$ and note that 
$$  I_{\gamma+1} / I_\gamma = J_{\gamma +1}  \cong \K(\Hi_{\gamma+1})$$
for the Hilbert space $\Hi_{\gamma+1}$.  

Finally, if $\epsilon$ is a limit ordinal and $I_\gamma$ has been defined for every ordinal $\gamma < \epsilon$, we define $I_\epsilon := \overline{ \bigcup_{\gamma < \epsilon} I_\gamma }$.

By transfinite induction, this construction produces an elementary composition series $\{ I_\gamma \}_{0 \leq \gamma \leq \delta}$ for $C^*(E)$.  All that remains is to show that this composition series is countably infinite.  If $\gamma$ is an ordinal less than or equal to $\delta$ and $\gamma$ is not a limit ordinal, observe that our construction produces a line point $w_\gamma \in E^0 \setminus H_\gamma$.  Moreover, for distinct $\gamma$'s the line points $w_\gamma$'s correspond to distinct shift-tail equivalence classes in $E$.  Hence, by our hypothesis in (2) that there $E$ has only countably many shift-tail equivalence classes, we conclude there are countably many non-limit ordinals less than $\delta$.  Consequently, $\delta$ is countable, and we have a countable elementary composition series.  

(3) $\implies$ (1):  If $C^*(E)$ has a countable elementary composition series, then Lemma~\ref{elementary-comp-series-gives-spectrum-lem} implies that $\widehat{C^*(E)}$ is countable.  Hence $C^*(E)$ has a countable number of unitary equivalence classes of irreducible representations.
 
This establishes the equivalence of (1) $\iff$ (2) $\iff$ (3).  It remains to show the final statements of the theorem hold in the presence of these conditions.
 
Consider the situation when one (and hence all) of Conditions (1)--(3) hold.  Since $E$ has no cycles by (2), $C^*(E)$ is an AF-algebra.  In addition, since the collection of unitary equivalence classes of irreducible representations $C^*(E)$ is countable from (1), we conclude $C^*(E)$ is a Type~I $C^*$-algebra.  Furthermore, suppose $\{ I_\gamma \}_{0 \leq \gamma \leq \delta}$ is the elementary composition series constructed in the proof of $(2) \implies (3)$.  By Lemma~\ref{elementary-comp-series-gives-spectrum-lem}
\begin{equation} \label{pi-gamma-collection-eq}
\widehat{C^*(E)} = \{ \pi_\gamma : \text{$0 \leq \gamma \leq \delta$ and $\gamma$ is not a limit ordinal} \}
\end{equation}
and if $\pi$ is any irreducible representation of $C^*(E)$ and $\gamma$ is the smallest ordinal such that $\pi (I_\gamma) \neq 0$, then $\gamma$ is not a limit ordinal and $\pi$ is unitarily equivalent to $\pi_\gamma$.

Suppose $\gamma \leq \delta$ is an ordinal that is not a limit ordinal.  As in the proof of $(2) \implies (3)$, we may write $I_{\gamma-1} = I_{ (H_{\gamma-1}, S_{\gamma-1})}$ for some admissible pair $(H_{\gamma-1}, S_{\gamma-1})$.  Also, as in the proof of $(2) \implies (3)$, there is a line point $w_{\gamma}$ in the graph $E \setminus (H_{\gamma-1}, S_{\gamma-1})$, and $I_\gamma$ is defined so $p_{w_\gamma} \in I_\gamma$.  Let $\alpha$ be the unique boundary path in $E \setminus (H_{\gamma-1}, S_{\gamma-1})$ beginning at $w_{\gamma}$, and note that $\alpha$ is also a boundary path in $E$.  Furthermore, the representation $\pi_{ [\alpha] } : C^*(E) \to \B (\Hi)$ has the property that $\pi (I_{\gamma - 1} ) = 0$ (since no vertex in $(H_{\gamma -1}, S_{\gamma-1})$ can reach $\alpha$), and $\pi (I_{\gamma } ) \neq 0$ (since $\pi_{ [\alpha]} (p_v) \alpha = \alpha$).  Consequently, $\gamma$ is the smallest smallest ordinal such that $\pi_{ [ \alpha ]}(I_\gamma) \neq 0$, and hence $\pi_{ [ \alpha ]}$ is unitarily equivalent to $\pi_\gamma$.  We thus conclude every representation in the collection of \eqref{pi-gamma-collection-eq} is unitarily equivalent to $\pi_{ [ \alpha ] }$ for some boundary path $\alpha$.  Hence
$$\widehat{C^*(E)} = \{ \pi_\gamma : \text{$0 \leq \gamma \leq \delta$ and $\gamma$ is not a limit ordinal} \} = \{ \pi_{ [\alpha] } : [ \alpha ] \in \widetilde{\partial} E \}.$$
From this it also follows that $| \widehat{C^*(E)} | = |  \widetilde{\partial} E |$.
\end{proof}

\begin{remark} \label{separable-implies-countable-comp-rem}
If $A$ is a separable $C^*$-algebra, then any composition series of $A$ is countable.  The proof of this is fairly easy:  If $\{ I_\gamma \}_{ 0 \leq \gamma \leq \delta}$ is a composition series, then for each $\gamma \leq \delta$, we may choose an element $a_\gamma \in I_\gamma$ such that $\| a_\gamma - a \| \geq 1$ for all $a \in I_{\gamma-1}$.  The collection $\{ a_\gamma : 0 \leq \gamma \leq \delta \}$ then has the property that $\| a_{\gamma_1} - a_{\gamma_1} \| \geq 1$.  Thus if $A$ is separable, the collection of $a_\gamma$'s must be countable, and $\delta$ must be countable.

As a consequence, if $E$ is a countable graph (so that $C^*(E)$ is a separable $C^*$-algebra), then Condition~(3) of Theorem~\ref{countable-infinite-irrep-st-equiv-comp-thm} may be replaced by the condition ``$C^*(E)$ has an elementary composition series" since any composition series of $C^*(E)$ automatically has countable length.
\end{remark}

\begin{remark}
It is worth noting that Theorem~\ref{countable-infinite-irrep-st-equiv-comp-thm} does not hold when one replaces the graph $C^*$-algebra $C^*(E)$ by the Leavitt path algebra $L_k(E)$ (for any choice of field $k$).  In particular, if $E$ is the graph

\medskip

$$\xymatrix{ \bullet \ar@(ul,ur)[] }$$
consisting of a single vertex and a single edge, then $L_k(E)$ is the $k$-algebras of Laurent polynomials $k[x^{-1}, x]$, which has a countable number of algebraic equivalence classes of irreducible representations.  However, $E$ has exactly one boundary path, and hence exactly one shift-tail equivalence class of boundary paths.  Hence the algebraic equivalence classes of irreducible representations of $L_k(E)$ are not in one-to-one correspondence with the shift-tail equivalence classes of boundary paths of $E$, and moreover, there are representations of $L_k(E)$ that are not algebraically equivalent to any representation $\rho_{ [\alpha ]}$ coming from a boundary path $\alpha$.  

There is a characterization of when a Leavitt path algebra has a countable number of algebraic equivalence classes of representations.  Specifically,  \cite[Theorem~3.4]{AR} shows that a Leavitt path algebra over a field $k$ has a countable number of algebraic equivalence classes of representations if and only if the Leavitt path algebra has a countable composition series whose factors are countable direct sums of matrix rings over $k$ or $k[x,x^{-1}]$.  However, \cite[Theorem~3.4]{AR} makes no assertions about boundary paths or shift-tail equivalence classes of boundary paths.
\end{remark}

In the following example, we apply Theorem~\ref{countable-infinite-irrep-st-equiv-comp-thm} to compute the spectrum of a graph $C^*$-algebra.

\begin{example}
Let $E$ be the following graph.
$$
\xymatrix{
 & v_1 \ar[r]^{e_2} & v_2 \ar[r]^{e_3} & v_3  & & \\
v_0 \ar[ru]^{e_1} \ar[r]^f & w_0 \ar@{=>}[r]^{(\infty) \ } & w_1 \ar[r]^{g_1} & w_2  \ar[r]^{g_2} & w_3  \ar[r]^{g_3}  & \cdots \\
x_1 \ar[r]^{h_1} & x_2 \ar[r]^{h_2} & x_3 \ar[ru]^{h_3} & & & &
}
$$
Then $E$ has three shift-tail equivalence classes of boundary paths, and the following three boundary paths are representative chosen from each of the three classes:
\begin{align*}
\alpha_1 &:= e_1 e_2 e_3 \\
\alpha_2 &:= f \\
\alpha_3 &:= h_1 h_2 h_3 g_2 g_3 g_4 g_5 \ldots
\end{align*}
Theorem~\ref{countable-infinite-irrep-st-equiv-comp-thm} implies $\widehat{C^*(E)}$ also has three elements, and moreover $$\widehat{C^*(E)} = \{ \pi_{ [\alpha_1 ] },  \pi_{ [\alpha_2 ] },  \pi_{ [\alpha_3 ] } \}.$$

In addition, we can find an elementary composition series for $C^*(E)$ as follows:  We see that $w_2$ is a line point, and $H_1 := \{x_1, x_2, x_3, w_1, w_2, \ldots \}$ is the saturated hereditary subset generated by $w_2$.  In addition, $w_0$ is a line point (and sink) in $E \setminus (H_1, \emptyset)$,  and $H_2 := \{w_0, x_1, x_2, x_3, w_1, w_2, \ldots \}$ is the saturated hereditary subset of $E$ generated by $\{ w_0 \} \cup H_1$.  Finally $v_1$ is a line point in $E \setminus (H_2, \emptyset)$, and $E^0$ is the saturated hereditary subset of $E$ generated by $\{ v_1 \} \cup H_2$.  Hence
$$ 0 \subsetneq I_{(H_1, \emptyset)} \subsetneq I_{(H_2, \emptyset)}  \subsetneq C^*(E)$$
is an elementary composition series for $C^*(E)$ of length three.
\end{example}

Theorem~\ref{countable-infinite-irrep-st-equiv-comp-thm} shows that when $E$ is a graph with no cycles the cardinality of the shift-tail equivalence classes of boundary paths of $E$ and the cardinality of the unitary equivalence classes of irreducible representations of $C^*(E)$ are either both countable and equal or are both uncountable.  Moreover, when $E$ contains a cycle, Lemma~\ref{C-star-countable-no-cycles-lem} implies that $C^*(E)$ has uncountably many unitary equivalence classes of irreducible representations; however, the following examples show that when $E$ has a cycle, the cardinality of the shift-tail equivalence classes of boundary paths can be finite, countable, or uncountable.  Specifically, the following example shows that when $E$ contains a cycle, these cardinalities can differ in a variety of ways.

\begin{example} \label{no-relation-ste-and-spectrum-ex}
Consider the following three graphs.

$$
\scalebox{.93}{
\xymatrix{
E_1 & \bullet \ar@(ul,ur)[] & & E_2 & \bullet \ar@(ul,ur)[] & \bullet \ar@(ul,ur)[] & \bullet \ar@(ul,ur)[]  & \hspace{-.3in} \cdots & & E_3  & \bullet \ar@(ul,ur)[]  \ar@(dr,dl)[] & \\
&& & & \bullet \ar[u] \ar[ur] \ar[urr]_{\cdots} & & &  & & &
} }
$$
Each graph has a cycle, so by Lemma~\ref{C-star-countable-no-cycles-lem} each of $C^*(E_1)$, $C^*(E_2)$, and $C^*(E_3)$ each have an uncountable number of unitary equivalence classes of irreducible representations.  However, $E_1$ has exactly one shift-tail equivalence class of boundary paths, $E_2$ has a countably infinite number of shift-tail equivalence classes of boundary paths, and (by Lemma~\ref{two-distinct-cycles-implies-uncountable}) $E_3$ has an uncountable number of shift-tail equivalence classes of boundary paths.  Thus, in general, when $E$ contains a cycle the unitary equivalence classes of $C^*(E)$ do not correspond to shift-tail equivalence classes of boundary paths of $E$.
\end{example}

\begin{theorem}[A Trichotomy for the Spectrum of Graph $C^*$-algebras] \label{trichotomy-spectrum-thm}
$ $ \\
Let $E = (E^0, E^1, r, s)$ be a graph.  Then exactly one of the following three mutually exclusive cases holds.
\begin{itemize}
\item[] \hspace{-.4in}\textsc{Case I:} $E$ has no cycles, both $\widehat{ C^*(E)}$ and $\widetilde{\partial} E$ are countable (i.e., finite or countably infinite) with the same cardinality and $\widehat{C^*(E)} = \{ \pi_{ [\alpha] } : [ \alpha ] \in \widetilde{\partial} E \}$.

\item[] \hspace{-.4in}\textsc{Case II:}  $E$ has no cycles and both $\widehat{ C^*(E)}$ and $\widetilde{\partial} E$ are uncountably infinite.

\item[] \hspace{-.4in}\textsc{Case III:}  $E$ has at least one cycle and $\widehat{ C^*(E)}$ is uncountably infinite.  (Moreover, in this third case there are examples showing $\widetilde{\partial} E$ may be finite, countably infinite, or uncountable.)
\end{itemize}

\end{theorem}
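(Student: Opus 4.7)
The plan is to obtain the trichotomy as an organizational repackaging of the earlier results, taking as the primary dichotomy whether or not $E$ contains a cycle. First I would verify that the three cases are mutually exclusive: Cases~I and~II are distinguished by the cardinality of $\widetilde{\partial} E$ (countable versus uncountable), and both require $E$ to have no cycles, whereas Case~III requires $E$ to contain at least one cycle.

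For exhaustiveness, I would branch on whether $E$ has a cycle. If $E$ has no cycles, I further split according to whether $\widetilde{\partial} E$ is countable. In the countable sub-case, condition~(2) of Theorem~\ref{countable-infinite-irrep-st-equiv-comp-thm} holds, so condition~(1) of that theorem yields that $\widehat{C^*(E)}$ is countable, and the concluding assertions of the theorem give $|\widehat{C^*(E)}| = |\widetilde{\partial} E|$ together with the explicit description $\widehat{C^*(E)} = \{ \pi_{[\alpha]} : [\alpha] \in \widetilde{\partial} E \}$, placing us in Case~I. In the uncountable sub-case, Corollary~\ref{number-STE-algebraic-unitary-equiv-classes-cor}(1) yields the inequality $|\widetilde{\partial} E| \leq |\widehat{C^*(E)}|$, so since $\widetilde{\partial} E$ is uncountable, so is $\widehat{C^*(E)}$, placing us in Case~II.

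If instead $E$ has at least one cycle, Lemma~\ref{C-star-countable-no-cycles-lem} (in its contrapositive form) gives that $\widehat{C^*(E)}$ is uncountable, placing us in Case~III. The parenthetical assertion that $\widetilde{\partial} E$ may be finite, countably infinite, or uncountable is already witnessed by the three graphs $E_1, E_2, E_3$ of Example~\ref{no-relation-ste-and-spectrum-ex}, which exhibit each of the three cardinalities; the verification for $E_2$ requires noting that removing either loop produces boundary paths representing distinct shift-tail classes indexed by how many times the loop is traversed initially, and the uncountable count for $E_3$ is delivered by Lemma~\ref{two-distinct-cycles-implies-uncountable}.

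I do not expect any genuine obstacle: this theorem is an organizational summary distilling Theorem~\ref{countable-infinite-irrep-st-equiv-comp-thm}, Lemma~\ref{C-star-countable-no-cycles-lem}, and Corollary~\ref{number-STE-algebraic-unitary-equiv-classes-cor}(1) into a single clean statement. The only mild point of care is making sure the three cases as written are literally exhaustive and mutually exclusive, which reduces to the trivial observation that a graph either has no cycles and countably many shift-tail classes of boundary paths, or no cycles and uncountably many such classes, or at least one cycle.
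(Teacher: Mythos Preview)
Your proposal is correct and follows essentially the same route as the paper: branch on whether $E$ has a cycle, invoke Theorem~\ref{countable-infinite-irrep-st-equiv-comp-thm} for Case~I, Corollary~\ref{number-STE-algebraic-unitary-equiv-classes-cor}(1) (or equivalently the $(1)\Leftrightarrow(2)$ direction of Theorem~\ref{countable-infinite-irrep-st-equiv-comp-thm}) for Case~II, Lemma~\ref{C-star-countable-no-cycles-lem} for Case~III, and Example~\ref{no-relation-ste-and-spectrum-ex} for the parenthetical. Your aside about $E_2$ is garbled---the countably many shift-tail classes there come simply from the countably many distinct loop vertices (plus the infinite emitter), not from ``how many times the loop is traversed initially''---but this does not affect the argument.
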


\begin{proof}
If $E$ has no cycles, either $\widehat{ C^*(E)}$ and $\widetilde{\partial} E$ are both uncountably infinite (and we are in Case~II) or at least one of $\widehat{ C^*(E)}$ and $\widetilde{\partial} E$ is countable, and Theorem~\ref{elementary-comp-series-gives-spectrum-lem} shows that Case I holds.

If $E$ has at least one cycle, then Lemma~\ref{C-star-countable-no-cycles-lem} implies $\widehat{ C^*(E)}$ is uncountable.  Furthermore, the graphs of Example~\ref{no-relation-ste-and-spectrum-ex} show that in this case it is possible for $\widetilde{\partial} E$ to be finite, countably infinite, or uncountable.  Thus Case~III holds.
\end{proof}

\end{document}